\documentclass{amsart}
\usepackage{a4}
\usepackage{amssymb}
\usepackage{amsmath}
\usepackage{amsthm}
\usepackage{enumerate}
\usepackage{mathrsfs}  

\usepackage{stmaryrd}

\def\squareforqed{\hbox{\rlap{$\sqcap$}$\sqcup$}}
\def\qed{\ifmmode\squareforqed\else{\unskip\nobreak\hfil
\penalty50\hskip1em\null\nobreak\hfil\squareforqed
\parfillskip=0pt\finalhyphendemerits=0\endgraf}\fi\medskip}

\def\ovA{\mathord{\mbox{\hspace*{0pt}}\,\overline{\!A\!\!\;}\!\:}\mbox{\hspace*{0pt}}}
\def\ovB{\mathord{\mbox{\hspace*{0pt}}\,\overline{\!B\!\!\;}\!\:}\mbox{\hspace*{0pt}}}
\def\ovC{\mathord{\mbox{\hspace*{0pt}}\,\overline{\!C\!\!\;}\!\:}\mbox{\hspace*{0pt}}}
\def\ovD{\mathord{\mbox{\hspace*{0pt}}\,\overline{\!D\!\!\;}\!\:}\mbox{\hspace*{0pt}}}
\def\ovE{\mathord{\mbox{\hspace*{0pt}}\,\overline{\!E\!\!\;}\!\:}\mbox{\hspace*{0pt}}}
\def\ovF{\mathord{\mbox{\hspace*{0pt}}\,\overline{\!F\!\!\;}\!\:}\mbox{\hspace*{0pt}}}
\def\ovM{\mathord{\mbox{\hspace*{0pt}}\,\overline{\!M\!\!\;}\!\:}\mbox{\hspace*{0pt}}}

\def\ovV{\mathord{\mbox{\hspace*{0pt}}\,\overline{\!V\!\!\;}\!\:}\mbox{\hspace*{0pt}}}

\def\ovt{\mathord{\mbox{\hspace*{0pt}}\:\!\overline{\!\!\;t}}\mbox{\hspace*{0pt}}}
\def\ovu{\mathord{\mbox{\hspace*{0pt}}\,\overline{\!u\!\!\;}\!\:}\mbox{\hspace*{0pt}}}
\def\ovv{\mathord{\mbox{\hspace*{0pt}}\,\overline{\!v\!\!\;}\!\:}\mbox{\hspace*{0pt}}}

\def\ovx{\mathord{\mbox{\hspace*{0pt}}\,\overline{\!x\!\!\;}\!\:}\mbox{\hspace*{0pt}}}
\def\ovy{\mathord{\mbox{\hspace*{0pt}}\,\overline{\!y\!\!\;}\!\:}\mbox{\hspace*{0pt}}}
\def\ovz{\mathord{\mbox{\hspace*{0pt}}\,\overline{\!z\!\!\;}\!\:}\mbox{\hspace*{0pt}}}

\def\ombb{\mathord{\mbox{\hspace*{0pt}}\;\!\overline{\!\!\:\omega\!\!\:}\;\!}\mbox{\hspace*{0pt}}}

\def\cn{\mathord{{\;\!{:}\;\!}}}

\def\NN{\mathord{\mathrm{N}}}
\def\II{\mathord{\mathrm{I}}}

\def\OO{\mathord{\mathbb{O}}}

\newcommand{\Fq}{\mathbb{F}_q}
\newcommand{\Fqs}{\mathbb{F}_{q^2}}

\newcommand{\omg}{\overline{\omega}}

\newcommand{\diag}{\mathrm{diag}}
\newcommand{\PSL}{\mathrm{PSL}}
\newcommand{\GL}{\mathrm{GL}}
\newcommand{\SL}{\mathrm{SL}}
\newcommand{\GO}{\mathrm{GO}}

\newcommand{\SO}{\mathrm{SO}}
\newcommand{\SU}{\mathrm{SU}}

\newcommand{\SE}{\mathrm{SE}}
\newcommand{\E}{\mathrm{E}}

\newcommand{\Spin}{\mathrm{Spin}}
\newcommand{\Tr}{\operatorname{T}}

\newcommand{\J}{\mathbb{J}}
\newcommand{\rad}{\mathrm{rad}}
\renewcommand{\bar}{\overline}
\renewcommand{\hat}{\widehat}

\newcommand{\B}{\mathrm{B}}
\renewcommand{\H}{\mathrm{H}}
\newcommand{\s}{\sigma}
\renewcommand{\ker}{\operatorname{ker}}
\newcommand{\T}{\top}

\newcommand{\fdet}{\Delta}

\newcommand{\inner}[2]{\langle #1, #2 \rangle}

\makeatletter
\renewcommand*\env@matrix[1][c]{\hskip -\arraycolsep
  \let\@ifnextchar\new@ifnextchar
  \array{*\c@MaxMatrixCols #1}}
\makeatother

\usepackage{marginnote}
\usepackage{marginfix}

\theoremstyle{plain}
\newtheorem{theorem}{Theorem}
\numberwithin{theorem}{section}
\newtheorem{lemma}[theorem]{Lemma}

\newtheorem{proposition}[theorem]{Proposition}

\theoremstyle{definition}
\newtheorem{definition}[theorem]{Definition}

\title
[Octonions, Albert vectors and ${}^2\E_6(F)$]
{Octonions, Albert vectors and the group ${}^2\E_6(F)$}
\author{John~N.~Bray,\ Yegor~Stepanov,\ Robert~A.~Wilson}
\address{}
\email{johnnbray@hotmail.com, yegor@ystepanoff.net, r.a.wilson@qmul.ac.uk}

\begin{document}

\begin{abstract}
We extend the explicit octonionic approach to groups of type $\E_6$ to
the twisted groups ${}^2\SE_{6,K}(F)$ associated with a quadratic Galois
extension $K/F$. These groups act on the Albert space over $K$,
preserving the Dickson--Freudenthal determinant and a Hermitean form.
When the field norm $K^{\times}\to F^{\times}$ is surjective, we give
explicit generators and co\"ordinate proofs of the three-orbit theorem
on white points and of the stabilisers of representative white vectors.
Under this hypothesis we also prove simplicity of the central quotient.
Over finite fields we recover the classical orbit lengths and group orders from the
stabiliser orders and the total number of white points. Finally, without
assuming norm surjectivity, we show that the two isotropic white-point
orbits remain unchanged, while the non-isotropic orbits are parametrised
by $F^{\times}/\NN_{K/F}(K^{\times})$.
\end{abstract}

\maketitle

\tableofcontents

\section{Introduction}

In \cite{BSW1} we gave an explicit construction of the groups of type
$\E_6$ over arbitrary fields using split octonions. The group acts on
the $27$-dimensional Albert space, whose vectors are written as
$(a,b,c\mid A,B,C)$, with three scalar and three octonion co\"ordinates,
and preserves the Dickson--Freudenthal determinant. Suitable
$3\times3$ octonionic matrices encode linear transformations of this
space and permit direct calculations with group elements. The purpose
of the present paper is to extend this approach to the twisted groups
${}^2\SE_{6,K}(F)$ associated with a quadratic Galois extension $K/F$.
We retain the Albert space and its determinant, and require the
transformations to preserve an additional Hermitean form.

The description of exceptional groups by forms on their smallest
modules is well established. Aschbacher \cite{Asch1} studies the
$27$-dimensional module and its geometry, including the twisted group;
his treatment of spaces with a quadratic form and a Hermitean form in
\cite[Section~10]{Asch1} is used here. The relation between octonions,
Albert algebras and exceptional groups is developed more generally in
\cite{SprVeld}. In the finite case, Wilson
\cite[Sections~9--10]{WilsonAlbert} describes constructions of
${}^2\SE_6(q)$ in the Albert space and gives the three white-point orbit
lengths and the group order. The orbit lengths are stated in
\cite[Section~10]{WilsonAlbert}, but the corresponding calculations
are left implicit. One purpose of the present paper is to supply these
calculations. We determine the orbits and their stabilisers explicitly,
and then derive the orbit lengths and group order from the total
number of white points. Thus the formulae themselves are already
known; the contribution here is a detailed proof in octonion
co\"ordinates. Simplicity is likewise a classical result, for which we
give a proof within the present construction.

The treatment also extends to quadratic Galois extensions under the
hypotheses stated below, and makes explicit the role of the field norm
in the orbit classification.
We use the determinant-preserving actions and white-point geometry
from \cite{BSW1}, rather than reconstructing the split group. The
additional Hermitean form requires different families of elementary
transformations and a separate analysis of the stabilisers. In
particular, the single orbit of the split group on white points breaks
into several orbits for the twisted group. We obtain representatives
and stabilisers by calculations in the Albert space, using standard
facts about classical groups. The calculations describe the normal
subgroups and their complements explicitly, rather than only giving
the abstract stabiliser types. No restriction on the characteristic
is imposed.

Throughout the paper, $K/F$ is a quadratic Galois extension and $\sigma$ is its
non-trivial automorphism. Except in Section~\ref{section:conclusions}, we impose
the additional standing hypothesis that the
field norm is surjective:
\begin{equation}
\label{eq:1_norm_surjective}
\tag{N}
\NN_{K/F}(K^{\times})=F^{\times},\qquad
\NN_{K/F}(t)=tt^{\s}.
\end{equation}
Thus every non-zero element of $F$ has the form $tt^{\s}$ for some
$t\in K^{\times}$. This holds, in particular, when $F$ is finite. We make the
uses of this hypothesis explicit in the non-isotropic orbit arguments.

The hypothesis is important because replacing an Albert vector $X$ by $tX$
multiplies its Hermitean value by $tt^{\s}$:
\[
\H(tX)=tt^{\s}\H(X).
\]
Consequently, if $\H(X)\neq0$, its class in
$F^{\times}/\NN_{K/F}(K^{\times})$ is an invariant of the orbit of the point
$\langle X\rangle$. Hypothesis~\eqref{eq:1_norm_surjective} removes this
obstruction by allowing every non-zero Hermitean value to be normalised to $1$.
Extensions for which the norm map is not surjective are treated in
Section~\ref{section:conclusions};
the three-orbit assertion below is made under hypothesis~\eqref{eq:1_norm_surjective}.

Under this hypothesis, Theorem~\ref{theorem:6_three_orbits} gives two
orbits of isotropic white points and one orbit of non-isotropic white
points. Here a point means a $1$-dimensional $K$-subspace, and isotropy
refers to the Hermitean form. The two isotropic types are distinguished
by the radical of this form on the $17$-space attached to a white
point. The stabilisers of chosen vector representatives of the three
types have shapes
\[
    (F^8.K^8)\cn\Spin_{8,K}^{-}(F),\qquad
    (F.K^{10})\cn\SU_{5,K}(F),\qquad
    \Spin_{10,K}^{-}(F),
\]
respectively; see Theorems~\ref{theorem:7_type1_stabiliser},
\ref{theorem:7_type2_stabiliser} and
\ref{theorem:7_type3_stabiliser}. Powers of $F$ and $K$ here denote
additive groups. The extensions denoted by dots in the first two
stabilisers are non-split, whereas the displayed outer extensions are
semidirect products. These are vector stabilisers; the extra scalar
multipliers needed for point stabilisers are determined separately
in the finite case.

The same co\"ordinate description gives a proof of simplicity of
${}^2\E_{6,K}(F)$ by Iwasawa's lemma. When $F=\Fq$ and $K=\Fqs$,
we combine the three point-stabiliser orders with the total number
of white points from \cite{BSW1}. The orbit--stabiliser formula then
determines all three orbit lengths and the order of ${}^2\SE_6(q)$
simultaneously. Factoring out the scalar centre gives the usual order
of ${}^2\E_6(q)$. Thus these familiar formulae are consequences of the
explicit stabiliser calculations, not inputs to them.

For arbitrary quadratic Galois extensions, the orbit statement has
a further field-dependent part. In
Theorem~\ref{theorem:conclusions_norm_orbits} we prove that there are
still exactly two isotropic white-point orbits, and that the
non-isotropic orbits correspond precisely to the elements of
$F^{\times}/\NN_{K/F}(K^{\times})$. This gives the exact condition
under which the finite-field three-orbit statement remains valid.
The stabiliser and simplicity theorems in the main body
retain hypothesis~\eqref{eq:1_norm_surjective}.

The paper is organised as follows. Sections~2 and~3 recall the
octonionic model and the required facts about spaces with two forms.
Sections~4 and~5 define the Hermitean form and construct the group
elements used subsequently. Sections~6 and~7 determine the white-point
orbits and the vector stabilisers. Section~\ref{section:simplicity}
proves simplicity, Section~9 treats finite fields, and
Section~\ref{section:conclusions} discusses the orbit classification
when~\eqref{eq:1_norm_surjective} is omitted.

\section{Split octonions and the Albert space $\J$}

We briefly recall the definitions from \cite{BSW1}. The split octonion algebra $\OO = \OO_F$
over a field $F$ is an $8$-dimensional vector space with basis $\{e_i \mid i \in \pm I\}$,
where \mbox{$\pm I = \{\pm 0, \pm 1, \pm \omega, \pm \ombb\}$}.
The multiplication is determined by the rules
\begin{enumerate}[(i)]
	\item $e_1 e_{\omega} = -e_{\omega} e_1 = e_{-\ombb}$,
	\item $e_1 e_0 = e_{-0} e_1 = e_1$,
	\item $e_{-1} e_1 = -e_0$ and $e_0 e_0 = e_0$,
\end{enumerate}
together with their images under negating all subscripts and multiplying all subscripts
by $\omega$ (where $\omega^2 = \ombb$ and $\omega \ombb = 1$). All other products of
basis vectors are $0$.
The elements $e_0$ and $e_{-0}$ are orthogonal idempotents with $e_0 + e_{-0} = 1_{\OO}$.
For $x = \sum_{i \in \pm I} \lambda_i e_i$, the trace and norm are
\begin{equation}
	\Tr(x) = \lambda_0 + \lambda_{-0}, \quad
	\NN(x) = \lambda_{-1} \lambda_1 + \lambda_{\omg} \lambda_{-\omg} +
		\lambda_{\omega}\lambda_{-\omega} + \lambda_0 \lambda_{-0},
\end{equation}
and conjugation is defined by $e_i \mapsto -e_i$ for $i \neq \pm 0$ and $e_0 \leftrightarrow e_{-0}$.
We say that an $F$-subalgebra $S$ of $\OO$ is \textit{sociable} if $S$ contains $F \cdot 1_{\OO}$ and
for all $x,y \in S$ and for all $z \in \OO$ we have $(xy)z = x(yz)$.

The Albert space $\J = \J_F$ is the $27$-dimensional vector space of elements
\begin{equation}
	X = (a,b,c\mid A,B,C) = \begin{bmatrix}
		a & C & \ovB \\
		\ovC & b & A \\
		B & \ovA & c
	\end{bmatrix},
\end{equation}
where $a,b,c \in F$ and $A,B,C \in \OO$. The Dickson--Freudenthal determinant is the cubic form
\begin{equation}
	\fdet(X) = abc - aA\ovA - bB\ovB - cC\ovC + \Tr(ABC),
\end{equation}
and we define the group $\SE_6(F)$ to be the group of all $F$-linear maps on $\J$
preserving $\fdet$.  We use exponent notation for right actions:
$X^g$ denotes the image of $X$ under $g$, so that
\begin{equation*}
	 (X^g)^h=X^{gh}.
\end{equation*}
For a subspace $U$, we similarly write
$U^g=\{X^g\mid X\in U\}$.  As in
\cite{BSW1}, we encode some elements of $\SE_6(F)$ by $3\times 3$
matrices $M$ written over sociable subalgebras of $\OO$.  For such a
matrix,
\begin{equation*}
	 X^M=\ovM^{\T} X M.
\end{equation*}
This is well-defined since every entry of $\ovM^{\T} X M$ is a sum of
terms $m_1 x m_2$ with $m_1,m_2$ in the same sociable subalgebra.

Suppose $X = (a,b,c \mid A,B,C)$ and $Y = (d,e,f\mid D,E,F)$. The mixed form $M(Y,X)$ is defined as
\begin{multline}
	M(Y,X) = bcd + ace + abf - d A\ovA  -e B \ovB - f C\ovC \\
	- a(D\ovA + A\ovD) - b(E\ovB + B\ovE) - c(F\ovC + C\ovF) + \Tr(DBC + ECA + FAB).
\end{multline}
We colour the non-zero Albert vectors in $\J$ according to the following rules.

\begin{definition}
	A non-zero Albert vector $X \in \J$ is called
	\begin{enumerate}[(i)]
		\item \textit{white} if $M(Y,X) = 0$ for all $Y \in \J$;
		\item \textit{grey} if $\fdet(X) = 0$ and there exists $Y \in \J$ such that
			$M(Y,X) \neq 0$;
		\item \textit{black} if $\fdet(X) \neq 0$.
	\end{enumerate}
	A \textit{white}, \textit{grey}, or \textit{black point} is a $1$-dimensional subspace
	of $\J$ spanned by a white, grey, or black vector, respectively.
\end{definition}

The following criterion for whiteness will be of particular importance.

\begin{lemma}
	\label{lemma:2_whiteness}
	An Albert vector $X = (a,b,c\mid A,B,C)$ is white if and only if the following
	conditions hold:
	\begin{equation}
		\label{eq:2_whiteness}
		\left.\begin{array}{r@{\;}c@{\;}l}
			A \ovA & = & bc, \\
			B \ovB & = & ca, \\
			C \ovC & = & ab, \\
			AB & = & c\ovC, \\
			BC & = & a\ovA, \\
			CA & = & b\ovB.
		\end{array}
		\right\}.
	\end{equation}
	If $X$ is white, then $\fdet(X) = 0$.
\end{lemma}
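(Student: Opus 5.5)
The plan is to compute the coefficients of the linear form $Y\mapsto M(Y,X)$ directly. By definition, $M(Y,X)$ is linear in $Y=(d,e,f\mid D,E,F)$. Whiteness of $X$ therefore means that the coefficient of $d$, of $e$, of $f$, and the octonion "coefficients" of $D$, $E$, $F$ all vanish. The coefficient of $d$ is $bc-A\ovA$, and similarly those of $e$ and $f$ are $ca-B\ovB$ and $ab-C\ovC$. This gives the first three equations of \eqref{eq:2_whiteness}.

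For the $D$-terms we have $-a(D\ovA+A\ovD)+\Tr(DBC)$. Using the polarised norm $\Tr(x\ovy)=x\ovy+y\ovx$ (valid in any composition algebra), this equals
\[
\Tr\bigl(D(BC-a\ovA)\bigr).
\]
Here we use $\Tr(D\ovA)=\Tr(A\ovD)$, so that $D\ovA+A\ovD=\Tr(D\ovA)$, together with the fact that $\Tr(DBC)=\Tr(D(BC))$ is well defined because the trace of a triple product is associative. The bilinear form $(D,Z)\mapsto\Tr(DZ)$ is nondegenerate on $\OO$: it is the polar of the norm composed with conjugation, and one checks this on the basis, since $\Tr(e_ie_{-i})\ne0$ pairs the basis vectors. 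Hence this term vanishes for all $D$ if and only if $BC=a\ovA$.

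By the cyclic symmetry $\Tr(ECA)$ and $\Tr(FAB)$, the $E$- and $F$-terms give $CA=b\ovB$ and $AB=c\ovC$. The only care needed is to track the order of the product inside the trace and the placement of bars, using $\Tr(xyz)=\Tr(yzx)$ to move $E$ (respectively $F$) to the front. This proves the equivalence.

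For the final assertion, I would use Euler's identity for the cubic form, $M(X,X)=3\fdet(X)$, which is visible from the formulas: each monomial of $\fdet$ appears three times. Thus whiteness gives $3\fdet(X)=0$. This settles the claim unless the characteristic is $3$, and that case is the main obstacle. To avoid dividing by $3$, I would instead compute $\fdet(X)$ directly from \eqref{eq:2_whiteness}. Substituting $A\ovA=bc$ and the analogous equalities gives
\[
\fdet(X)=abc-3abc+\Tr(ABC)=-2abc+\Tr(ABC).
\]
Then, using $AB=c\ovC$, we get $\Tr(ABC)=\Tr(c\ovC C)=c\,\Tr(C\ovC)=2c\,C\ovC=2abc$, so $\fdet(X)=0$ in every characteristic.
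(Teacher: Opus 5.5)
Your proof is correct. There is no in-paper proof to compare it with: the paper recalls this lemma from \cite{BSW1} without proof.

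Your argument is the natural direct one. You write
\[
M(Y,X)=d(bc-A\ovA)+e(ca-B\ovB)+f(ab-C\ovC)+\Tr\bigl(D(BC-a\ovA)\bigr)+\Tr\bigl(E(CA-b\ovB)\bigr)+\Tr\bigl(F(AB-c\ovC)\bigr),
\]
read off the six conditions, and then evaluate $\fdet(X)=-2abc+\Tr((AB)C)=-2abc+2abc=0$. This works in every characteristic, so the Euler-identity detour is unnecessary.

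There is one small slip in your justification that $(D,Z)\mapsto\Tr(DZ)$ is non-degenerate. It is not true that $e_i$ pairs with $e_{-i}$ for every $i$: one has $\Tr(e_0e_{-0})=0$, while $\Tr(e_0e_0)=\Tr(e_{-0}e_{-0})=1$. The Gram matrix is still monomial, so the conclusion stands. It also follows from your correct remark that $\Tr(DZ)$ is the polar form of $\NN$ evaluated at $(D,\overline{Z})$.

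A minor point: the terms $\Tr(ECA)$ and $\Tr(FAB)$ already have $E$ and $F$ in front, so no cyclic rearrangement is needed for them.
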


The group $\SE_6(F)$ acts transitively on white points and preserves the colour of
Albert vectors.

\section{Spaces with two forms}

In this section we describe the results from \cite[Section 10]{Asch1}, which will be of great importance to us in
the further discussion.

Let $K/F$ and $\s$ be as in the Introduction, with the norm-surjectivity
hypothesis~\eqref{eq:1_norm_surjective}. In particular, $-1$ is a field norm,
as required in \cite[Section~10]{Asch1}.
Let also $V$ be a $2m$-dimensional vector space over $K$ with a quadratic form $Q$ and a conjugate-symmetric
sesquilinear form \mbox{$B : V\times V \rightarrow K$}, defined with respect to $\s$. Denote by $f$ the polar
form of $Q$, and suppose that  \mbox{$\mathcal{B}$} is a basis of $V$ with respect to $f$, consisting of the
elements $e_1, \ldots, e_m$, $f_1,\ldots,f_m$, so that
\begin{equation}
	Q(e_i) = Q(f_i) = 0,\ f(e_i, f_i) = 1,
\end{equation}
and $f(e_i,e_j) = f(f_i,f_j) = 0 = f(e_i,f_j)$ for $i \neq j$. The above said means that $(V,Q)$ is a hyperbolic
orthogonal space. Denote by $G$ the maximal amongst all the subgroups of $\GO(V, Q)$ which preserve $B$. If $U$
is a subspace of $V$, then we denote the restrictions of $f$ and $B$ on $U$ as $f_U$ and $B_U$ respectively. We 
say that an element $v \in V$ is \textit{singular isotropic}, if $Q(v) = B(v,v) = 0$. 

\begin{definition}
	\label{def:qb_subspace}
	A $(Q,B)$-subspace of $V$ is an $F$-subspace $U$ of $V$ such that the following holds: \mbox{$V = U \otimes_F K$}, 
	$f_U = B_U$ is an $F$-form on $U$, and $Q_U$ is a non-degenerate quadratic form on $U$ of 
	Witt index at least $2$. 
\end{definition}

\begin{proposition}[(10.1) in \cite{Asch1}]
	\label{prop:3_2forms}
	If $U$ is a $(Q,B)$-subspace of $V$, then it is the unique $(Q,B)$-subspace of $V$, and
	$G = \GO(U,Q)$.
\end{proposition}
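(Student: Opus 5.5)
The idea is to encode $U$ by a $\s$-semilinear involution of $V$ determined by the two forms; uniqueness of $U$ and $G=\GO(U,Q)$ then follow from the uniqueness of that involution. Since $f$ is non-degenerate, for each $v\in V$ there is a unique $\tau(v)\in V$ with
\[
B(w,v)=f(w,\tau(v))\quad\text{for all } w\in V .
\]
Because $B$ is $K$-linear in $w$ and $\s$-semilinear in $v$, the map $\tau$ is $\s$-semilinear. Now suppose $U$ is a $(Q,B)$-subspace, so $V=U\otimes_F K$, and let $\bar{\ }$ denote the induced $\s$-semilinear involution $u\otimes t\mapsto u\otimes t^\s$, whose fixed points are exactly $U$. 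The condition $f_U=B_U$, with $f$ bilinear and $B$ sesquilinear, gives
\[
B(w,v)=f(w,\bar v)
\]
on $U\times U$, and this extends to $V\times V$ by linearity in $w$ and semilinearity in $v$. Hence $\tau=\bar{\ }$, so
\[
U=\{v\in V : \tau(v)=v\}.
\]
Since $\tau$ is defined from $f$ and $B$ alone, any two $(Q,B)$-subspaces have the same fixed-point set, and so they coincide.

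Next, $G$ preserves $Q$, $f$ and $B$, so it commutes with $\tau$: from $B(wg,vg)=B(w,v)=f(w,\tau v)=f(wg,(\tau v)g)$ we get $\tau(vg)=(\tau v)g$. Therefore $G$ stabilises $U$ and restricts to an element of $\GO(U,Q_U)$. Conversely, any $h\in\GO(U,Q)$ extends $K$-linearly to $V$. This extension preserves $Q$ (a quadratic form is determined by its values on a spanning set together with its polar form) and commutes with the conjugation. It then preserves $B(w,v)=f(w,\bar v)$. The restriction map is injective because $U$ spans $V$ over $K$, so $G\cong\GO(U,Q)$.

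I expect the main obstacle to be checking that $G$ really is the full $B$-preserving subgroup of $\GO(V,Q)$, and that no scalar ambiguity enters. In particular $G$ might a priori preserve $B$ only up to a scalar; here I take "preserve" to mean exact preservation, as in the definition. The Witt-index hypothesis and the condition that $-1$ is a norm are not needed for the argument above. They are needed only for existence, and for identifying the commutator subgroups as in \cite[Section~10]{Asch1}. If the intended meaning of $G$ were instead the stabiliser of $B$ up to scalars, I would add a step showing that such a scalar $\lambda$ satisfies $\tau g=\lambda\, g\tau$. Applying $\tau^2=1$ would give $\lambda\lambda^\s=1$, and Hilbert 90 would then let me absorb $\lambda$ into $g$.
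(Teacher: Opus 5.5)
Your proof is correct and is essentially the paper's argument. Your fixed-point set $\{v\in V:\tau(v)=v\}$ is exactly the paper's $\{v\in V: B(w,v)=f(w,v)\text{ for all } w\in V\}$. The paper shows this set lies in $U$ by an explicit $u_1+\lambda u_2$ computation, where you instead note that $\tau$ is the conjugation of $U\otimes_F K$, and both halves of $G=\GO(U,Q)$ are argued the same way.

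Your closing paragraph on scalars is unnecessary, since $G$ is defined to preserve $B$ exactly. The proposed repair would also not work as written: multiplying $g$ by $\nu\in K^{\times}$ multiplies $Q$ by $\nu^{2}$, so the rescaled map need not lie in $\GO(V,Q)$.
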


\begin{proof}
	The subspace is determined by the two forms through
	\[
		U=\{v\in V:B(w,v)=f(w,v)\text{ for every }w\in V\}.
	\]
	Indeed, the equality holds for $v\in U$, first for $w\in U$ and then,
	by $K$-linearity in $w$, for all $w\in V$. Conversely, fix
	$\lambda\in K\setminus F$ and write $v=u_1+\lambda u_2$ with $u_1,u_2\in U$.
	For $w\in U$ the equality gives
	$(\lambda^{\s}-\lambda)f(w,u_2)=0$, so non-degeneracy forces $u_2=0$.
	This also proves uniqueness.

	Every element of $G$ therefore preserves $U$ and restricts to an isometry
	of $Q_U$. Conversely, an isometry of $Q_U$ extends $K$-linearly to $V$
	preserving $Q$ and $B$, since $B_U=f_U$. Hence $G=\GO(U,Q)$.
\end{proof}

\begin{proposition}[(10.5) in \cite{Asch1}]
	\label{prop:3_2forms_orbits}
	Let $U$ be a $(Q,B)$-subspace of $V$ of Witt index at least $2$. Fix
	\mbox{$\lambda \in K \setminus F$}. The group $G$ has
	precisely two orbits on singular isotropic points with representatives
	$\langle u \rangle$ and \mbox{$\langle u + \lambda v \rangle$}, where $u,v \in U$ and
	\mbox{$\langle u, v \rangle$} is a singular line.
\end{proposition}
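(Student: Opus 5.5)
By Proposition~\ref{prop:3_2forms}, $G=\GO(U,Q)$, acting on $V=U\otimes_F K$ by $K$-linear extension. The plan is to classify singular isotropic vectors by the $F$-span of their ``real'' and ``imaginary'' parts in $U$, and then apply Witt's theorem for $(U,Q_U)$ over $F$.

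Since $K=F\oplus F\lambda$, every $w\in V$ can be written uniquely as $w=u+\lambda v$ with $u,v\in U$. Expanding, and using $B_U=f_U$, the $K$-bilinearity of $f$ and the $\s$-sesquilinearity of $B$, gives
\begin{align*}
	Q(w)&=Q(u)+\lambda f(u,v)+\lambda^2Q(v),\\
	B(w,w)&=f(u,u)+(\lambda+\lambda^{\s})f(u,v)+\lambda\lambda^{\s}f(v,v).
\end{align*}
If the characteristic is not $2$, then $f(x,x)=2Q(x)$. Since $1,\lambda,\lambda^2$ span only a $2$-dimensional $F$-space, I would rewrite $\lambda^2=\alpha+\beta\lambda$ with $\alpha,\beta\in F$ and take $F$-components. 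The condition $Q(w)=0$ then gives two $F$-linear equations in $Q(u),f(u,v),Q(v)$, and $B(w,w)=0$ gives a third. I would check that the resulting $3\times3$ system has nonzero determinant, using $\lambda\ne\lambda^{\s}$. It would then follow that $Q(u)=Q(v)=f(u,v)=0$, so that $\langle u,v\rangle_F$ is totally singular. In characteristic $2$ I would reconsider the step $f(x,x)=2Q(x)$, since then $f(x,x)=0$ automatically. I expect this step to be the main obstacle, because the hypothesis on $B$ must then supply more than the diagonal terms. The thing to verify is that $B_U=f_U$, read as the hypothesis in Definition~\ref{def:qb_subspace}, still yields $Q(u)=Q(v)=0$ together with $f(u,v)=0$. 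If the $F$-component count is insufficient there, I would fall back on Aschbacher's argument in (10.5). In either case the conclusion should be that $w$ is singular isotropic if and only if $\langle u,v\rangle_F$ is a totally singular subspace of $U$ of dimension at most $2$.

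Next I would show that the invariant distinguishing the orbits is the $F$-dimension $d\in\{1,2\}$ of $W(\langle w\rangle)$. Here $W(\langle w\rangle)$ denotes the smallest $F$-subspace $W$ of $U$ with $w\in W\otimes K$. Concretely, $W(\langle w\rangle)=\langle u,v\rangle_F$. This subspace is unchanged if $w$ is replaced by $tw$ with $t\in K^{\times}$. It is also $G$-equivariant, because $G$ preserves $U$. Hence $d$ is constant on $G$-orbits. The point $\langle u\rangle$ has $d=1$, and $\langle u+\lambda v\rangle$ with $u,v$ independent has $d=2$. So there are at least two orbits.

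Transitivity on each class comes next. When $d=1$ we have $w=tu$ with $u\in U$ singular. Witt's theorem for the non-degenerate space $(U,Q_U)$, which has Witt index at least $2$, lets $\GO(U,Q)$ carry any nonzero singular vector to any other. Hence all such points form one orbit. When $d=2$, write $w=u'+\lambda v'$ with $\langle u',v'\rangle$ a totally singular line. Witt's theorem extends the isometry $u'\mapsto u$, $v'\mapsto v$ between totally singular $2$-spaces to an element of $\GO(U,Q)$; this uses Witt index at least $2$. That element sends $w$ to $u+\lambda v$ exactly, so the $d=2$ points also form a single orbit. Together these steps give precisely two orbits, with the stated representatives.
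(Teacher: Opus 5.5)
Your argument follows the paper's proof. You write a representative as $u+\lambda v$ with $u,v\in U$, use $Q(w)=0$ and $B(w,w)=0$ to show that $\langle u,v\rangle_F$ is totally singular, and separate the two orbits by whether the point has a non-zero representative in $U$ (your invariant $d$ records exactly this). You then finish with Witt's theorem applied to $u'\mapsto u$, $v'\mapsto v$.

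The one step you leave open, characteristic $2$, is not an obstacle, and you do not need Aschbacher's argument there. The identity $f(x,x)=2Q(x)$ holds in every characteristic; in characteristic $2$ it simply says $f(x,x)=0$. So your three $F$-linear equations hold uniformly:
\[
Q(u)+\alpha Q(v)=0,\qquad f(u,v)+\beta Q(v)=0,\qquad 2Q(u)+\beta f(u,v)-2\alpha Q(v)=0,
\]
where $\lambda^2=\alpha+\beta\lambda$ with $\alpha=-\lambda\lambda^{\sigma}$ and $\beta=\lambda+\lambda^{\sigma}$.

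Their determinant in the unknowns $Q(u),f(u,v),Q(v)$ is $-4\alpha-\beta^2=-(\lambda-\lambda^{\sigma})^2$. This is non-zero in every characteristic because $\lambda\neq\lambda^{\sigma}$.

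In characteristic $2$ this is transparent. There $B(w,w)=\beta f(u,v)$ with $\beta=\lambda-\lambda^{\sigma}\neq0$, which forces $f(u,v)=0$. The second equation then gives $Q(v)=0$, and the first gives $Q(u)=0$.

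This is exactly the paper's computation $B(x,x)=-(\lambda-\lambda^{\sigma})^2Q(u_2)$ after eliminating $Q(u_1)$ and $f(u_1,u_2)$. With it in place, your proof is complete.
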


\begin{proof}
	The two displayed points are singular isotropic because
	$Q(u)=Q(v)=f(u,v)=0$ and $B=f$ on $U$.
	Since $G$ preserves $U$, points having a non-zero representative in $U$
	cannot be conjugate to those having none. Witt's theorem gives
	transitivity on the former.

	For a point of the latter kind, write a representative as
	$x=u_1+\lambda u_2$, where $u_1,u_2\in U$ are $F$-independent.
	Comparing coefficients in $Q(x)=0$ gives
	\[
		Q(u_1)=\lambda\lambda^{\s}Q(u_2),\qquad
		f(u_1,u_2)=-(\lambda+\lambda^{\s})Q(u_2).
	\]
	Consequently,
	\[
		0=B(x,x)=-(\lambda-\lambda^{\s})^2Q(u_2).
	\]
	Thus $Q(u_1)=Q(u_2)=f(u_1,u_2)=0$.
	The map $u_1\mapsto u$, $u_2\mapsto v$ is therefore an isometry between
	totally singular planes. By Witt's theorem it extends to an isometry
	of $U$, whose $K$-linear extension belongs to $G$ and sends $x$ to
	$u+\lambda v$.
\end{proof}

\begin{lemma}
	\label{lemma:3_2forms_nonisotropic}
	Assume that $U$ is a $(Q,B)$-subspace of $V$ and that the field norm
	$\NN_{K/F}:K^{\times}\rightarrow F^{\times}$ is surjective. Then $G$ is
	transitive on the points $\langle x\rangle$ with $Q(x)=0$ and $B(x,x)\neq0$.
\end{lemma}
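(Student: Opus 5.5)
I would follow the pattern of the proof of Proposition~\ref{prop:3_2forms_orbits}: decompose a representative along $V=U\otimes_F K$, turn the two conditions into conditions on a pair of vectors of $U$, and then apply Witt's theorem in $(U,Q_U)$. The new ingredient is hypothesis~\eqref{eq:1_norm_surjective}, which lets us normalise the representative of a point.

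Let $\langle x\rangle$ and $\langle y\rangle$ be two points with $Q(x)=Q(y)=0$, $B(x,x)\neq 0$ and $B(y,y)\neq0$. Since $B(tx,tx)=tt^{\s}B(x,x)$ and $B(x,x),B(y,y)\in F^{\times}$, surjectivity of the norm gives $t\in K^{\times}$ with $B(tx,tx)=B(y,y)$. Replacing $x$ by $tx$, it suffices to find $g\in G$ with $x^g=y$ whenever
\[
Q(x)=Q(y)=0,\qquad B(x,x)=B(y,y)\neq0.
\]

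Fix $\lambda\in K\setminus F$ and write $x=u_1+\lambda u_2$ with $u_1,u_2\in U$. We use $\lambda^2=(\lambda+\lambda^{\s})\lambda-\lambda\lambda^{\s}$. Comparing coefficients in $Q(x)=0$ gives, as before,
\[
Q(u_1)=\lambda\lambda^{\s}Q(u_2),\qquad f(u_1,u_2)=-(\lambda+\lambda^{\s})Q(u_2).
\]
Because $B=f$ on $U$, we have
\[
B(x,x)=2Q(u_1)+(\lambda+\lambda^{\s})f(u_1,u_2)+2\lambda\lambda^{\s}Q(u_2).
\]
Substituting the two relations gives $B(x,x)=-(\lambda-\lambda^{\s})^2Q(u_2)$. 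Hence $Q(u_2)=c$ is the same nonzero constant for $x$ and for $y$. The relations above then fix $Q(u_1)$ and $f(u_1,u_2)$ as well.

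Next, $u_1$ and $u_2$ are $F$-independent. If $u_1=au_2$ with $a\in F$, then $Q(x)=(a+\lambda)^2c\neq0$, a contradiction. So $W=\langle u_1,u_2\rangle_F$ is a plane whose Gram data (values of $Q$ on $u_1,u_2$ and of $f$ on the pair) is the same for $x$ and for $y=u_1'+\lambda u_2'$. Moreover $Q|_W$ is $c$ times the norm form of $K/F$ in suitable coordinates, so it is non-degenerate, including in characteristic~$2$. Indeed its discriminant is $f(u_1,u_2)^2-4Q(u_1)Q(u_2)=c^2(\lambda-\lambda^{\s})^2\neq0$.

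Therefore $u_1\mapsto u_1'$, $u_2\mapsto u_2'$ is an isometry between non-degenerate planes of $(U,Q_U)$. By Witt's extension theorem, which holds for non-degenerate quadratic forms in every characteristic, it extends to some $h\in\GO(U,Q)$. By Proposition~\ref{prop:3_2forms} the $K$-linear extension of $h$ lies in $G$, and it sends $x$ to $y$.

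The step I expect to need the most care is this final appeal to Witt's theorem. In characteristic~$2$ one must check that the relevant form on $U$ is non-degenerate in the sense required, and that the subspace being moved is genuinely a non-degenerate plane rather than a defective one. The discriminant computation above is designed to settle this, since it shows $Q|_W$ is anisotropic and equivalent to $c\,\NN_{K/F}$.
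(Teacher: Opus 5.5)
Your proof is correct and follows the paper's argument essentially step for step. You normalise via norm surjectivity and write $x=u_1+\lambda u_2$. You derive $B(x,x)=-(\lambda-\lambda^{\s})^2Q(u_2)$, so that the Gram data of $\langle u_1,u_2\rangle_F$ is determined, and then extend the isometry of non-degenerate planes by Witt's theorem and Proposition~\ref{prop:3_2forms}. Your explicit independence check and the observation that $Q$ restricted to this plane is $Q(u_2)\,\NN_{K/F}$ are a slightly more detailed form of the paper's computation that the Gram determinant is $-d^2Q(u_2)^2\neq0$.
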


\begin{proof}
	Let $x,y\in V$ satisfy $Q(x)=Q(y)=0$ and $B(x,x),B(y,y)\neq0$.
	Here we use the surjectivity of the field norm: choose $t\in K^{\times}$ with
	\[
		tt^{\s}=\frac{B(x,x)}{B(y,y)}.
	\]
	Replacing $y$ by $ty$ does not change its point, and gives $B(y,y)=B(x,x)$.

	Fix $\lambda\in K\setminus F$, put $d=\lambda-\lambda^{\s}\neq0$, and write
	\[
		x=u_1+\lambda u_2,\qquad y=w_1+\lambda w_2,
		\qquad u_1,u_2,w_1,w_2\in U.
	\]
	Using $\lambda^2=(\lambda+\lambda^{\s})\lambda-\lambda\lambda^{\s}$
	and comparing coefficients in $Q(x)=0$, we obtain
	\[
		Q(u_1)=\lambda\lambda^{\s}Q(u_2),\qquad
		f(u_1,u_2)=-(\lambda+\lambda^{\s})Q(u_2).
	\]
	Since $B=f$ on $U$, it follows that
	\[
		B(x,x)=2Q(u_1)+(\lambda+\lambda^{\s})f(u_1,u_2)
		       +2\lambda\lambda^{\s}Q(u_2)=-d^2Q(u_2).
	\]
	The same identities hold for $y$ and $w_1,w_2$. Hence $B(x,x)=B(y,y)$
	implies
	\[
		Q(u_1)=Q(w_1),\qquad Q(u_2)=Q(w_2),\qquad
		f(u_1,u_2)=f(w_1,w_2).
	\]
	Moreover, the determinant of the polar Gram matrix on $\langle u_1,u_2\rangle_F$
	is $-d^2Q(u_2)^2\neq0$, and likewise for $\langle w_1,w_2\rangle_F$.
	Thus the map $u_i\mapsto w_i$ is an isometry between these two
	non-degenerate planes. By Witt's theorem it extends to an isometry of $U$.
	Its $K$-linear extension preserves $Q$ and also $B$, since $B=f$ on $U$.
	It therefore belongs to $G$ and sends $x$ to $y$, proving the assertion.
\end{proof}

\section{Hermitean form in $\J$ and the group ${}^2\SE_{6,K}(F)$}

Retain the quadratic Galois extension $K/F$, its non-trivial automorphism
$\sigma$, and hypothesis~\eqref{eq:1_norm_surjective}. Let $\OO_F$ be
a split octonion algebra over $F$ and $\OO_K = \OO_F \otimes_F K$.

We slightly abuse the notation here denoting by $\s$ also the automorphism of $\OO_K$ induced by the field automorphism $\sigma$:
\begin{equation}
	\left( \sum_{i\in \pm I} \lambda_i e_i \right)^{\s} = 
	\sum_{i\in \pm I} \lambda_{i}^{\sigma} e_i. 
\end{equation}
This, however, should not create any difficulties for the reader. Consider the following Hermitean form defined on the elements
of $\OO_K$:
\begin{equation}
	h(x) = x \ovx^{\s} + x^{\s} \ovx = \Tr(x \ovx^{\s}).
\end{equation}
On the Albert space $\J = \J_K$ this induces the Hermitean form $\H$, where
\begin{equation}
	\H( (a,b,c\mid A,B,C) ) = 
		a a^{\s} + b b^{\s} + c c^{\s} + 
		\Tr(A \ovA^{\s} + B\ovB^{\s} + C\ovC^{\s}).
\end{equation}
We will use both the quadratic map $\H$ and the underlying $\s$-Hermitean sesquilinear form.
For \mbox{$X=(a,b,c\mid A,B,C)$} and \mbox{$Y=(d,e,f\mid D,E,F)$} define
\begin{equation}
	\label{eq:4_B_sesquilinear}
	\B_{\H}(X,Y)= a d^{\s}+b e^{\s}+c f^{\s}+
		\Tr\!\left(A\ovD^{\s}+B\ovE^{\s}+C\ovF^{\s}\right).
\end{equation}
Then $\B_{\H}$ is a conjugate-symmetric $\s$-sesquilinear form on $\J$, and $\B_{\H}(X,X)=\H(X)$ for all $X$.
The polarisation of the quadratic map $\H$ is the $F$-bilinear symmetric form
\begin{equation}
	\inner{X}{Y}_{\H}:=\H(X+Y)-\H(X)-\H(Y)=\B_{\H}(X,Y)+\B_{\H}(Y,X),
\end{equation}
which in general is \emph{not} $\s$-sesquilinear.

Using the construction from \cite{BSW1}, we obtain the group $\SE_6(K)$ as the group of $K$-linear maps on $\J_K$ 
preserving the Dickson--Freudenthal determinant. We define the group ${}^2\SE_{6,K}(F)$ as the subgroup of $\SE_6(K)$ 
which preserves $\H$. In case $F = \Fq$ and $K = \Fqs$, we denote this by ${}^2\SE_6(q)$. The group ${}^2\E_{6,K}(F)$ 
is defined as the quotient of ${}^2\SE_{6,K}(F)$ by its centre.

\section{Some elements of ${}^2\SE_{6,K}(F)$}

Let $X = (a,b,c\mid A,B,C)$ be an arbitrary element of $\J = \J_K$. Recall from \cite{BSW1} that the elements
\begin{equation}
	\delta = \begin{bmatrix}
		0 & 1 & 0 \\
		1 & 0 & 0 \\
		0 & 0 & 1
	\end{bmatrix},\quad
	\tau = \begin{bmatrix}
		0 & 1 & 0 \\
		0 & 0 & 1 \\
		1 & 0 & 0
	\end{bmatrix}
\end{equation}
encode elements of $\SE_6(K)$. We notice that $\delta$ and 
$\tau$ also preserve the Hermitean form $\H$, so they encode elements of ${}^2\SE_{6,K}(F)$.

The elements $P_u$ (see \cite[Section 6]{BSW1}) with $u$ written over the small field and such that $\NN(u) = 1$ are also of interest. 

\begin{lemma}
	\label{lemma:5_pu_hermitean}
	The actions on $\J$ of the elements $P_u = \diag(u,\ovu,1)$ such that $u \in \OO_F$ and $\NN(u) = 1$
	preserve $\H$.
\end{lemma}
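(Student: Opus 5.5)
We compute the action of $P_u$ explicitly and then check term by term that $\H$ is unchanged. With $M=\diag(u,\ovu,1)$ we have $\ovM^{\T}=\diag(\ovu,u,1)$. Hence $X^{P_u}=\ovM^{\T}XM$ sends $X=(a,b,c\mid A,B,C)$ to
\[
(\ovu a u,\; u b\ovu,\; c \mid uA,\; B\ovu,\; \ovu C\ovu),
\]
up to the exact bracketing and placement of conjugates fixed in \cite[Section~6]{BSW1}. The bracketing is harmless because $u$ and $\ovu$ lie in the sociable subalgebra $F[u]$. We first record these images.

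The diagonal entries are easy. Since $a\in K$ is a scalar and $\ovu u=u\ovu=\NN(u)=1$, we get $\ovu a u=a$ and $u b\ovu=b$. So the contribution $aa^{\s}+bb^{\s}+cc^{\s}$ is unchanged.

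For the octonion coordinates the key observation is that $u\in\OO_F$, so $u^{\s}=u$ and $\ovu^{\s}=\ovu$. Indeed $\s$ acts only on coefficients, and it commutes with octonion conjugation and multiplication, since the structure constants lie in $F$.

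We then need the fact that for any $x\in\OO_K$ and $v\in\OO_F$ with $\NN(v)=1$,
\[
h(vx)=h(xv)=h(x).
\]
To see this, write $h(x)=\Tr(x\ovx^{\s})=\B(x,x^{\s})$, where $\B$ is the $K$-bilinear polarisation of the octonion norm, so that $\Tr(x\ovy)=\B(x,y)$. Then
\[
h(vx)=\B\bigl(vx,(vx)^{\s}\bigr)=\B(vx,vx^{\s})=\NN(v)\B(x,x^{\s})=h(x).
\]
The middle steps use $(vx)^{\s}=v^{\s}x^{\s}=vx^{\s}$ and multiplicativity of the norm, $\NN(vy)=\NN(v)\NN(y)$. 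The latter polarises to $\B(vx,vy)=\NN(v)\B(x,y)$, and the same argument works for right multiplication.

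Applying this fact coordinate by coordinate, $h(uA)=h(A)$ and $h(B\ovu)=h(B)$, using $\NN(\ovu)=1$. For the third coordinate, $h(\ovu C\ovu)=h(C)$ follows by applying the fact twice, because $\ovu(C\ovu)=(\ovu C)\ovu$ by the Moufang/alternative laws.

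I expect the main obstacle to be purely bookkeeping: pinning down exactly which products appear in each octonion coordinate of $\ovM^{\T}XM$, including the conventions for conjugation and $B$ versus $\ovB$ placement. Whatever the exact form, each coordinate is obtained from the original by left and/or right multiplication by $u$ or $\ovu$, each of norm $1$ and fixed by $\s$. So each summand of $\H$ is preserved individually, and the lemma follows.
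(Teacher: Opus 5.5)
Your proof is correct. Its overall shape matches the paper's: compute $X^{P_u}$, then check each summand of $\H$. The key step is handled differently, though.

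\textbf{The paper's route.} For the $A$- and $B$-terms the paper manipulates traces directly, e.g.
$\Tr((uA)(\ovA^{\s}\ovu))=\Tr(((uA)\ovA^{\s})\ovu)=\Tr(\ovu((uA)\ovA^{\s}))=\Tr(((\ovu u)A)\ovA^{\s})$.
This uses symmetry and associativity of the trace together with alternativity. Only for the $C$-term does it rewrite the summand as the polar-form value $\langle\ovu C\ovu,\ovu C^{\s}\ovu\rangle$. It then invokes \cite[Lemma~6.2]{BSW1}, that $x\mapsto\ovu x\ovu$ is a product of two reflexions.

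\textbf{Your route.} You write every octonion summand uniformly as the polar form of $\NN$ evaluated at $(x,x^{\s})$. You then combine two facts: the composition identity (the polar form scales by $\NN(v)$ under left or right multiplication by $v$), and the $\s$-equivariance $(vx)^{\s}=vx^{\s}$ for $v\in\OO_F$.

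\textbf{Comparison.} Your argument treats all three octonion co\"ordinates the same way and needs no appeal to the reflexion decomposition. It also makes transparent exactly where the hypothesis $u\in\OO_F$ enters. The paper's version is more computational, but it relies only on identities used elsewhere in the paper. Its $C$-step is in fact your idea (polar form plus an isometry), with the isometry supplied by \cite{BSW1} rather than by multiplicativity of the norm.

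\textbf{A small slip.} With $X^{P_u}=\ovM^{\T}XM$, the $(3,1)$ entry gives the new $B$-co\"ordinate $Bu$, not $B\ovu$. The paper records $(a,b,c\mid uA,Bu,\ovu C\ovu)$. Since $u$ and $\ovu$ both lie in $\OO_F$ and have norm $1$, your lemma covers either form, so nothing in the argument changes.
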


\begin{proof}
	Recall that the action on $\J$ is given by
	\begin{equation*}
		\begin{array}{r@{\;}c@{\;}l}
			P_u : (a, b, c \mid A, B, C) & \mapsto & (a, b, c \mid u A, B u, \ovu C \ovu),
		\end{array}
	\end{equation*}
	so the individual terms are being mapped in the following way:
	\begin{equation*}
		\begin{array}{r@{\;}c@{\;}l}
			a a^{\s} & \mapsto & a a^{\s}, \\
			b b^{\s} & \mapsto & b b^{\s}, \\
			c c^{\s} & \mapsto & c c^{\s}, \\
			\Tr( A \ovA^{\s} ) & \mapsto & \Tr( (u A) (\ovA^{\s} \ovu) ), \\
			\Tr( B \ovB^{\s} ) & \mapsto & \Tr( (B u) (\ovu \ovB^{\s}) ), \\
			\Tr( C \ovC^{\s} ) & \mapsto & \Tr( (\ovu C \ovu) (u \ovC^{\s} u) ).
		\end{array}
	\end{equation*}
	Note that $u^{\s} = u$ since $u \in \OO_F$. We find
	\begin{multline*}
		\Tr( (u A) (\ovA^{\s} \ovu) ) = \Tr( ((u A) \ovA^{\s}) \ovu ) = 
		\Tr( \ovu ((u A) \ovA^{\s}) ) \\
		= \Tr( ( \ovu (u A) ) \ovA^{\s} ) = 
		\Tr( ( (\ovu u) A ) \ovA^{\s} ) = \Tr(A \ovA^{\s}).
	\end{multline*}
	Similarly,
	\begin{equation*}
		 \Tr( (B u) (\ovu \ovB^{\s}) ) = \Tr( ((B u) \ovu ) \ovB^{\s} ) = 
		 \Tr( (B (u \ovu) ) \ovB^{\s} ) = \Tr(B \ovB^{\s}).
	\end{equation*}
	For the last term we have
	\begin{equation*}
		\Tr( (\ovu C \ovu) (u \ovC^{\s} u) ) = \inner{\ovu C \ovu}{\ovu C^{\s} \ovu}.
	\end{equation*}
	Now, as we know from Lemma 6.2 in \cite{BSW1}, the map $x \mapsto \ovu x \ovu$ is
	a product of two reflexions, hence,  
	\begin{equation*}
		\inner{\ovu C \ovu}{\ovu C^{\s} \ovu} = \inner{C}{C^{\s}} = \Tr(C\ovC^{\s}).
	\end{equation*}
\end{proof}

\begin{lemma}
	\label{lemma:5_isotropic}
	Let $x \in \OO_K$ be such that $\ovx^{\s} x = x \ovx^{\s} = 0$. Then $x\ovx = 0$.
\end{lemma}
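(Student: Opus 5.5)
The plan is to use the multiplicativity of the octonion norm. Only one of the two hypotheses, say $x\ovx^{\s}=0$, should be needed.

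First I will record the facts to be used. The norm $\NN$ on $\OO_F$ extends $K$-bilinearly to $\OO_K=\OO_F\otimes_F K$, and $x\ovx=\ovx x=\NN(x)1_{\OO}$ there. Since $\OO_K$ is again a composition algebra, now over $K$, the norm is multiplicative: $\NN(xy)=\NN(x)\NN(y)$ for all $x,y\in\OO_K$. I would justify this by observing that the identity $\NN(xy)=\NN(x)\NN(y)$ holds on $\OO_F$, so it is a polynomial identity in the co\"ordinates and persists after extending scalars to $K$.

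Next I will check how $\NN$ interacts with $\s$ and with conjugation. The map $\s$ acts only on co\"ordinates, and the co\"ordinate formula for $\NN$ has coefficients in $F$. Hence $\NN(x^{\s})=\NN(x)^{\s}$. Conjugation is $F$-linear on the basis and commutes with $\s$, and it preserves the norm. Therefore
\[
\NN(\ovx^{\s})=\NN(x^{\s})=\NN(x)^{\s}.
\]

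Now I apply $\NN$ to the hypothesis $x\ovx^{\s}=0$:
\[
0=\NN(x\ovx^{\s})=\NN(x)\,\NN(\ovx^{\s})=\NN(x)\,\NN(x)^{\s}=\NN_{K/F}\bigl(\NN(x)\bigr).
\]
Since $K$ is a field and $\s$ is an automorphism, $\NN(x)\NN(x)^{\s}=0$ forces $\NN(x)=0$. Hence $x\ovx=\NN(x)1_{\OO}=0$, which is the claim.

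The only point needing care is the multiplicativity of $\NN$ over $K$, that is, extending the composition identity from $\OO_F$ to $\OO_K$. I expect this to be routine by the polynomial-identity argument above. If a more hands-on route is preferred, one can write $x=y+\lambda z$ with $y,z\in\OO_F$. One would then compare the $1$ and $\lambda$ components of the scalar part of $x\ovx^{\s}$ and deduce $\NN(x)=0$ directly. I would use this only as a fallback, since the norm argument avoids it entirely.
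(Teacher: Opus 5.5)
Your proof is correct, but it takes a different route from the paper's. The paper argues by contradiction through invertibility. If $x\ovx=\NN(x)1_{\OO}\neq0$, then $x$ is invertible with inverse $\NN(x)^{-1}\ovx$. Cancelling $x$ in $x\ovx^{\s}=0$ (implicitly using $\ovx(xy)=\NN(x)y$) gives $\ovx^{\s}=0$, and hence $x=0$. You instead apply the norm to the product, getting $\NN(x\ovx^{\s})=\NN(x)\NN(x)^{\s}=\NN_{K/F}(\NN(x))$. This avoids inverses and cancellation in a non-associative algebra, and it makes the role of the field norm explicit. It also proves more: $\NN(x)=0$ already follows from $\NN(x\ovx^{\s})=0$, i.e.\ from $x\ovx^{\s}$ merely being isotropic. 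The paper's route needs only the cancellation property of non-isotropic elements, not multiplicativity of $\NN$.

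One step of your justification should be repaired. From ``the identity holds at every point of $\OO_F$'' you cannot conclude that it is a polynomial identity when $F$ is finite, which is the paper's main case (compare $t^q=t$ on $\Fq$). The fact itself is fine. $\OO_K$ has the same basis and multiplication table, so it is the split octonion algebra over $K$, and the composition property applies to it directly. Alternatively, $\NN(xy)-\NN(x)\NN(y)$ has integer coefficients and vanishes identically over $\mathbb{Q}$, hence over every field.
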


\begin{proof}
	If $x = 0$, then the result is trivial. Assume $x$ is non-zero, $\ovx^{\s} x = x\ovx^{\s} = 0$.
	If $x \ovx \neq 0$, then $x$ is invertible, and so $\ovx^{\s} = 0$, which implies $x = 0$, a 
	contradiction.
\end{proof}

Consider the matrices
\begin{equation}
	N_x = \begin{bmatrix}
		1 & x & 0 \\
		-\ovx^{\s} & 1 & 0 \\
		0 & 0 & 1
	\end{bmatrix},\quad
	N_x' = \begin{bmatrix}
		1 & 0 & 0 \\
		0 & 1 & x \\
		0 & -\ovx^{\s} & 1
	\end{bmatrix},\quad
	N_x'' = \begin{bmatrix}
		1 & 0 & -\ovx^{\s} \\
		0 & 1 & 0 \\
		x & 0 & 1
	\end{bmatrix},
\end{equation}
where $x\ovx^{\s} = \ovx^{\s}x = 0$ and $x,\ovx^{\s}$ generate a sociable subalgebra. 
It is easy to see that these encode elements of $\SE_6(K)$. Indeed,
\begin{equation}
	\begin{bmatrix}
		1 & x \\
		0 & 1
	\end{bmatrix}
	\begin{bmatrix}
		1 & 0 \\
		-\ovx ^{\s} & 1
	\end{bmatrix} = 
	\begin{bmatrix}
		1 - x\ovx ^{\s} & x \\
		-\ovx ^{\s} & 1 
	\end{bmatrix} = 
	\begin{bmatrix}
		1 & x \\
		-\ovx ^{\s} & 1
	\end{bmatrix}.
\end{equation}
So, the elements $N_x$, $N_x'$, and $N_x''$ preserve the Dickson--Freudenthal determinant. 
To verify that they preserve $\H$, we look at the action on $\J$:
\begin{equation}
	\label{eq:5_nx_image}
	\begin{array}{r@{\;}c@{\;}l}
		N_x: (a,b,c\mid A,B,C) & \mapsto &
		(a-\Tr(C\ovx ^{\s}), b+\Tr(\ovC x), c \mid \\
		& & \mid	A+\ovx \ovB , B-\ovA \ovx ^{\s},C-x^{\s}\ovC x+ax-bx^{\s}), \\
		
		N_x': (a,b,c\mid A,B,C) & \mapsto & 
		(a, b-\Tr(A \ovx^{\s}), c+\Tr(\ovA x) \mid \\
		& & \mid	A - x^{\s} \ovA x + bx-cx^{\s}, B + \ovx \ovC, C - \ovB \ovx^{\s}), \\
		
		N_x'': (a,b,c\mid A,B,C) & \mapsto &
		(a+\Tr(\ovB x), b, c-\Tr(B\ovx^{\s}) \mid \\
		& & \mid 	A-\ovC\ovx^{\s}, B-x^{\s}\ovB x + cx - ax^{\s}, C+\ovx\ovA).
	\end{array}
\end{equation}
We need to prove an auxiliary lemma.

\begin{lemma}
	\label{lemma:5_oct_aux}
Suppose that $x,y,z \in \OO_K$ with $x \ovx  = 0$. Then
		\begin{enumerate}[(i)]
			\item $x\Tr(yx) = x(yx)$,
			\item $\Tr( (xy)(z\ovx ) ) = 0$.		
		\end{enumerate}
\end{lemma}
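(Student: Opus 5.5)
We prove both parts from the standard composition-algebra identities in $\OO_K$: every $w$ satisfies $w^2-\Tr(w)w+\NN(w)=0$, we have $w\ovw=\ovw w=\NN(w)$, $\Tr(w)=w+\ovw$, and the Moufang/alternative laws hold. The hypothesis $x\ovx=0$ says $\NN(x)=0$, so $x^2=\Tr(x)x$ and $\ovx x=0$.

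For (i) we expand with the alternative laws. Since $\ovy\,\ovx=\overline{xy}$ and $\Tr(yx)=yx+\overline{yx}=yx+\ovx\,\ovy$, we get
\[
x\Tr(yx)=x(yx)+x(\ovx\,\ovy).
\]
By the left alternative law (the subalgebra generated by $x$ and $\ovy$ is associative, by Artin's theorem), $x(\ovx\,\ovy)=(x\ovx)\ovy=\NN(x)\ovy=0$. This gives (i) at once. We will check that we use the form $\ovx(xz)=\NN(x)z$, which is valid in any composition algebra.

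For (ii) we use $\Tr(uv)=\Tr(vu)$ together with associativity of the trace form, $\Tr((uv)w)=\Tr(u(vw))$. This gives
\[
\Tr\bigl((xy)(z\ovx)\bigr)=\Tr\bigl(((xy)z)\ovx\bigr)=\Tr\bigl(\ovx((xy)z)\bigr).
\]
The plan is then to rewrite $\ovx((xy)z)$ using a Moufang identity so that $\ovx x$ appears. The cleanest route is the bilinear-form viewpoint. Write $\Tr(u\ovv)=\inner{u}{v}$, the polar form of $\NN$. Then the quantity equals $\inner{xy}{x\ovz}$, since $\overline{z\ovx}=x\ovz$. Polarising $\NN(xy)=\NN(x)\NN(y)$ in the second factor gives $\inner{xy}{xw}=\NN(x)\inner{y}{w}$. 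With $w=\ovz$ this is $\NN(x)\inner{y}{\ovz}=0$.

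The main obstacle is only bookkeeping. We must match the paper's conventions, namely that $\Tr(u\ovv)$ is exactly the polar form $\inner{u}{v}$ without a factor of $2$, and that the polarised multiplicativity $\inner{xy}{xw}=\NN(x)\inner{y}{w}$ holds over $K$ in all characteristics, including $2$. Both follow because $\NN$ is multiplicative on the split octonions over any field, so they are characteristic-free. I would verify the first directly from the coordinate formulas for $\Tr$ and $\NN$ given in Section~2.
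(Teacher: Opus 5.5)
Your proof is correct. Part (i) is the paper's argument: expand $\Tr(yx)=yx+\ovx\,\ovy$, then use alternativity to get $x(\ovx\,\ovy)=(x\ovx)\ovy=0$. One small correction to your side remark: the identity you actually use there is $x(\ovx w)=\NN(x)w$, not $\ovx(xz)=\NN(x)z$, though both hold.

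For part (ii) you take a genuinely different route.
\begin{itemize}
\item The paper stays with trace manipulations. It uses $\Tr(uv)=\Tr(vu)$ and $\Tr((uv)w)=\Tr(u(vw))$ to rewrite the quantity as $\Tr\bigl(((z\ovx)x)y\bigr)$. The right alternative law then gives $(z\ovx)x=z(\ovx x)=\NN(x)z=0$.
\item You observe instead that $z\ovx=\overline{x\ovz}$. So the quantity is the polar form $\inner{xy}{x\ovz}$, and the polarised composition identity $\inner{xy}{xw}=\NN(x)\inner{y}{w}$ finishes the argument.
\end{itemize}
Your version needs only two facts: $\NN$ is multiplicative, and conjugation reverses products. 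It uses neither trace associativity nor Artin's theorem. It also makes the content visible: when $\NN(x)=0$, the subspace $x\OO_K$ is totally singular. The paper's version instead keeps to the trace-juggling toolkit used in the surrounding Section~5 calculations.

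Your convention check is also right. We have $\Tr(u\ovv)=u\ovv+v\ovu=\NN(u+v)-\NN(u)-\NN(v)$, with no factor of $2$. This matches the paper's later use of $f(u,v)=\Tr(u\ovv)$ as the polar form. The polarised identity follows from expanding $\NN(x(y+w))=\NN(x)\NN(y+w)$, so it holds in every characteristic.
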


\begin{proof}
	\leavevmode
	\begin{enumerate}[(i)]
	
	\item $x\Tr(yx) = x (yx + \ovx \ovy ) = x(yx) + x(\ovx \ovy )
		 = x(yx) + (x\ovx )\ovy = x(yx)$,
		 
	\item $\Tr( (xy)(z\ovx ) ) = \Tr( (z\ovx ) (xy) ) = \Tr( ((z\ovx )x) y ) = 
		\Tr(z(x\ovx )y) = 0$. \qedhere
	\end{enumerate}
\end{proof}

Obviously, it is enough to verify that the elements $N_x$ preserve the Hermitean form $\H$. 
The individual terms in $\H(X) = aa^{\s} + bb^{\s} + cc^{\s} + \Tr(A\ovA ^{\s} + B\ovB ^{\s} +
C\ovC ^{\s})$ are being mapped in the following way:
\begin{equation}
	\begin{array}{r@{\;}c@{\;}l}
		aa^{\s} & \mapsto & aa^{\s} - a^{\s}\Tr(C\ovx ^{\s}) - a\Tr(C^{\s} \ovx ) 
					+ \Tr(C^{\s}\ovx ) \Tr(C\ovx ^{\s}), \\
		bb^{\s} & \mapsto & bb^{\s} + b^{\s}\Tr(\ovC x) + b\Tr(\ovC ^{\s}x^{\s}) +
					\Tr(\ovC x)\Tr(\ovC ^{\s}x^{\s}), \\
		cc^{\s} & \mapsto & cc^{\s}, \\
		\Tr(A\ovA ^{\s}) & \mapsto & \Tr(A\ovA ^{\s}) + \Tr(AB^{\s}x^{\s}) + 
					\Tr(\ovx \ovB \ovA ^{\s}) + 
					\Tr((\ovx \ovB )(B^{\s}x^{\s})), \\
		\Tr(B\ovB ^{\s}) & \mapsto & \Tr(B\ovB ^{\s}) - 
					\Tr(\ovA \ovx ^{\s} \ovB ^{\s}) - \Tr(BxA^{\s}) + 
					\Tr((\ovA \ovx ^{\s})(xA^{\s})), \\
		\Tr(C\ovC ^{\s}) & \mapsto & \Tr(C\ovC ^{\s}) - 
					\Tr(C(\ovx ^{\s}C^{\s}\ovx )) - \Tr(C^{\s}(\ovx C\ovx ^{\s}))
					+ a\Tr(x\ovC ^{\s}) + \\
					& & + a^{\s}\Tr(C\ovx ^{\s}) -
					b^{\s}\Tr(C\ovx ) - b\Tr(x^{\s}\ovC ^{\s}).
	\end{array}
\end{equation}
Using Lemmas \ref{lemma:5_isotropic} and
\ref{lemma:5_oct_aux}, we get \mbox{$\Tr((\ovA \ovx ^{\s})(xA^{\s})) = 0= \Tr((\ovx \ovB )(B^{\s}x^{\s}))$}.
Next, we also obtain $\Tr(C( \ovx ^{\s} C^{\s} \ovx )) = \Tr(C \ovx ^{\s}
\Tr(C^{\sigma} \ovx )) = \Tr(C^{\sigma} \ovx )\Tr(C\ovx ^{\s})$. Likewise,
we get $\Tr(C^{\s}(\ovx C\ovx ^{\s})) = \Tr ((x^{\s}\ovC x)\ovC ^{\s}) = 
\Tr(\ovC ^{\s} (x^{\s}\ovC x)) = \Tr(\ovC ^{\s} x^{\s})\Tr(\ovC x)$. We see
that all the terms except $aa^{\s},bb^{\s},cc^{\s}$ and $\Tr(A\ovA ^{\s}),
\Tr(B\ovB ^{\s}), \Tr(C\ovC ^{\s})$ cancel out, so it follows that the elements
$N_x$ preserve the Hermitean form. Hence, we have shown the following. 

\begin{proposition}
	\label{prop:5_nx_2se}
	The matrices $N_x$, $N_x'$, and $N_x''$ where $x\ovx^{\s} = 0 = \ovx^{\s}x$ and $x, \ovx^{\s}$
	generate a sociable subalgebra, encode elements of ${}^2\SE_{6,K}(F)$. 
\end{proposition}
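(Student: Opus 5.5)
The proposition has two parts: each of $N_x$, $N_x'$, $N_x''$ preserves the Dickson--Freudenthal determinant $\fdet$ (so lies in $\SE_6(K)$), and each preserves $\H$. I would prove them in that order.

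\emph{Preservation of $\fdet$.} I would factor each matrix as a product of two elementary octonionic matrices, as in the displayed factorisation $\left[\begin{smallmatrix}1&x\\0&1\end{smallmatrix}\right]\left[\begin{smallmatrix}1&0\\-\ovx^{\s}&1\end{smallmatrix}\right]$. That factorisation uses $x\ovx^{\s}=0$ to kill the $(1,1)$ correction term. By \cite{BSW1}, elementary upper and lower unitriangular matrices with a single octonion entry encode elements of $\SE_6(K)$. The hypothesis that $x,\ovx^{\s}$ generate a sociable subalgebra guarantees that the action $X\mapsto \ovM^{\T}XM$ of the product agrees with the composite of the two actions. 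Hence $N_x$ preserves $\fdet$. For $N_x'$ and $N_x''$ I would avoid repeating the argument: conjugating $N_x$ by $\tau$, which cyclically permutes the three co\"ordinate slots, gives $N_x'$ and $N_x''$ (after matching the sign and conjugation conventions against \eqref{eq:5_nx_image}). Since $\tau$ lies in both $\SE_6(K)$ and the isometry group of $\H$, it then suffices to treat $N_x$ for $\H$ as well.

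\emph{Preservation of $\H$.} I would use the explicit image \eqref{eq:5_nx_image} and expand each of the six terms of $\H$, exactly as in the table above. The quartic-type cross terms vanish by Lemma~\ref{lemma:5_isotropic}, which gives $x\ovx=0$ and hence $x^{\s}\ovx^{\s}=0$, combined with Lemma~\ref{lemma:5_oct_aux}(ii). This disposes of $\Tr((\ovA\ovx^{\s})(xA^{\s}))$ and $\Tr((\ovx\ovB)(B^{\s}x^{\s}))$. Lemma~\ref{lemma:5_oct_aux}(i), together with cyclicity of the trace, rewrites the two terms of the form $\Tr(C\ovx^{\s}C^{\s}\ovx)$ as products of traces. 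These then cancel the corresponding product terms coming from $aa^{\s}$ and $bb^{\s}$.

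The remaining linear cross terms cancel in pairs. The $a$- and $b$-terms from $\Tr(C\ovC^{\s})$ cancel those from $aa^{\s}$ and $bb^{\s}$, using $\Tr(\ovy)=\Tr(y)$ and $\Tr(y^{\s})=\Tr(y)^{\s}$. The $A$--$B$ terms from $\Tr(A\ovA^{\s})$ cancel those from $\Tr(B\ovB^{\s})$, via $\Tr(AB^{\s}x^{\s})=\Tr(BxA^{\s})^{\s}$-type identities; these use trace symmetry and the fact that conjugation is an anti-automorphism. One also has to check that $\H$ is $F$-valued, so that these conjugate pairs combine correctly.

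The main obstacle is bookkeeping rather than any conceptual difficulty. The obstacle is whether the nonassociative triple products in the cross terms may be rebracketed. Such rebracketing is justified either by $\Tr((uv)w)=\Tr(u(vw))$, which holds in any octonion algebra, or by the sociability hypothesis, and I would check each step against one of these. I would also verify the claim that the $\tau$-conjugation carries $N_x$ to $N_x'$ and $N_x''$ exactly. If the signs fail to match, I would fall back to running the same term-by-term expansion directly on $N_x'$ and $N_x''$ from \eqref{eq:5_nx_image}.
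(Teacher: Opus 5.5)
Your route is the paper's: the factorisation for $\fdet$, the reduction to $N_x$, and a term-by-term cancellation based on \eqref{eq:5_nx_image}. Your $\tau$-argument for the reduction is correct, since $N_x'=\tau^{-1}N_x\tau$ and $N_x''=\tau^{-2}N_x\tau^{2}$ as matrices; this is what the paper's ``obviously'' amounts to.

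The step that fails as you justify it is the vanishing of the quartic terms $\Tr((\ovA\ovx^{\s})(xA^{\s}))$ and $\Tr((\ovx\ovB)(B^{\s}x^{\s}))$. The paper's write-up cites the same two lemmas at this point, so it needs the same repair. Lemma~\ref{lemma:5_oct_aux}(ii) concerns $\Tr((wy)(z\overline{w}))$ for a single element $w$ with $w\overline{w}=0$, whereas here the outer factors are $x$ and $\ovx^{\s}$. Lemmas~\ref{lemma:5_isotropic} and~\ref{lemma:5_oct_aux} use only $x\ovx^{\s}=\ovx^{\s}x=0$, and that hypothesis is not enough. Take $x=e_1+\lambda e_{-\omega}$ with $\lambda\in K\setminus F$. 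All products among $e_1,e_{-\omega}$ vanish, so the hypothesis holds. Yet for $\ovA=e_{-1}+\lambda e_{\omega}$ a direct computation gives $\Tr((\ovA\ovx^{\s})(xA^{\s}))=-(\lambda-\lambda^{\s})^{2}\neq0$. Thus the map given by \eqref{eq:5_nx_image} changes $\H$ on $(0,0,0\mid A,0,0)$. This is consistent with the proposition, because $\langle 1,e_1,e_{-\omega}\rangle$ is not sociable: $(e_1e_{-\omega})e_{\omega}=0$ but $e_1(e_{-\omega}e_{\omega})=-e_1$.

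So sociability must be used at exactly this point, not merely as a general licence to rebracket. Let $S$ be the sociable algebra generated by $x$ and $\ovx^{\s}$; it is closed under conjugation. Conjugating the defining identity gives $z(st)=(zs)t$ for $s,t\in S$ and $z\in\OO_K$. Hence
\[
\Tr((\ovA\ovx^{\s})(xA^{\s}))=\Tr(((\ovA\ovx^{\s})x)A^{\s})
=\Tr((\ovA(\ovx^{\s}x))A^{\s})=0 .
\]
The $B$-term vanishes in the same way, using the sociable algebra $S^{\s}$ and $x^{\s}\ovx=0$. Alternatively, Lemma~\ref{lemma:5_xsyx_eq_xyxs} gives $x^{\s}=\xi x$, after which Lemma~\ref{lemma:5_oct_aux}(ii) does apply.

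Your product-of-traces step has the same defect, since Lemma~\ref{lemma:5_oct_aux}(i) is again about a single element. What is true is
\[
\ovx^{\s}\Tr(C^{\s}\ovx)=\ovx^{\s}(C^{\s}\ovx)+\ovx^{\s}(x\ovC^{\s}),
\]
and the last term equals $(\ovx^{\s}x)\ovC^{\s}=0$ by sociability. This gives $\Tr(C(\ovx^{\s}C^{\s}\ovx))=\Tr(C^{\s}\ovx)\Tr(C\ovx^{\s})$ as required.
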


\begin{lemma}
	\label{lemma:5_xsyx_eq_xyxs}
	If $x \in \OO_K$ is such that $x \ovx^{\s} = 0 = \ovx^{\s} x$, with $x,\ovx^{\s}$ generating
	a sociable subalgebra, then $x^{\s} y x = x y x^{\s}$ for all $y \in \OO_K$.
\end{lemma}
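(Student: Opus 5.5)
The plan is to rewrite $x^{\s}$ in terms of $w:=\ovx^{\s}$ and reduce the claim to an identity in the pair $x,w$ that can be checked with the hypotheses $xw=wx=0$ and sociability. Since conjugation fixes scalars, $\ovx=\Tr(x)-x$, and applying $\s$ gives
\[
x^{\s}=t-w,\qquad t:=\Tr(x)^{\s}=\Tr(w)\in K .
\]
Substituting into both sides of the statement, it is enough to prove
\[
t\,(yx-xy)=wyx-xyw\qquad\text{for all }y\in\OO_K. \tag{$*$}
\]
Here $wyx$ and $xyw$ are unambiguous because $x,w$ lie in a common sociable subalgebra $S$.

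First I would collect the identities available in $S$. By Lemma~\ref{lemma:5_isotropic}, $x\ovx=\NN(x)=0$, and hence $w\ovv w$-type relations hold too: $\NN(w)=\NN(x)^{\s}=0$. The quadratic equations then give
\[
x^2=\Tr(x)\,x,\qquad w^2=t\,w .
\]
Expanding $xw=0$ and $wx=0$ with $w=t-x^{\s}$ gives $x\,x^{\s}=x^{\s}x=t\,x$. I would also use Lemma~\ref{lemma:5_oct_aux}(i) in the form $x\,\Tr(yx)=x(yx)$, together with its analogue for $w$.

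Second, I would attack $(*)$ with the linearised alternative laws. From $\ovu(uv)=\NN(u)v$, linearised in $u$, one gets
\[
\ovu(u'v)+\overline{u'}(uv)=\Tr(u\overline{u'})\,v ,
\]
and similarly on the right. I would apply this with $u,u'\in\{x,w\}$ and $v\in\{y,\,yx,\,xy\}$, and write $\ovx=\Tr(x)-x$ and $\ovw=t-w$ so that everything stays inside $\{1,x,w\}$. The aim is to express both $wyx$ and $xyw$ as $t\,yx$ and $t\,xy$ respectively, plus a common remainder. The relations $xw=wx=0$ should kill the cross terms, since any product in which $x$ and $w$ become adjacent vanishes by sociability. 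The trace $\Tr(x\ovw)=\Tr(x\,x^{\s})$ reduces to $t\,\Tr(x)$ by the previous step.

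The main obstacle is this second step: choosing the right combination of linearised Moufang and alternative identities so that all the $xw$-terms cancel cleanly. If that bookkeeping becomes unwieldy, my fallback is a coordinate argument. I would use the description of sociable subalgebras from \cite{BSW1}. Up to an automorphism of $\OO_F$ extended $K$-linearly, which commutes with $\s$ and with conjugation, a sociable subalgebra containing a non-invertible $x$ with $\ovx^{\s}x=0$ can be brought into a small standard form, for example spanned by $1$, $e_0$ and a few $e_i$ that multiply to zero. I would then split into the cases $t=0$ and $t\neq0$ and verify $(*)$ on basis vectors $y=e_j$ using the multiplication rules of Section~2. Both sides of $(*)$ are $K$-linear in $y$, so checking on the basis suffices.
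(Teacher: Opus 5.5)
Your reduction to $(*)$ and the preliminary identities $x^2=\Tr(x)x$ and $xx^{\s}=x^{\s}x=tx$ are correct. However, the proposal stops exactly where the content of the lemma lies, and the mechanism you describe for $(*)$ cannot work in general.

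The case $\Tr(x)\neq0$ is immediate. Applying $\s$ to $\ovx^{\s}x=0$ gives $\ovx\,x^{\s}=0$, so $\Tr(x)\,x^{\s}=(x+\ovx)x^{\s}=tx$ and $x^{\s}\in Kx$.

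The real case is $\Tr(x)=0$, for instance $x=\mu e_1$, which is how the lemma is used for $N_{\mu e_i}$ with $i\neq\pm0$. Then $t=0$, $\ovx=-x$, $\overline{w}=-w$, and $\Tr(u\overline{u'})=0$ for all $u,u'\in\{x,w\}$. So every instance of the linearised alternative laws you propose is a tautology. For example, $\ovx(wv)+\overline{w}(xv)=-(xw)v-(wx)v=0$ by sociability, the instances with $u=u'$ reduce to $x^2v=0=w^2v$, and likewise on the right. Whatever $v\in\{y,yx,xy\}$ you take, you learn nothing beyond sociability and $x^2=w^2=xw=wx=0$. These relations do not imply $wyx=xyw$: in the associative algebra $\mathrm{M}_3(F)$ they hold for $x=E_{12}$, $w=E_{13}$, yet $wE_{21}x=0\neq E_{13}=xE_{21}w$. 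So the difficulty is not bookkeeping; an ingredient is missing.

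The missing idea is to prove that $x^{\s}$ is a scalar multiple of $x$. To do this, the free slot of the linearised laws must hold an \emph{unrestricted} octonion, so that non-degeneracy of the norm form can be used. This is the paper's argument. Put $z=x^{\s}$ and $f(u,v)=\Tr(u\overline{v})$. For any $s\in\OO_K$,
\[
x(\overline{s}z)+s(\ovx z)=f(x,s)\,z,\qquad (x\overline{s})z+(x\ovz)s=f(s,z)\,x.
\]
Here $\ovx z=0$ is the $\s$-image of $\ovx^{\s}x=0$, and $x\ovz=x\ovx^{\s}=0$ by hypothesis. Sociability gives $x(\overline{s}z)=(x\overline{s})z$, hence $f(x,s)z=f(s,z)x$. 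Choosing $s$ with $f(x,s)=1$ yields $z\in Kx$, after which the lemma, and your $(*)$, is immediate.

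Your coordinate fallback does not supply this step either. The only automorphisms available are those of $\OO_F$, since these are the ones commuting with $\s$. Normalising by them therefore requires first knowing that the relevant data are defined over $F$; for $x$ itself this is precisely $x^{\s}\in Kx$. In addition, the classification of sociable subalgebras you rely on is neither stated nor justified.
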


\begin{proof}
	There is nothing to prove if $x=0$. Otherwise put $z=x^{\s}$.
	We shall show that $z$ is a scalar multiple of $x$.
	The hypotheses and their images under $\s$ give
	$x\ovz=0=\ovx z$.
	Write $f(u,v)=\Tr(u\ovv)$ for the non-degenerate polar form of the
	octonion norm. Polarising the identities
	$u(\ovu v)=\NN(u)v$ and $(v\ovu)u=\NN(u)v$ gives, for every $t\in\OO_K$,
	\[
	\begin{aligned}
		x(\ovt z)+t(\ovx z)&=f(x,t)z,\\
		(x\ovt)z+(x\ovz)t&=f(t,z)x.
	\end{aligned}
	\]
	Since $z=\Tr(z)1_{\OO}-\ovz$, sociability of $x,\ovz$, together
	with alternativity, gives $x(\ovt z)=(x\ovt)z$.
	The zero products above therefore reduce these two identities to
	\[
		f(x,t)z=f(t,z)x.
	\]
	As $x\neq0$ and $f$ is non-degenerate, we may choose $t$ with
	$f(x,t)=1$. Hence $z=\xi x$ for some $\xi\in K$. Consequently,
	for every $y\in\OO_K$,
	\[
		x^{\s}yx=\xi(xyx)=xyx^{\s},
	\]
	where $xyx$ is unambiguous by alternativity.
\end{proof}

Of great interest for us is the action of $N_x$ on $\J_{10}^{abC}$.

\begin{theorem}
	\label{theorem:5_nx_omega}
	The actions of the elements $N_x$ on $\J_{10}^{abC}$, where $x \in \OO_K$ is such that
	\mbox{$x \ovx^{\s} = 0 = \ovx^{\s} x$}, with $x,\ovx^{\s}$ generating a sociable subalgebra,
	generate a group of type $\Omega_{10,K}^{-}(F)$, as $x$ ranges through all suitable octonions
	in $\OO_K$.
\end{theorem}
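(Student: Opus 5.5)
Here $\J_{10}^{abC}$ is the $10$-space $\{(a,b,0\mid 0,0,C)\}$, on which the quadratic form $Q(X)=ab-C\ovC$ is hyperbolic of dimension $10$ over $K$. I would apply Proposition~\ref{prop:3_2forms} to $V=\J_{10}^{abC}$, taking $Q$ and $B=\B_{\H}$ restricted to $V$.

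\emph{Step 1: the forms and the image group.} From \eqref{eq:5_nx_image} with $c=0$, $A=B=0$, each $N_x$ preserves $V$, acting by
\[
(a,b\mid C)\mapsto\bigl(a-\Tr(C\ovx^{\s}),\ b+\Tr(\ovC x)\mid C-x^{\s}\ovC x+ax-bx^{\s}\bigr).
\]
Since $N_x$ fixes $c$ and preserves $\fdet$, it preserves $Q$ on $V$: compare $M$ or $\fdet$ on vectors $(a,b,1\mid0,0,C)$, using that the image has $A=B=0$. By Proposition~\ref{prop:5_nx_2se} it also preserves $\H$. So the restrictions lie in the group $G\le\GO(V,Q)$ preserving $B$.

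\emph{Step 2: identify $G$.} The map $\s$ does not give a $(Q,B)$-subspace directly, because $Q(a,b\mid C)=ab-C\ovC$ while $\B_{\H}$ has polar shape $ad^{\s}+be^{\s}+\cdots$. I would therefore take the $F$-subspace
\[
U=\{v\in V: B(w,v)=f(w,v)\ \forall w\}.
\]
Concretely, $U$ consists of vectors with $b=-a^{\s}$ (up to sign conventions) and $C$ fixed by a $\s$-semilinear map of the type $C\mapsto -\ovC^{\s}$ twisted by the octonion conjugation. Then I would check $V=U\otimes_FK$. The restriction $Q_U$ is an $F$-form of dimension $10$. Its discriminant is nontrivial, because on the $(a,b)$ part it is $-aa^{\s}$, a norm form; the same holds on suitable hyperbolic pairs $e_i,e_{-i}$ of $\OO$, and on the $e_0,e_{-0}$ part. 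This identifies it as $\GO_{10}^{-}(F)$ of Witt index $4\ge2$. Proposition~\ref{prop:3_2forms} then gives $G=\GO(U,Q_U)=\GO_{10}^{-}(F)$.

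\emph{Step 3: generation.} This is the main obstacle. I would show the $N_x$ restricted to $U$ are exactly the Siegel-type (long root) elements of $\Omega(U)$. Fix an isotropic vector such as $(1,0\mid0)$. The $N_x$ fix it and act unipotently: they lie in the unipotent radical of its stabiliser, mapping $C\mapsto C+ax$ modulo the line, with $x$ ranging over the $F$-points of the $8$-space $\{C\}\cap U$ that are singular isotropic.

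Here Lemma~\ref{lemma:5_xsyx_eq_xyxs} is key: it gives $x^{\s}=\xi x$, so $x$ lies in a $K$-line meeting the $F$-form. Hence the allowed $x$ are, up to scaling, the singular vectors of the $F$-form of $\OO_K$ inside $U$. The corresponding maps are Eichler--Siegel transformations $E_{u,x}$ with $u$ singular and $x\perp u$ singular. Conjugation by $\delta$ (and $\tau$ if needed) swaps $a,b$ and produces the opposite root groups.

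The Eichler transformations with both vectors singular generate $\Omega(U)$ when the Witt index is at least $2$; this is a standard classical-groups fact. To reach these, I would show the subgroup generated by the $N_x$ is normalised by enough of these elements. Alternatively, I would show directly that the root groups for all singular $u$ are obtained by conjugating within the generated group, using transitivity on singular points (Witt).

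Finally, each $N_x$ is unipotent, hence has trivial spinor norm and determinant $1$. So the generated group is contained in $\Omega_{10}^{-}(F)$ and equals it. The delicate points are the exact description of $U$ and checking that every singular $x\in U$ satisfies the sociability hypothesis. For the latter I expect $x,\ovx^{\s}$ to span a null subalgebra.
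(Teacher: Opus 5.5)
\textbf{There is a genuine gap: Step~3, the generation argument, rests on a wrong description of the generators.}

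Your Steps~1 and~2 are sound in outline. They are essentially the set-up the paper uses later, in the proof of Theorem~\ref{theorem:6_three_orbits}. There are two slips to fix:
\begin{itemize}
\item The subspace is $U=\{(a,a^{\s},0\mid0,0,C):C^{\s}=-C\}$, that is, $C\in\alpha\OO_F$ with $\alpha^{\s}=-\alpha$.
\item The $C$-part of $Q_U$ is $\alpha\alpha^{\s}\NN$ on $\OO_F$. This is a split $8$-space, not a sum of norm planes, and it is what gives Witt index~$4$.
\end{itemize}

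Step~3, which you yourself flag as the main obstacle, has the following problems.
\begin{itemize}
\item The $N_x$ do not fix $(1,0,0\mid0,0,0)$. By \eqref{eq:5_nx_image} this vector is sent to $(1,0,0\mid0,0,x)$.
\item Moreover $\H((1,0,0\mid0,0,0))=1$. So this point is not spanned by a singular vector of $U$, and no parabolic subgroup of $\GO(U,Q_U)$ is attached to it.
\item The $N_x$ are not Eichler transformations with both vectors singular. Substituting in \eqref{eq:5_nx_image}, and using $y\ovC y=\Tr(C\ovy)y$ for singular $y$, gives $N_{\mu e_i}=\rho_{u_i,v_\mu}$. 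Here $u_i=(0,0,0\mid0,0,e_i)$ is singular, but $v_\mu=(-\mu^{\s},\mu,0\mid0,0,0)$ is anisotropic, with $Q(v_\mu)=-\mu\mu^{\s}$. So the second vector lies in the anisotropic norm plane.
\item Over $K$, $N_x=M_xL_{-\ovx^{\s}}$ is a product of two long root elements. Those factors do not preserve $\H$, however, so they are not available to you.
\end{itemize}
Hence the generation fact you quote does not apply to your generators. Both of your suggested detours are also circular. Witt's theorem gives transitivity on singular points for $\GO(U,Q_U)$, not for $\langle N_x\rangle$, and that transitivity is exactly what has to be proved.

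\textbf{How to close the gap.} You need a mechanism that works inside $\langle N_x\rangle$ itself. One route is as follows.
\begin{itemize}
\item For $j\neq\pm i$, the commutator $[N_{\lambda e_i},N_{\mu e_j}]$ is the long root element $\rho_{u_i,c\,u_j}$ with $c=\pm\Tr_{K/F}(\lambda\mu^{\s})$. Since $\Tr_{K/F}$ is onto, $c$ runs through all of $F$.
\item These $24$ root groups fix the anisotropic plane pointwise. They generate $\Omega_8^{+}(F)$ on $W=\{(0,0,0\mid0,0,C):C\in\OO_F\}$.
\item Then, as in the proof of Theorem~\ref{theorem:6_three_orbits}, $\rho_{u,v_0/r}$ with $u\in W$ singular removes the anisotropic component $v_0$ of any singular vector. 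This gives transitivity on singular points.
\item Transitivity yields all Siegel transformations, and these generate $\Omega(U,Q_U)$ by \cite[Theorem~11.46]{Taylor}.
\end{itemize}

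The paper argues differently. Lemma~\ref{lemma:5_omega4minus} identifies $\langle N_{\lambda e_{\pm1}}\rangle$ with $\PSL_2(K)\cong\Omega_{4,K}^{-}(F)$, acting as $S\otimes DS^{\s}D^{-1}$ on $2\times2$ Hermitean matrices. It then enlarges $\Omega_4^-\to\Omega_6^-\to\Omega_8^-\to\Omega_{10}^-$ by adjoining $N_{e_{\pm\ombb}}$, $N_{e_{\pm\omega}}$ and $N_{e_{\pm0}}$.

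Finally, your containment argument ``unipotent, hence trivial spinor norm'' fails in characteristic~$2$. There, orthogonal transvections are unipotent but lie outside $\Omega$. Use instead the fact that Siegel transformations lie in $\Omega$.
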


The proof follows the same inductive strategy as the construction of $\Omega_{10}^+(F)$
in \cite{BSW1}: we build up $\Omega_4 \to \Omega_6 \to \Omega_8 \to \Omega_{10}$ by
successively adjoining elements. In the present case, the parameter $\lambda$ ranges
over $K$ rather than $F$, yielding the
minus-type group $\Omega_{10,K}^{-}(F)$.

\begin{lemma}
	\label{lemma:5_omega4minus}
	The actions of $N_{\lambda e_{\pm 1}}$ on the $4$-dimensional $K$-space
	$V_4$ spanned by the elements of the form $(a,b,0\mid 0,0,C)$ with $C \in \langle e_{-1}, e_1\rangle$,
	as $\lambda$ ranges over $K$, generate $\Omega_{4,K}^{-}(F)$.
\end{lemma}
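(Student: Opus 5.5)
The plan is to make everything explicit on $V_4$, find the $F$-form of $V_4$ singled out by the two forms as in Proposition~\ref{prop:3_2forms}, and recognise the generated group as the classical image of $\SL_2(K)$ in the orthogonal group of a $4$-dimensional $F$-space of minus type.

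First I check the hypotheses and compute the action. For $x=\lambda e_1$ we have $\ovx^{\s}=-\lambda^{\s}e_1$, so $x\ovx^{\s}=\ovx^{\s}x=0$ because $e_1e_1=0$. Also $\langle e_1\rangle$ together with $F\cdot 1_{\OO}$ is associative; I expect it to be sociable, as in \cite{BSW1}. The case $e_{-1}$ is symmetric. So $N_{\lambda e_{\pm1}}\in{}^2\SE_{6,K}(F)$ by Proposition~\ref{prop:5_nx_2se}.

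Next I specialise \eqref{eq:5_nx_image} to $X=(a,b,0\mid 0,0,\gamma e_{-1}+\gamma' e_1)$. One must check that $V_4$ is preserved, i.e.\ that $x^{\s}\ovC x$, $ax$ and $bx^{\s}$ all lie in $\langle e_{-1},e_1\rangle$. The result is an explicit $4\times4$ matrix over $K$ in the coordinates $(a,b,\gamma,\gamma')$. The quadratic form restricted to $V_4$ is, up to sign, $Q=ab-\NN(C)=ab-\gamma\gamma'$, coming from the part of $\fdet$ involving $c$. The Hermitean form is $\H=aa^{\s}+bb^{\s}+\Tr(C\ovC^{\s})$.

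Then I determine the $F$-subspace
\[
U=\{v\in V_4:\ \B_{\H}(w,v)=f(w,v)\ \text{for all } w\in V_4\},
\]
with $f$ the polar form of $Q$, exactly as in the proof of Proposition~\ref{prop:3_2forms}. I expect $U$ to consist of vectors with $b$ a fixed multiple of $a^{\s}$, $\gamma'$ a fixed multiple of $\gamma^{\s}$, possibly after rescaling one coordinate by a scalar of norm $-1$, which exists by \eqref{eq:1_norm_surjective}. On such vectors $Q$ becomes a norm form $\NN_{K/F}(a)$ minus a norm form $\NN_{K/F}(\gamma)$. This is a $4$-dimensional $F$-quadratic space of Witt index $1$ and discriminant given by $K$, that is, of minus type. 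Every element preserving both forms stabilises $U$, so the generated group embeds in $\GO(U,Q_U)=\GO_4^-(F)$. Concretely, I would identify $U$ with the $2\times2$ $\s$-Hermitean matrices $M$ over $K$, with $Q=\det M$. Then $N_{\lambda e_1}$ and $N_{\lambda e_{-1}}$ act as $M\mapsto \bar g^{\T}Mg$ with $g=\left[\begin{smallmatrix}1&\lambda\\0&1\end{smallmatrix}\right]$, respectively the lower unitriangular analogue, up to sign and conjugation conventions to be fixed from the computed matrix.

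Finally, the two root subgroups $\{N_{\lambda e_1}\}$ and $\{N_{\lambda e_{-1}}\}$, each isomorphic to $(K,+)$, generate $\SL_2(K)$. The map $\SL_2(K)\to\GO(U)$ has kernel $\{\pm1\}$, and its image is the standard description $\Omega_4^-(F)\cong\PSL_2(K)$. The image lies in $\Omega$ because it is generated by unipotent elements; for the tiny cases one can note that $\SL_2(K)$ is perfect unless $|K|\le 3$, which is impossible since $|K|=|F|^2\ge4$.

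The main obstacle is the middle step. One has to pin down $U$ correctly, with the right signs and the possible norm $-1$ twist, and verify that the computed $4\times4$ matrices really match the Hermitean-matrix model. Only then can the isomorphism $\PSL_2(K)\cong\Omega_4^-(F)$ be quoted rather than reproved.
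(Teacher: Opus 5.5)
Your overall route matches the paper's. You compute the matrices on $V_4$ and model an $F$-form of $V_4$ by $2\times2$ $\s$-Hermitean matrices with $Q=\det$. The two root groups then generate $\SL_2(K)$, acting with kernel $\{\pm1\}$, and you quote the standard surjection $\SL_2(K)\to\Omega_4^-(F)$. The paper does the same, extracting the $\SL_2(K)$ from the Kronecker factorisations $S\otimes DS^{\s}D^{-1}$ and citing Taylor's Corollary~12.42.

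However, your explicit description of the $F$-form is wrong, and the error changes the answer. For $C=\gamma e_{-1}+\gamma' e_1$ and $D=\delta e_{-1}+\delta' e_1$ we have $\Tr(C\ovD^{\s})=\gamma(\delta')^{\s}+\gamma'\delta^{\s}$. So $\B_{\H}$ couples the $e_{-1}$-coefficient with the conjugate of the $e_1$-coefficient, not each co\"ordinate with its own conjugate. Hence, for $v=(a,b,0\mid0,0,\gamma e_{-1}+\gamma' e_1)$, the condition $\B_{\H}(w,v)=f(w,v)$ for all $w\in V_4$ is equivalent to
\[
b=a^{\s},\qquad \gamma^{\s}=-\gamma,\qquad (\gamma')^{\s}=-\gamma',
\]
with $\gamma,\gamma'$ independent. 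Dividing by a scalar $\alpha$ with $\alpha^{\s}=-\alpha$ gives the paper's
\[
W_4=\{(a,-a^{\s},0\mid0,0,re_{-1}+se_1):a\in K,\ r,s\in F\}.
\]
On $W_4$, $Q=-aa^{\s}-rs$ is an anisotropic norm plane plus a hyperbolic plane, so it has Witt index $1$.

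Your predicted form instead comes from $\gamma'$ being proportional to $\gamma^{\s}$, giving $\NN_{K/F}(a)-c\,\NN_{K/F}(\gamma)$. This form has trivial discriminant, so it is hyperbolic or anisotropic and never has Witt index $1$. With $c=1$, as you wrote it, or for any $c$ under \eqref{eq:1_norm_surjective}, it is hyperbolic, and you would be led to $\Omega_4^+(F)$.

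Your Hermitean-matrix model is in fact the right one, because its diagonal entries are independent elements of $F$. It is the paper's $Y=\begin{bmatrix}r&a^{\s}\\a&-s\end{bmatrix}$, with $\det Y=-aa^{\s}-rs$ and action $Y\mapsto S^{\T}YS^{\s}$. But this model contradicts your earlier description of $U$ rather than realising it, so you must take $U=W_4$. With that correction, the rest of your argument coincides with the paper's.

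Two smaller points:
\begin{enumerate}
\item No scalar of norm $-1$, and hence no use of \eqref{eq:1_norm_surjective}, is needed here. The twist is by the trace-zero $\alpha$, or equivalently one works with $-\B_{\H}$.
\item ``Generated by unipotent elements, hence in $\Omega$'' is not a valid reason in characteristic $2$, where reflexions are unipotent. Your perfectness remark covers this, as does simply quoting the standard isomorphism.
\end{enumerate}
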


\begin{proof}
	With respect to the basis $v_1 = (0,0,0\mid 0,0,e_{-1})$, $v_2 = (0,1,0\mid 0,0,0)$,
	$v_3 = (1,0,0\mid 0,0,0)$, $v_4 = (0,0,0\mid 0,0,e_1)$, the matrices of
	$N_{\lambda e_{-1}}$ and $N_{\lambda e_1}$ are
	\begin{equation*}
		\begin{bmatrix}
			1 & 0 & 0 & 0 \\
			-\lambda^{\s} & 1 & 0 & 0 \\
			\lambda & 0 & 1 & 0 \\
			-\lambda\lambda^{\s} & \lambda & -\lambda^{\s} & 1
		\end{bmatrix},\quad
		\begin{bmatrix}
			1 & \lambda & -\lambda^{\s} & -\lambda\lambda^{\s} \\
			0 & 1 & 0 & -\lambda^{\s} \\
			0 & 0 & 1 & \lambda \\
			0 & 0 & 0 & 1
		\end{bmatrix}.
	\end{equation*}
	These factor as Kronecker products:
	\begin{equation*}
		N_{\lambda e_{-1}} =
		\begin{bmatrix} 1 & 0 \\ \lambda & 1 \end{bmatrix} \otimes
		\begin{bmatrix} 1 & 0 \\ -\lambda^{\s} & 1 \end{bmatrix},\quad
		N_{\lambda e_1} =
		\begin{bmatrix} 1 & -\lambda^{\s} \\ 0 & 1 \end{bmatrix} \otimes
		\begin{bmatrix} 1 & \lambda \\ 0 & 1 \end{bmatrix}.
	\end{equation*}
	Put $D=\diag(1,-1)$. In each displayed tensor product, the second
	factor is $DS^{\s}D^{-1}$, where $S$ is the first factor. This relation
	is preserved under multiplication. Since the first factors run through
	all upper and lower elementary matrices of $\SL_2(K)$, the generated
	group is
	\begin{equation*}
		\{\,S\otimes DS^{\s}D^{-1}:S\in\SL_2(K)\,\}.
	\end{equation*}
	Thus we obtain a subgroup of the central product
	$\SL_2(K)\circ\SL_2(K)$ in which the second factor is determined by
	the first. This representation has kernel $\{\II_2,-\II_2\}$, since
	a tensor product equal to $\II_4$ forces $S$ to be scalar. Its image
	is therefore isomorphic to $\PSL_2(K)$.

	To identify the orthogonal action, consider the $4$-dimensional $F$-subspace
	\begin{equation*}
		W_4=\{\,(a,-a^{\s},0\mid0,0,r e_{-1}+s e_1):
		       a\in K,\ r,s\in F\,\},
	\end{equation*}
	which spans $V_4$ over $K$. Identify the displayed vector with the
	Hermitean matrix
	\begin{equation*}
		Y=\begin{bmatrix}r&a^{\s}\\a&-s\end{bmatrix},
		\qquad \det(Y)=-aa^{\s}-rs.
	\end{equation*}
	Thus the restriction to $W_4$ of $Q(a,b,0\mid0,0,C)=ab-C\ovC$
	is represented by the determinant. The right action of
	$S\otimes DS^{\s}D^{-1}$ becomes $Y\mapsto S^{\T}YS^{\s}$.
	Since the norm plane is anisotropic and the $(r,s)$-plane is hyperbolic,
	this quadratic form has Witt index $1$. The displayed action is the
	standard action of $\SL_2(K)$ onto $\Omega_{4,K}^{-}(F)$
	\cite[Corollary~12.42]{Taylor}. This also holds in characteristic $2$,
	where the kernel is trivial.
\end{proof}

\begin{proof}[Proof of Theorem~\ref{theorem:5_nx_omega}]
We extend $\Omega_{4,K}^{-}(F)$ to $\Omega_{10,K}^{-}(F)$ in three steps, following the
pattern established in \cite{BSW1}.

\textit{Step 1.}
Consider the $6$-space $V_6 = \{(a,b,0\mid 0,0,C) : C \in \langle e_{-1}, e_{\ombb},
e_{-\ombb}, e_1\rangle\}$. Our copy of $\Omega_{4,K}^{-}(F)$ fixes the singular isotropic
vectors $u_{\ombb} = (0,0,0\mid 0,0,e_{\ombb})$ and $u_{-\ombb} = (0,0,0\mid 0,0,e_{-\ombb})$.
The element $N_{e_{\ombb}}$ preserves $u_{\ombb}$ but moves $u_{-\ombb}$; adjoining it
gives a group of shape $F^4 \cn \Omega_{4,K}^{-}(F)$. The element $N_{e_{-\ombb}}$ does
not preserve $\langle u_{\ombb}\rangle$, so adjoining it yields $\Omega_{6,K}^{-}(F)$.

\textit{Step 2.}
Consider the $8$-space $V_8$ spanned by the elements \mbox{$(a,b,0\mid 0,0,C)$} with
$C \in \langle e_{-1}, e_{\ombb}, e_{\omega}, e_{-\omega}, e_{-\ombb}, e_1\rangle$. The vectors
$u_{\pm\omega} = (0,0,0\mid 0,0,e_{\pm\omega})$ are fixed by $\Omega_{6,K}^{-}(F)$. 
Adjoining $N_{e_{\omega}}$ (which preserves $u_{\omega}$
but not $u_{-\omega}$) gives $F^6 \cn \Omega_{6,K}^{-}(F)$; adjoining $N_{e_{-\omega}}$
(which moves $\langle u_{\omega}\rangle$) yields $\Omega_{8,K}^{-}(F)$.

\textit{Step 3.}
The full space $\J_{10}^{abC}$ contains the additional singular isotropic vectors
$u_{\pm 0} = (0,0,0\mid 0,0,e_{\pm 0})$, fixed by
$\Omega_{8,K}^{-}(F)$. Adjoining $N_{e_0}$ gives $F^8 \cn \Omega_{8,K}^{-}(F)$;
adjoining $N_{e_{-0}}$ yields $\Omega_{10,K}^{-}(F)$.
\end{proof}

\section{Action of ${}^2\SE_{6,K}(F)$ on white points}

As in the case of $\SE_6$ (see \cite{BSW1}), we are interested in the action on white points. We will, however, see
that although $\SE_6$ acts transitively on white points, the action of ${}^2\SE_{6,K}(F)$ splits
into several orbits. We say that a non-zero Albert vector $X$ is \textit{isotropic} if
$\H(X) = 0$. 

We first consider some examples. Suppose $X_1 = (0,0,0\mid 0,0,e_0)$. As we know (see \cite[Section 7]{BSW1}),
it determines 
a $17$-space $\{\ (a,b,0 \mid A,B,C)\ \big|\ e_0 A = B e_0 = \Tr(e_0 \ovC) = 0\ \}$. A
straightforward calculation shows that
this $17$-space $U_1$ is spanned by the Albert vectors of the form \mbox{$(a,b,0\mid A,B,C)$} with
\begin{equation}
	\begin{array}{r@{\;}c@{\;}l}
		A & \in & \langle e_{\bar{\omega}}, e_{\omega}, e_{-0}, e_1 \rangle, \\
		B & \in & \langle e_{-1}, e_{-0}, e_{-\omega}, e_{-\bar{\omega}} \rangle, \\
		C & \in & \langle e_{-1}, e_{\bar{\omega}}, e_{\omega}, e_0, e_{-\omega}, e_{-\bar{\omega}}, e_1
		\rangle.
	\end{array}
\end{equation}
We are also interested in the radical $R_1$ of $\H$ inside this $17$-space. In our case it is spanned
by the vectors of the form \mbox{$(0,0,0\mid A,B,C)$} with
\begin{equation}
	\begin{array}{r@{\;}c@{\;}l}
		A & \in & \langle e_{\bar{\omega}}, e_{\omega}, e_{-0}, e_1 \rangle, \\
		B & \in & \langle e_{-1}, e_{-0}, e_{-\omega}, e_{-\bar{\omega}} \rangle, \\
		C & \in & \langle e_0 \rangle.
	\end{array}
\end{equation}
In other words, our vector $X_1$ determines the $17$-space $U_1$ and the $9$-dimensional radical $R_1$ 
of $\H$ in $U_1$. Note that $X_1$ is isotropic with respect to $\H$, and also $X_1 \in R_1$. 

It turns out that there is another type of isotropic white vectors. Consider the vector
$X_2 = (0,0,0 \mid 0,0,e_0 + \lambda e_1)$, where $\lambda \in K\setminus F$. It again determines a 
$17$-space $U_2$, spanned by the Albert vectors of the form $(a,b,0\mid A,B,C)$ with 
\begin{equation}
	\label{eq:U2}
	\begin{array}{r@{\;}c@{\;}l}
		A & \in & \langle e_{\bar{\omega}}+\lambda e_{-\omega}, e_{\omega}-\lambda e_{-\bar{\omega}}
		, e_{-0}, e_1
		\rangle, \\
		B & \in & \langle e_{-1}+\lambda e_0, e_{-0} - \lambda e_1,
	e_{-\omega}, e_{-\bar{\omega}}  \rangle, \\
		C & \in & \langle e_{-1}-\lambda^2 e_1 - \lambda \cdot 1_{\OO}, e_{\bar{\omega}}, e_{\omega},
	e_0 + \lambda e_1, e_{-\omega}, e_{-\bar{\omega}}, e_1 \rangle.
	\end{array}
\end{equation}
We find that the radical $R_2$ of $\H$ in $U_2$ is spanned by the Albert vectors of the form 
$(0,0,0\mid A,B,C)$ with 
\begin{equation}
	\label{eq:R2}
	\begin{array}{r@{\;}c@{\;}l}
		A & \in & \langle e_{-0}, e_1 \rangle, \\
		B & \in & \langle e_{-\omega}, e_{-\bar{\omega}} \rangle, \\
		C & \in & \langle e_0 + \lambda^{\s} e_1 \rangle,
	\end{array}
\end{equation}
i.e. it is $5$-dimensional. We notice that $X_2$ is isotropic, but in this case $X_2 \not\in R_2$.

We conclude that the white points $\langle X_1 \rangle$ and $\langle X_2 \rangle$ belong to different
orbits under the action of ${}^2\SE_{6,K}(F)$. Of course, there is also at least one orbit on the
non-isotropic white points. Under hypothesis~\eqref{eq:1_norm_surjective},
we will now prove that these are the only three orbits.

\begin{lemma}
\label{lemma:6_reduce_J10}
Every white point is $ {}^2\SE_{6,K}(F)$-conjugate to a white point in
$\J_{10}^{abC}$.
\end{lemma}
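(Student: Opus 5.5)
We aim to move an arbitrary white vector $X=(a,b,c\mid A,B,C)$ to one with $c=0$, $A=0$ and $B=0$, using only the elements constructed in Section~5: the permutations $\delta,\tau$, the maps $P_u$ of Lemma~\ref{lemma:5_pu_hermitean}, and $N_x,N_x',N_x''$ of Proposition~\ref{prop:5_nx_2se}. By \eqref{eq:5_nx_image}, the element $N_x'$ changes $A$ by $bx-cx^{\s}-x^{\s}\ovA x$, changes $B$ by $\ovx\,\ovC$ and $C$ by $-\ovB\,\ovx^{\s}$; similarly $N_x''$ changes $B$ and $C$. The plan is to kill the off-$\J_{10}^{abC}$ coordinates in stages: first arrange $c=0$, then $A=0$, then $B=0$, checking at each step that the earlier zeros survive.

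\emph{Step 1: arrange $c=0$.} If $a,b,c$ are all nonzero, we use whiteness. By Lemma~\ref{lemma:2_whiteness}, $C\ovC=ab\neq0$, so $C$ is invertible. Applying $N_x$ with suitable isotropic $x$ (for instance $x=\lambda e_{1}$, $\lambda\in K$) changes $a,b$ by $-\Tr(C\ovx^{\s})$ and $\Tr(\ovC x)$, while leaving $c$ unchanged. We then use $\tau$ to rotate coordinates so that a zero diagonal entry sits in position $c$. Concretely, we first use $N_x$ to make $a=0$ or $b=0$ by solving a single linear equation in $\lambda$ over $K$; this is possible when $\Tr(C e_{\mp1})\neq0$, which can be arranged after a preliminary $P_u$ or $\tau$ if necessary. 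Then a cyclic permutation $\tau^{\pm1}$ moves that zero to the third position.

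\emph{Step 2: kill $A$ and $B$ given $c=0$.} With $c=0$ whiteness gives $A\ovA=0$, $B\ovB=0$, $AB=0$, $BC=a\ovA$ and $CA=b\ovB$. If $a\ne0$ or $b\ne0$, these relations express $A$ and $B$ in terms of $C$, namely $A=a^{-1}\overline{BC}$. Then a single $N_x''$ or $N_x'$ with $x$ built from $A$ should clear both. For instance, when $b\neq0$, $N_x'$ with $x=-b^{-1}A$ changes $A$ by $bx=-A$ to first order, and the quadratic term $x^{\s}\ovA x$ vanishes by Lemma~\ref{lemma:5_oct_aux}.

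If $a=b=0$, then $X$ lies in the octonion part, and we apply $\delta,\tau$ to reduce to a vector with only $C\neq0$. This uses $N$-type elements when two of $A,B,C$ are nonzero. In the degenerate situation where all diagonal entries can be made zero but two octonion entries remain, we first apply some $N_x$ to create a nonzero diagonal entry (via $\Tr(\ovC x)$ etc.), and then return to the previous case.

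\emph{Main obstacle.} The elements $N_x$ require $x\ovx^{\s}=0=\ovx^{\s}x$ together with sociability, whereas the natural choice $x=-b^{-1}A$ need only satisfy $A\ovA=0$, not $A\ovA^{\s}=0$. This is precisely where isotropy interferes. I would handle it by first normalising $A$ (via $P_u$ with $u\in\OO_F$, which acts transitively enough on nonzero vectors of $\OO_F$ of given norm type) to lie in a coordinate line such as $\langle e_1\rangle$ or $\langle e_0+\mu e_1\rangle$. I would then decompose the required shift into a sum of admissible $x=\lambda e_i$ with $i\neq\pm0$, each of which trivially satisfies the hypotheses since $e_i\ovx e_i$-type products vanish and $\{e_i\}$ generates a sociable (associative) subalgebra. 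Applying these elementary $N'_{\lambda e_i}$ one coordinate at a time, as in the proof of Theorem~\ref{theorem:5_nx_omega}, should clear $A$ and then $B$. I expect the bookkeeping showing that the quadratic correction terms vanish at each stage to be the bulk of the work.
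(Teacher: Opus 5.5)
Your Step~1 is fine. It is essentially the paper's first move: $N_{te_i}$, for any $e_i$ with $\Tr(C\overline{e_i})\neq0$, kills a scalar co\"ordinate. The gap is in Step~2, and the repair you propose under ``Main obstacle'' does not work.

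Using the scalar $b$ as the lever means you must control the quadratic term of $N'_x$. The choice $x=-b^{-1}A$ is admissible only if $A\ovA^{\s}=0$, so in particular only if $\Tr(A\ovA^{\s})=0$. The $P_u$ with $u\in\OO_F$ preserve $\Tr(A\ovA^{\s})$ (see the proof of Lemma~\ref{lemma:5_pu_hermitean}), and this quantity vanishes on $\langle e_1\rangle$ and on $\langle e_0+\mu e_1\rangle$. So no such normalisation exists when $\NN(A)=0\neq\Tr(A\ovA^{\s})$, as for the octonion $C_r$ of Section~\ref{section:conclusions}.

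The coordinatewise fallback fails as well. Write $A=\sum A_je_j$. Lemma~\ref{lemma:5_oct_aux}(i) gives $e_i\ovA e_i=A_{-i}e_i$, so for $x=\lambda e_i$ and $c=0$ the element $N'_x$ acts by
\[
A_i\longmapsto A_i+b\lambda-\lambda\lambda^{\s}A_{-i},\qquad
b\longmapsto b-\lambda^{\s}A_{-i},\qquad
c\longmapsto c+\lambda A_{-i}.
\]
This has three consequences.
\begin{enumerate}[(i)]
\item The ``$e_i\ovx e_i$-type products'' do not vanish.
\item Clearing $A_i$ is a norm-type equation in $\lambda$ that need not be solvable. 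Over $\Fqs$, for $s\neq0$, the map $\lambda\mapsto b\lambda-\lambda\lambda^{\s}s$ vanishes at $0$ and at $(b/s)^{\s}$, so it is not onto.
\item The zero $c=0$ from Step~1 is destroyed. Your summary of $N'_x$ omits its effect on $b$ and $c$.
\end{enumerate}
These bad pairs cannot be avoided when $\Tr(A\ovA^{\s})\neq0$. If one of $A_i,A_{-i}$ vanished for each $i$, then $A$ and $A^{\s}$ would lie in a single totally singular co\"ordinate subspace, forcing $\Tr(A\ovA^{\s})=0$. Your case $a=b=0$ is only sketched, and it reduces to the case above, so it inherits the same problem.

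The missing idea is to use the octonion $C$, not a scalar, as the lever. The paper makes $a=0$ instead of $c=0$, so that $BC=a\ovA=0$, and normalises $C_0=1$ with a $P_u$ and a rescaling. It then applies $N'_x$, which shifts $B$ by $\ovx\ovC$, with $x$ a multiple of $e_0,e_{-1},e_{-\omega},e_{-\ombb}$. This clears $B_{-0},B_{-1},B_{-\omega},B_{-\ombb}$ one at a time without changing $C_0$. The $e_0,e_1,e_\omega,e_{\ombb}$ coefficients of $BC=0$ are then exactly $B_0,B_1,B_\omega,B_{\ombb}$, so $B=0$, and $AB=c\ovC$ gives $c=0$. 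Next, $N''_x$ preserves $a=c=0$ and $B=0$ and shifts $A$ by $-\ovC\ovx^{\s}$. This clears four coefficients of $A$, and $CA=b\ovB=0$ kills the other four.

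Only multiples of basis octonions are used, so admissibility is automatic. The quadratic terms never matter: in the $N'$ stage the quadratic term lands in $A$, which is cleared later, and in the $N''$ stage it lands in $B$, which is already zero.
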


\begin{proof}
	Let $X=(a,b,c\mid A,B,C)$ be white. If $A=B=C=0$, then exactly one
	scalar co\"ordinate is non-zero, and a power of $\tau$ gives the result.
	Otherwise, after applying a power of $\tau$, we may assume $C\neq0$.
	Choose a basis octonion $e_i$ with $\Tr(C\overline{e_i})\neq0$ and
	$t\in K$ such that $t^{\s}\Tr(C\overline{e_i})=a$.
	By~\eqref{eq:5_nx_image}, $N_{t e_i}$ makes $a=0$.
	All scalar multiples of basis octonions are admissible parameters:
	they have the required zero products, and lie with their conjugates
	in the sociable $K$-algebra spanned by $1_{\OO}$ and $e_i$.

	If now $B=C=0$, a power of $\tau$ again gives the result. If $C=0$
	but $B\neq0$, interchange the second and third matrix co\"ordinates,
	using an element generated by $\delta$ and $\tau$. This keeps $a=0$
	and replaces $C$ by $\ovB\neq0$. We may therefore assume
	\[
		a=0,\qquad C\neq0,\qquad BC=0.
	\]
	Write $A=\sum A_i e_i$, $B=\sum B_i e_i$, and $C=\sum C_i e_i$.
	We next arrange that $C_0\neq0$, using the elements $P_u$.
	If $C_0=0$ and $C_i\neq0$ for some $i\neq\pm0$, take
	$u=1_{\OO}+e_{-i}$: in $\ovu C\ovu$ the coefficient of $e_0$ is $C_i$.
	If only $C_{-0}$ is non-zero, take $u=e_1+e_{-1}$, which gives
	coefficient $-C_{-0}$. Both choices have norm $1$ and lie in $\OO_F$.
	Rescale the representative to obtain $C_0=1$.

	For $a=0$, formula~\eqref{eq:5_nx_image} gives
	\[
		B\mapsto B+\ovx\ovC,\qquad C\mapsto C-\ovB\ovx^{\s}
	\]
	under $N'_x$. The multiplication table gives the following changes:
	\[
	\begin{array}{c|c}
		x&\text{changed coefficient}\\ \hline
		t e_0&B_{-0}\mapsto B_{-0}+t\\
		t e_{-i},\quad i\in\{1,\omega,\ombb\}
			&B_{-i}\mapsto B_{-i}-t .
	\end{array}
	\]
	In each row, $C_0$ and the other three coefficients among
	$B_{-0},B_{-1},B_{-\omega},B_{-\ombb}$ are unchanged.
	Thus, successively taking the parameters
	\[
		-B_{-0}e_0,\quad B_{-1}e_{-1},\quad
		B_{-\omega}e_{-\omega},\quad B_{-\ombb}e_{-\ombb}
	\]
	makes these four coefficients zero. We still have $BC=0$.
	Now the coefficients of $e_0,e_1,e_{\omega},e_{\ombb}$ in $BC$ are
	respectively $B_0,B_1,B_{\omega},B_{\ombb}$, since $C_0=1$.
	Hence $B=0$. The whiteness relation $AB=c\ovC$ then gives $c=0$.

	Finally apply the elements $N''_x$. Since $a=c=0$ and $B=0$, these
	conditions remain true, while
	\[
		A\mapsto A-\ovC\ovx^{\s},\qquad C\mapsto C+\ovx\ovA.
	\]
	Here $x=t e_0$ changes $A_{-0}$ to $A_{-0}-t^{\s}$, and
	$x=t e_i$ changes $A_i$ to $A_i+t^{\s}$ for
	$i\in\{1,\omega,\ombb\}$. Again $C_0$ and the other three listed
	coefficients are unchanged. Taking successively
	\[
		A_{-0}^{\s}e_0,\quad -A_1^{\s}e_1,\quad
		-A_{\omega}^{\s}e_{\omega},\quad -A_{\ombb}^{\s}e_{\ombb}
	\]
	clears these four coefficients. The relation $CA=0$ then forces
	$A_0=A_{-1}=A_{-\omega}=A_{-\ombb}=0$, by inspecting these same
	coefficients of $CA$. We have reached
	$(0,b,0\mid0,0,C)\in\J_{10}^{abC}$.
\end{proof}

\begin{theorem}
\label{theorem:6_three_orbits}
Assume that the field norm $\NN_{K/F}:K^{\times}\rightarrow F^{\times}$ is
surjective, as in~\eqref{eq:1_norm_surjective}. Then the group
${}^2\SE_{6,K}(F)$ has precisely three orbits on white points. Representatives may be chosen as
\[
\begin{aligned}
	\langle X_1 \rangle &= \langle (0,0,0\mid 0,0,e_0)\rangle,\\
	\langle X_2 \rangle &= \langle (0,0,0\mid 0,0,e_0+\lambda e_1)\rangle,\  \lambda \in K\setminus F,\\
	\langle X_3 \rangle &= \langle (1,0,0\mid 0,0,0)\rangle.
 \end{aligned}
\]
More precisely, the three orbits are:
\begin{enumerate}[(i)]
	\item isotropic white points $\langle X \rangle$ such that $X$ lies in the radical of $\H$
		restricted to the $17$-space $U_X$ determined by $\langle X \rangle$
		(equivalently, $\dim(\rad(\H|_{U_X})) = 9$);
	\item isotropic white points $\langle X \rangle$ such that $X$ does not lie in this radical
		(equivalently, $\dim(\rad(\H|_{U_X})) = 5$);
	\item non-isotropic white points, i.e., $\H(X)\neq 0$.
\end{enumerate}

\end{theorem}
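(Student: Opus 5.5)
By Lemma~\ref{lemma:6_reduce_J10}, every white point is conjugate to a white point $\langle X\rangle$ with $X=(a,b,0\mid 0,0,C)\in\J_{10}^{abC}$. The plan is to use the group generated by the $N_x$ acting on $\J_{10}^{abC}$, which is $\Omega_{10,K}^{-}(F)$ by Theorem~\ref{theorem:5_nx_omega}, together with the results of Section~3. Two things must be checked first.

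\begin{enumerate}[(1)]
\item For $X$ in $\J_{10}^{abC}$ the whiteness conditions of Lemma~\ref{lemma:2_whiteness} reduce to $C\ovC=ab$. The remaining conditions are automatic, since $A=B=c=0$. Thus white vectors in $\J_{10}^{abC}$ are exactly the singular vectors of $Q(a,b,0\mid0,0,C)=ab-C\ovC$.
\item $\J_{10}^{abC}$ carries a $(Q,\B_{\H})$-structure in the sense of Definition~\ref{def:qb_subspace}, with $U=W_{10}$ the $F$-form extending the $W_4$ of Lemma~\ref{lemma:5_omega4minus}. On $U$ the restriction of $Q$ is a $10$-dimensional form of minus type and Witt index $4\geq 2$, and the polar form agrees with $\B_{\H}$ up to the sign convention of that lemma.
\end{enumerate}

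Since $\Omega_{10,K}^{-}(F)$ is realised inside ${}^2\SE_{6,K}(F)$, I would aim to prove that this subgroup alone already acts on singular points of $\J_{10}^{abC}$ with at most the orbits described in Proposition~\ref{prop:3_2forms_orbits} and Lemma~\ref{lemma:3_2forms_nonisotropic}. These results are stated for the full group $G=\GO(U,Q)$. To pass to $\Omega$, I would use the fact that $\Omega(U)$ is transitive on singular vectors of a given type once the Witt index is at least $2$. Alternatively, I would adjoin elements such as $P_u$ and diagonal elements to cover $\GO(U,Q)/\Omega(U)$, checking that these stabilise $\J_{10}^{abC}$ and preserve both forms. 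Either route should show the following.
\begin{itemize}
\item Isotropic white points with a representative in $U$ are conjugate to $\langle X_1\rangle$. Note that $e_0\in U$.
\item The remaining isotropic white points are conjugate to $\langle u+\lambda v\rangle$ for a singular line $\langle u,v\rangle\subseteq U$. Taking $u=e_0$ and $v=e_1$ (placed in the $C$-coordinate, which spans a totally singular line) gives $\langle X_2\rangle$.
\item Non-isotropic white points are conjugate to $\langle X_3\rangle$, because $X_3$ is singular with $\H(X_3)=1$. This is where hypothesis~\eqref{eq:1_norm_surjective} enters, through Lemma~\ref{lemma:3_2forms_nonisotropic}.
\end{itemize}
Hence there are at most three orbits.

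For the lower bound and the characterisations (i)--(iii), I would argue by invariants. Isotropy is a group invariant, since $\H(tX)=tt^{\s}\H(X)$. For isotropic points, the $17$-space $U_X$ is determined by $\langle X\rangle$ via the $\SE_6(K)$-geometry of \cite{BSW1}, equivariantly. Consequently $\dim\rad(\H|_{U_X})$, and whether $X\in\rad(\H|_{U_X})$, are ${}^2\SE_{6,K}(F)$-invariants. The computations preceding the theorem give dimension $9$ with $X_1\in R_1$, and dimension $5$ with $X_2\notin R_2$. These invariants separate the first two orbits. For $X_3$, which is non-isotropic, the orbit is distinct by the first invariant. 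Finally, the equivalence in (i) and (ii) between ``$X\in\rad$'' and the stated radical dimension follows because every isotropic white point lies in the orbit of $X_1$ or $X_2$, and the invariants are constant on orbits.

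I expect the main obstacle to be the passage from $G=\GO(U,Q)$, which Section~3 controls, to the subgroup actually realised inside ${}^2\SE_{6,K}(F)$. Step (2) also needs care: the $F$-structure and sign conventions for $\B_{\H}$ on $\J_{10}^{abC}$ must be checked so that Definition~\ref{def:qb_subspace} genuinely applies. For the passage to $\Omega$, I would first try to invoke the standard fact that $\Omega$ is transitive on nonzero singular vectors, and on orthonormal-type frames, in Witt index $\geq 2$. I would rerun the Witt-extension arguments of Proposition~\ref{prop:3_2forms_orbits} and Lemma~\ref{lemma:3_2forms_nonisotropic}, replacing ``extend to an isometry'' by ``extend to an element of $\Omega$, after composing with a reflection fixing the relevant plane''. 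Such a reflection exists, since the orthogonal complement of a plane still has dimension at least $6$.
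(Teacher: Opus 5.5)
\textbf{Verdict: there is a gap.} Your overall plan matches the paper's proof. You reduce to $\J_{10}^{abC}$, identify white vectors there with singular vectors of $Q=ab-C\ovC$, and use the $F$-form $\{(a,-a^{\s},0\mid0,0,C)\}$; on it $f=-\B_{\H}$, so you either work with $-\B_{\H}$ or rescale by $\alpha$ with $\alpha^{\s}=-\alpha$, as the paper does. You then apply Section~3 and separate the orbits by invariants. The gap is in the step you flag yourself: the passage from $\GO(U,Q)$ to the group actually realised.

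\textbf{Why one reflection is not enough.} Correcting a Witt isometry by ``a reflection fixing the relevant plane'' only adjusts the determinant, or the Dickson invariant in characteristic~$2$. In odd characteristic, $\GO(U,Q)/\Omega(U,Q)$ also records the spinor norm in $F^{\times}/(F^{\times})^2$. For example, the index is $4$ when $F=\Fq$ with $q$ odd. A Witt extension lying in $\SO(U,Q)\setminus\Omega(U,Q)$ is therefore not repaired by one reflection; composing with one only takes it out of $\SO$. Likewise, the standard fact that $\Omega$ is transitive on non-zero singular vectors settles only the type-$1$ case. Types $2$ and $3$ need $\Omega$-transitivity on ordered bases of totally singular $F$-lines, and on pairs with a prescribed non-degenerate Gram matrix. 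That is exactly the point at issue, and the relevant property of the complement is not its dimension.

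\textbf{The fix.} You need the pointwise stabiliser of the target subspace to meet every coset of $\Omega(U,Q)$. The paper takes a hyperbolic plane $L$ orthogonal to the target, which exists because the Witt index is $4$. It then uses
\[
\GO(U,Q)=\GO(L,Q)\,\Omega(U,Q).
\]
This holds because $L$ represents every non-zero scalar, so every reflection is conjugate to one in $\GO(L,Q)$, and conjugates agree modulo $\Omega(U,Q)=[\GO(U,Q),\GO(U,Q)]$. Multiplying the Witt isometry on the right by a suitable element of $\GO(L,Q)$ puts it in $\Omega(U,Q)$ without changing its prescribed values. In general that element is a product of two reflections.

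\textbf{Your alternative route fails.} Adjoining $P_u$ and diagonal elements cannot cover $\GO(U,Q)/\Omega(U,Q)$. The $P_u$ already act inside $\Omega$. The $D_t$ are only similitudes of $Q$, with multiplier $t/t^{\s}$, and do not preserve $U$. Indeed, an element of ${}^2\SE_{6,K}(F)$ inducing an isometry of $Q$ on $\J_{10}^{abC}$ must fix $(0,0,1\mid0,0,0)$. By Theorem~\ref{theorem:7_type3_stabiliser}, such elements induce only $\Omega(U,Q)$.

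\textbf{A smaller point.} Theorem~\ref{theorem:5_nx_omega} gives only a group ``of type'' $\Omega^-_{10,K}(F)$. You still have to check that it is $\Omega(U,Q)$ for your particular $F$-form. The paper does this directly: it identifies $N_{\mu e_i}$ on $U_0$ with the Siegel transformation $\rho_{u_i,v_\mu}$, lets the $P_u$ supply $\Omega(W)$, and invokes \cite[Theorem~11.46]{Taylor}.
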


\begin{proof}
	We have already seen that these three types of white points are distinguished by 
	invariants preserved by ${}^2\SE_{6,K}(F)$, so they lie in different orbits. It remains 
	to show that each type forms a single orbit.

	By Lemma~\ref{lemma:6_reduce_J10}, it is enough to consider white points
	in $V=\J_{10}^{abC}$. On $V$ define
	\[
		Q((a,b,0\mid 0,0,C))=ab-C\ovC,
	\]
	and let $f$ be the polar form of $Q$, so that
	\[
		f((a,b,0\mid 0,0,C),(d,e,0\mid 0,0,F))=ae+bd-\Tr(C\ovF).
	\]
	Let $B=\B_{\H}|_{V}$, i.e.
	\[
		B((a,b,0\mid 0,0,C),(d,e,0\mid 0,0,F))=ad^{\s}+be^{\s}+\Tr(C\ovF^{\s}).
	\]

	Choose $\alpha\in K^{\times}$ such that $\alpha^{\s}=-\alpha$ (in characteristic $2$ one may take $\alpha=1$). Set
	\[
		U=\{(a,a^{\s},0\mid 0,0,\alpha D)\mid a\in K,\ D\in\OO_F\}.
	\]
	Then $U$ is an $F$-subspace of $V$, and $V=U\otimes_F K$.
	Moreover, for $X,Y\in U$ of the form
	\begin{equation*}
		\begin{array}{r@{\;}c@{\;}l}
			X&=&(a,a^{\s},0\mid 0,0,\alpha D),\\
			Y&=&(d,d^{\s},0\mid 0,0,\alpha E)
		\end{array}
	\end{equation*}
	we have
	\[
		f(X,Y)=ad^{\s}+a^{\s}d-\alpha^{2}\Tr(D\ovE)
		 =ad^{\s}+a^{\s}d+\alpha\alpha^{\s}\Tr(D\ovE)
		=B(X,Y).
	\]
	Thus $f_U=B_U$. Also
	\[
		Q_U(X)=aa^{\s}-\alpha^{2}D\ovD,
	\]
	so $Q_U$ is the orthogonal sum of the anisotropic norm plane of $K/F$
	and a split $8$-space. It is non-degenerate and has Witt index $4$.
	Hence $U$ is a $(Q,B)$-subspace of $V$.

	We next exhibit the required orthogonal subgroup of $ {}^2\SE_{6,K}(F)$.
	Put
	\[
		U_0=\alpha^{-1}U
		=\{(a,-a^{\s},0\mid0,0,C):a\in K,\ C\in\OO_F\}.
	\]
	Let $A_0$ be the $2$-dimensional $F$-subspace given by $C=0$,
	and let $W$ be the $8$-dimensional $F$-subspace given by $a=0$.
	Every vector of $U_0$ is uniquely a sum of a vector in $A_0$ and a
	vector in $W$. These subspaces are orthogonal with respect to $f$:
	$f(v,w)=0$ for all $v\in A_0$ and $w\in W$. On $U_0$ the form $Q$ becomes
	$-aa^{\s}-\NN(C)$, so $A_0$ is anisotropic and $W$ is split.
	The elements $P_u$ with $u\in\OO_F$ and $\NN(u)=1$ induce
	$\Omega(W)$ and fix $A_0$ pointwise, by \cite[Section~6]{BSW1}.
	For $u,v\in U_0$ with $Q(u)=f(u,v)=0$, write
	\[
		z^{\rho_{u,v}}=
		z+f(z,v)u-f(z,u)v-Q(v)f(z,u)u.
	\]
	These are the Siegel transformations of the orthogonal space
	\cite[Section~11]{Taylor}. For $\mu\in K$ and $i\in\pm I$,
	substitution in~\eqref{eq:5_nx_image} identifies the action on $U_0$ as
	\[
		N_{\mu e_i}=\rho_{u_i,v_\mu},\qquad
		u_i=(0,0,0\mid0,0,e_i),\quad
		v_\mu=(-\mu^{\s},\mu,0\mid0,0,0).
	\]
	Let $D$ be the group induced on $U_0$ by these elements and the $P_u$.
	Conjugating by $\Omega(W)$ and using
	$\rho_{u,v}\rho_{u,w}=\rho_{u,v+w}$ shows that $D$ contains
	$\rho_{u,v}$ for every non-zero singular $u\in W$ and every $v\in u^\perp$:
	the parameters in $A_0$ come from the displayed $N_{\mu e_i}$,
	and those in $W\cap u^\perp$ already belong to $\Omega(W)$.
	If $z=w+v_0$ is a non-zero singular vector of $U_0$, with
	$w\in W$, $v_0\in A_0$ and $v_0\neq0$, then $w\neq0$.
	Choose a singular $u\in W$ with $r=f(w,u)\neq0$;
	the transformation $\rho_{u,v_0/r}$ removes the $A_0$-component of $z$.
	Thus $D$ is transitive on singular points, since $\Omega(W)$ is so.
	Conjugation now supplies all the Siegel transformations of $U_0$.
	By \cite[Theorem~11.46]{Taylor}, they generate $\Omega(U_0)$.
	Multiplication by $\alpha$ therefore identifies the induced group
	with $\Omega(U,Q)$.

	The orbit arguments in Section~3 use isometries of $U$ prescribed on
	a totally singular subspace of dimension at most $2$, or on a
	non-degenerate plane. Their target subspace is orthogonal to a
	hyperbolic plane $L$ in $U$, since the Witt index is $4$.
	The proof of \cite[Theorem~11.46]{Taylor} gives
	$\GO(U,Q)=\GO(L,Q)\Omega(U,Q)$, where $\GO(L,Q)$ acts trivially on
	$L^\perp$. Multiplying a chosen Witt isometry on the right by a
	suitable element of $\GO(L,Q)$ therefore puts it in $\Omega(U,Q)$
	without changing its prescribed images. Consequently the orbit
	statements of Proposition~\ref{prop:3_2forms_orbits} and
	Lemma~\ref{lemma:3_2forms_nonisotropic} hold for this subgroup as well.

	Now let $X=(a,b,0\mid 0,0,C)\in V$ be white. Then $Q(X)=0$. If $\H(X)=0$, then $X$ is singular isotropic for the pair $(Q,B)$, so by
	Proposition~\ref{prop:3_2forms_orbits} there are exactly two orbits, with representatives
	\[
		\begin{array}{l}
			\langle (0,0,0\mid 0,0,e_0)\rangle,\\
			\langle (0,0,0\mid 0,0,e_0+\lambda e_1)\rangle,
		\end{array}
	\]
	where $\lambda\in K\setminus F$.

	If $\H(X)\neq0$, hypothesis~\eqref{eq:1_norm_surjective} gives
	$t\in K^{\times}$ with $tt^{\s}=\H(X)^{-1}$. Replacing $X$ by $tX$
	does not change its point and gives $\H(tX)=1$.
	This is the norm-surjectivity step in the non-isotropic case.
	Lemma~\ref{lemma:3_2forms_nonisotropic} now shows that all such points form
	a single orbit.
	Since
	\[
		\begin{array}{r@{\;}c@{\;}l}
			Q((1,0,0\mid 0,0,0))&=&0,\\
			\H((1,0,0\mid 0,0,0))&=&1\neq0,
		\end{array}
	\]
	this orbit may be represented by
	\[
		\langle (1,0,0\mid 0,0,0)\rangle.
	\]

	Since ${}^2\SE_{6,K}(F)$ preserves both $\H$ and the associated $17$-space $U_X$,
	the quantities $\dim(\rad(\H|_{U_X}))$ and the property $X\in\rad(\H|_{U_X})$
	are orbit invariants. The characterisation in terms of radicals follows from the computations at the start of
	this section: $X_1$ lies in the $9$-dimensional radical $R_1$ of $\H$ on its $17$-space $U_1$,
	while $X_2$ does not lie in the $5$-dimensional radical $R_2$ of $\H$ on its $17$-space $U_2$.
	To verify $X_2 \notin R_2$, note that $R_2$ has $C$-co\"{o}rdinate in
	$\langle e_0 + \lambda^{\s} e_1\rangle$, whereas $X_2$ has $C = e_0 + \lambda e_1$.
	Since $\lambda \notin F$, these $1$-spaces are distinct, so $X_2 \notin R_2$.
\end{proof}

\section{Stabilisers of white vectors}

We are now interested in the stabiliser in ${}^2\SE_{6,K}(F)$ of 
$X_3 = (0,0,1\mid 0,0,0)$, which is non-isotropic. As we know, the elements
$N_x$ with $x\ovx^{\s} = \ovx^{\s}x = 0$ preserve $X_3$ (\ref{eq:5_nx_image}).
We now prove the following theorem.
\begin{theorem}
	\label{theorem:7_type3_stabiliser}
	The stabiliser in ${}^2\SE_{6,K}(F)$ of $X_3 = (0,0,1\mid 0,0,0)$ is a subgroup of shape
	\begin{equation}
		\Spin_{10,K}^{-}(F).
	\end{equation}
\end{theorem}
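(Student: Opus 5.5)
The plan is to work inside the split group. By \cite{BSW1}, the stabiliser in $\SE_6(K)$ of $(0,0,1\mid0,0,0)$ is $\Spin_{10}^+(K)$. It acts faithfully on the $16$-space $\J_{16}=\{(0,0,0\mid A,B,0)\}$ as a half-spin module, and on the $10$-space $\J_{10}^{abC}$ as $\Omega_{10}^+(K)$ with kernel the centre. The stabiliser of $X_3$ in ${}^2\SE_{6,K}(F)$ is therefore the subgroup $S$ of this $\Spin_{10}^+(K)$ preserving $\H$. We identify $S$ in three steps: (a) $S$ contains the group generated by the $N_x$ together with the elements $P_u$, $u\in\OO_F$, $\NN(u)=1$; (b) this group maps onto the full image of $S$ on $\J_{10}^{abC}$; (c) the kernel of $S$ on $\J_{10}^{abC}$ is as expected for a spin group.

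For step (a): the $N_x$ and the $P_u$ fix $X_3$ by~\eqref{eq:5_nx_image} and the formula for $P_u$, and they preserve $\H$ by Proposition~\ref{prop:5_nx_2se} and Lemma~\ref{lemma:5_pu_hermitean}. Let $S_0$ be the group they generate. By Theorem~\ref{theorem:5_nx_omega}, and the argument in the proof of Theorem~\ref{theorem:6_three_orbits}, $S_0$ induces $\Omega(U,Q)\cong\Omega_{10,K}^-(F)$ on $\J_{10}^{abC}=U\otimes_FK$. For step (b): any element of $S$ preserves $\H$, and $\B_\H$ restricts to the form $B$ of that proof, because $\J_{10}^{abC}$ is orthogonal under $\B_\H$ to $\J_{16}\oplus\langle X_3\rangle$. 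It also preserves $Q$, since the quadratic form on $\J_{10}^{abC}$ is induced by $M(X_3,\cdot)$. Proposition~\ref{prop:3_2forms} then shows that its image lies in $\GO(U,Q)$. Moreover, the image of $\Spin_{10}^+(K)$ on the $10$-space is $\Omega_{10}^+(K)$. Intersecting, the image of $S$ lies in $\GO(U,Q)\cap\Omega_{10}^+(K)$.

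The main obstacle is showing that $\GO(U,Q)\cap\Omega_{10}^+(K)=\Omega(U,Q)$, that is, that no element of the real form with non-trivial spinor norm or Dickson invariant over $F$ lies in $\Omega_{10}^+(K)$. For the spinor norm, I would first observe that elements of $\Omega(U,Q)$ have spinor norm in $(F^\times)^2$, while $\Omega_{10}^+(K)$ requires norms in $(K^\times)^2$. Since $F^\times\cap(K^\times)^2$ is strictly bigger than $(F^\times)^2$, this does not exclude everything, and a discrepancy of index at most $2$ may remain. To resolve it, I would use the explicit reflection in the anisotropic vector $(1,-1,0\mid0,0,0)$ of $A_0$ and test whether it lifts to $\Spin_{10}^+(K)$ preserving $\H$. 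If it does, the image is $\SO(U,Q)\cap\Omega_{10}^+(K)$, which I expect to be the ``shape'' the paper means by $\Spin^-_{10,K}(F)$. Either way, the structure is that of a spin-type cover of the $\Omega$-group of $U$. Determinant and Dickson invariant are handled by the same test.

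Step (c) concerns the kernel. The kernel of $\Spin_{10}^+(K)$ on $\J_{10}^{abC}$ is its centre, acting on $\J_{16}$ as scalars $\zeta$ with $\zeta^4=1$ (in $\SE_6$ these are $\diag$-type elements scaling $A,B$). Preserving $\H$ on $\J_{16}$ forces $\zeta\zeta^\s=1$. Comparing with the centre of the twisted spin group shows that $S$ is a central extension of $\Omega(U,Q)$ by exactly the kernel expected. Finally, I would check that $S_0$ already contains this kernel: the $N_x$ generate a perfect group acting on $\J_{16}$ as the half-spin module restricted to $\Omega^-_{10,K}(F)$, and that action is a genuine non-split cover. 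Hence $S=S_0\cong\Spin_{10,K}^-(F)$.
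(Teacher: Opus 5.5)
There is a genuine gap at the step you yourself flag as the main obstacle, and your proposed way round it does not close it. The identity $\GO(U,Q)\cap\Omega_{10}^+(K)=\Omega(U,Q)$ is false in general, so no argument carried out on $\J_{10}^{abC}$ alone can finish the proof.

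Here is an explicit element that such an argument cannot exclude. Work in characteristic $\neq2$ and take $\beta\in K^\times$ with $\beta\beta^\s=-1$; it exists by \eqref{eq:1_norm_surjective}. Consider the map $(a,b,c\mid A,B,C)\mapsto(\beta^2a,\beta^{-2}b,c\mid\beta^{-1}A,\beta B,C)$.
\begin{itemize}
\item It preserves $\fdet$ and fixes $X_3$, so it lies in the split $\Spin_{10}^+(K)$.
\item It preserves $Q$ and $\H$ on $\J_{10}^{abC}$.
\item On $V$ it acts as multiplication by $\gamma=\beta^2$ on the norm plane $A_0$.
\end{itemize}
Write $\gamma=w/w^\s$ with $w=\beta\alpha$, where $\alpha^\s=-\alpha$. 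Then its spinor norm is $\NN_{K/F}(w)=\alpha^2\notin(F^\times)^2$. So its image lies in $\GO(U,Q)\cap\Omega_{10}^+(K)$ but not in $\Omega(U,Q)$. For $F=\Fq$ with $q$ odd, in fact all of $\SO^-_{10}(q)$ lies in $\Omega^+_{10}(q^2)$.

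What excludes this element is $\H$ on $\J_{16}^{AB}$. Neither it nor its product with $P_{-1}$ preserves $\H$ on $\J_8^B$, because $\NN_{K/F}(\pm\beta)=-1$. Your proposed test cannot detect this. A reflection never lies in $\Omega_{10}^+(K)$: it has determinant $-1$, or Dickson invariant $1$ in characteristic $2$. So there is no lift to test. Your fallback, "image $\SO(U,Q)\cap\Omega^+_{10}(K)$, either way a spin-type cover", would give a group whose image has index $2$ over $\Omega(U,Q)$. That is not the asserted $\Spin^-_{10,K}(F)$.

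The missing ingredient is the paper's reduction, which uses the $16$-space:
\begin{enumerate}
\item By \eqref{eq:1_norm_surjective}, $A_0$ represents every element of $F^\times$. By Witt's theorem, every reflection of $V$ is therefore conjugate to one in $A_0$. Conjugates agree modulo $\Omega(V,Q)$, so $\GO(V,Q)=\GO(A_0,Q)\Omega(V,Q)$.
\item Multiplying $g\in G$ by an element of your $S_0$ then gives $s$ acting on $V$ through $\GO(A_0,Q)$.
\item Such an $s$ fixes $\J_8^C$ pointwise. It cannot interchange $\J_1^a$ and $\J_1^b$, by Lemma~9.5(b) of \cite{BSW1}.
\item The calculation of Lemma~9.5(a) then shows that $s$ acts on all of $\J$ as the map displayed above, for some $\beta\in K^\times$.
\item Preservation of $\H$ on $\J_8^B$ forces $\beta\beta^\s=1$. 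So the action of $s$ on $V$ is the square of the isometry $a\mapsto\beta a$ of $A_0$, and hence lies in $\Omega(V,Q)$.
\end{enumerate}

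Separately, step (c) misidentifies the kernel. An element fixing $X_3$ and acting as $\zeta$ on $\J_{16}^{AB}$ acts as $\zeta^{-2}$ on $\J_{10}^{abC}$. So the kernel on $\J_{10}^{abC}$ is just $\{P_1,P_{-1}\}$, as in Lemma~9.5(c) of \cite{BSW1}, not a centre of order $4$. Moreover, $P_{-1}$ is already one of your generators $P_u$, so no appeal to a non-split cover is needed.
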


\begin{proof}
	Let $G$ be the stabiliser of $p=X_3$, and put $W=\J_{10}^{abC}$.
	For $X=(a,b,c\mid A,B,C)$, since $G$ fixes $p$ and preserves $\fdet$,
	it preserves
	\[
		\fdet(X+p)-\fdet(X)=ab-\NN(C).
	\]
	The radical of this quadratic form is $U_3=\J_{17}^{cAB}$.
	Since $G$ also preserves $\B_{\H}$, it stabilises
	$W=U_3^\perp$ and $\J_{16}^{AB}=U_3\cap p^\perp$.
	Thus $G$ preserves the decomposition
	\[
		\J=Kp\oplus\J_{16}^{AB}\oplus W.
	\]
	On $W$ write $Q=ab-\NN(C)$, and let $f$ be its polar form.

	Consider the $F$-subspace
	\[
		V=\{(a,-a^{\s},0\mid0,0,C):a\in K,\ C\in\OO_F\}.
	\]
	Then $W=V\otimes_F K$, and direct calculation gives
	\[
		Q((a,-a^{\s},0\mid0,0,C))=-aa^{\s}-\NN(C),
		\qquad f|_V=-\B_{\H}|_V.
	\]
	In particular, $V$ is a $(Q,-\B_{\H})$-subspace: its quadratic form
	is the orthogonal sum of the anisotropic norm plane $A_0$ given by
	$C=0$ and the split $8$-space $W_0$ given by $a=0$.
	Proposition~\ref{prop:3_2forms}, applied to $-\B_{\H}$, therefore
	shows that $G$ preserves $V$ and induces a subgroup of $\GO(V,Q)$.

	Let $L$ be the subgroup generated by the elements $P_u$, with
	$u\in\OO_F$ and $\NN(u)=1$, and the elements $N_{\mu e_i}$, with
	$\mu\in K$ and $i\in\pm I$. These elements fix $p$, so $L\leqslant G$.
	The proof of Theorem~\ref{theorem:6_three_orbits} shows that their
	induced group on $V$ is $\Omega(V,Q)=\Omega_{10,K}^{-}(F)$.
	Moreover, Lemma~9.5(c) of \cite{BSW1} shows that an element of $G$
	acting trivially on $W$ is $P_1$ or $P_{-1}$. Both belong to $L$;
	in characteristic $2$ they coincide. Thus it remains to show that
	every element of $G$ induces an element of $\Omega(V,Q)$.

	Here we use hypothesis~\eqref{eq:1_norm_surjective}.
	Since $Q|_{A_0}=-\NN_{K/F}$, the plane $A_0$ represents every
	non-zero element of $F$. Consequently
	\[
		\GO(V,Q)=\GO(A_0,Q)\Omega(V,Q),
	\]
	where $\GO(A_0,Q)$ acts trivially on $W_0$.
	Indeed, $\GO(V,Q)$ is generated by reflexions
	\cite[Corollary~11.42]{Taylor}. For every non-singular $v\in V$,
	choose $w\in A_0$ with $Q(w)=Q(v)$. Witt's theorem conjugates the
	reflexion in $v$ to the reflexion in $w$. Conjugate elements have
	the same image modulo the commutator subgroup
	$\Omega(V,Q)$ \cite[Theorem~11.45]{Taylor}, so each generating reflexion lies in
	$\GO(A_0,Q)\Omega(V,Q)$.

	Now take any $g\in G$. Multiplying $g$ on the right by a suitable
	element of $L$, we obtain $s\in G$ whose action on $V$ lies in
	$\GO(A_0,Q)$. Its $K$-linear action on $W$ fixes $\J_8^C$
	pointwise and preserves the form $ab$ on $\J_1^a\oplus\J_1^b$.
	It therefore either fixes or interchanges the two singular
	$1$-spaces $\J_1^a$ and $\J_1^b$.
	Lemma~9.5(b) of \cite{BSW1} excludes the interchange.
	The calculation in the proof of Lemma~9.5(a) gives the whole
	action of $s$, not just its action on $W$:
	\[
		(a,b,c\mid A,B,C)^s
		=(\beta^2a,\beta^{-2}b,c\mid\beta^{-1}A,\beta B,C)
	\]
	for some $\beta\in K^\times$.
	Preservation of the non-degenerate sesquilinear form on $\J_8^B$
	now gives $\beta\beta^{\s}=1$.
	Hence the map
	\[
		(a,-a^{\s},0\mid0,0,C)
		\longmapsto(\beta a,-(\beta a)^{\s},0\mid0,0,C)
	\]
	is an isometry of $V$, and its square is the action of $s$.
	Every square in $\GO(V,Q)$ belongs to $\Omega(V,Q)$:
	the quotient by the commutator subgroup is abelian and is
	generated by the images of involutory reflexions.
	Thus $s$, and hence $g$, induces an element of $\Omega(V,Q)$.
	Since $L$ supplies every such action and contains the kernel,
	we conclude that $G=L$.

	Finally, we identify this group with the spin group, using the
	split spin action on $\J$ constructed in \cite[Theorem~9.6]{BSW1}.
	The generators of $L$ belong to that copy of $\Spin_{10}^{+}(K)$:
	for $x=\mu e_i$ we have $N_x=M_xL_{-\ovx^{\s}}$, and the $P_u$
	belong to the same group. The equality $W=V\otimes_F K$ embeds
	the spin group of $(V,Q)$ in this split spin group, compatibly
	with its action on $W$. Its image on $V$ is $\Omega(V,Q)$,
	and its kernel is $\{P_1,P_{-1}\}$. Thus it consists of all the
	lifts of $\Omega(V,Q)$ in this same split spin group.
	We have proved that $G=L$ has exactly these lifts. Therefore
	\[
		G\cong\Spin_{10,K}^{-}(F),
	\]
	with $\Spin_{10,K}^{-}(F)\cong\Omega_{10,K}^{-}(F)$ in
	characteristic $2$.
\end{proof}

We are ready to investigate the stabiliser in ${}^2\SE_{6,K}(F)$ of 
\mbox{$X_1 = (0,0,0\mid 0,0,e_0)$}. From the previous section we know that the stabiliser of 
$X_3$ is a subgroup of shape $\Spin_{10,K}^{-}(F)$. By stabilising $X_3$ and $X_1$ simultaneously,
with the help of Lemma~2.1 in \cite{BSW1}, we find that their joint stabiliser is a group
of shape $F^8\cn\Spin_{8,K}^{-}(F)$.

To find the rest of the stabiliser, we consider the elements $N_{\mu x}$,
$N_{\mu x}'$, and $N_{\mu x}''$.  The ones that preserve $X_1$ but
need not preserve $X_3$ are
\begin{equation*}
	N_{\mu x}'\quad\hbox{with}\quad
	x\in\{e_{\bar{\omega}},e_\omega,e_{-0},e_1\},
\end{equation*}
and
\begin{equation*}
	N_{\nu y}''\quad\hbox{with}\quad
	y\in\{e_{-1},e_{-0},e_{-\omega},e_{-\bar{\omega}}\},
\end{equation*}
where $\mu,\nu\in K$.  Denote by $H$ the subgroup of
${}^2\SE_{6,K}(F)$ generated by the actions of all these elements on
$\J$.  Their action on $X_3$ is given by
\begin{equation}
	\begin{array}{r@{\;}c@{\;}l}
		N_{\mu x}': (0,0,1\mid 0,0,0) & \mapsto & (0,0,1\mid -\mu^{\s} x, 0, 0), \\
		N_{\nu y}'': (0,0,1\mid 0,0,0) & \mapsto & (0,0,1\mid 0, \nu y, 0).  
	\end{array}
\end{equation}
Consider the subspace $U_1^{\perp}$, where $\perp$ is taken with respect to
the sesquilinear form $\B_{\H}$. We find that $U_1^{\perp}$ is $10$-dimensional and
spanned by the vectors $(0,0,c\mid A,B,C)$ with
\begin{equation}
	\begin{array}{r@{\;}c@{\;}l}
		A & \in & \langle e_{\bar{\omega}}, e_{\omega}, e_{-0}, e_1 \rangle, \\
		B & \in & \langle e_{-1}, e_{-0}, e_{-\omega}, e_{-\bar{\omega}} \rangle, \\
		C & \in & \langle e_0 \rangle.
	\end{array}
\end{equation}
We first show that $U_1^{\perp}$ is an image in $\SE_6(K)$
of $\J_{10}^{abC}$, so that we can use the geometry 
described in \cite[Section 1]{BSW1} to pin down
the stabiliser of type $1$ vector to a certain subgroup.

Consider an element $M$ of $\SE_6(K)$, defined as
\begin{equation*}
	M = \begin{bmatrix}[c]
		- e_{-0} & 1 & 0 \\
		0 & 0 & 1 \\
		e_0 & e_{-0} & 0
	\end{bmatrix}.
\end{equation*}
It is straightforward to see that $M$ indeed preserves
the Dickson--Freudenthal determinant. The action of
$M$ on an arbitrary vector $(a,b,0\mid 0,0,C) \in \J_{10}^{abC}$
is given by
\begin{equation*}
	\begin{array}{r@{\;}c@{\;}l}
		(a,b,0\mid 0,0,C) & \mapsto & (0,a,b \mid A, B, -a e_0),\\
	\end{array}
\end{equation*}
where 
\begin{equation*}
	\begin{array}{r@{\;}c@{\;}l}
			A & \in & \langle e_{-1}, e_{\bar{\omega}},
						e_{\omega}, e_0 \rangle, \\
		B & \in & \langle e_{-1}, e_{-0}, e_{-\omega}, 
						e_{-\bar{\omega}} \rangle.	
	\end{array}
\end{equation*}
Acting further by $M_{e_{-0}}$, we obtain a vector
$(0,0,b \mid \hat{A}, \hat{B}, -a e_0)$ with
\begin{equation*}
	\begin{array}{r@{\;}c@{\;}l}
		\hat{A} & \in & \langle e_{\bar{\omega}}, e_{\omega}, e_{-0}, e_1 \rangle, \\
		\hat{B} & \in & \langle e_{-1}, e_{-0}, e_{-\omega}, e_{-\bar{\omega}} \rangle.
	\end{array}
\end{equation*}
In other words, we have shown that $U_1^{\perp}$ is indeed
an image in $\SE_6(K)$ of $\J_{10}^{abC}$. 
Now, $X_1 \in U_1^{\perp}$, so the
stabiliser of $\langle X_1 \rangle$ is no bigger than  
\mbox{$K^{16}\cn K^8 \cn \Spin_8^+(K).K^{\times}.K^{\times}$}.

Note that $X_3 \in U_1^{\perp}$, so the stabiliser in 
${}^2\SE_{6,K}(F)$ sends $X_3$ to some vector
in $U_1^{\perp}$. 
The actions on $U_1^{\perp}$ of the elements $N_{\mu x}'$ and $N_{\nu y}''$ from above are given
by
\begin{equation}
	\begin{array}{r@{\;}c@{\;}l}
		N_{\mu x}' : (0,0,c\mid A,B,C) & \mapsto & 
			(0,-\mu^{\s}\Tr(A\ovx^{\s}), c+\mu\Tr(\ovA x) \mid \\
		& & \mid A - \mu\mu^{\s} x^{\s} \ovA x - \mu^{\s} c x^{\s}, \\
		& &		 B + \mu\ovx\ovC, C - \mu^{\s} \ovB \ovx^{\s}), \\
				 
		N_{\nu y}'' : (0,0,c\mid A,B,C) & \mapsto & 
			(\nu\Tr(\ovB y), 0, c-\nu^{\s}\Tr(B\ovy^{\s}) \mid \\
		& & \mid A - \nu^{\s}\ovC\ovy^{\s}, \\
		& & B - \nu\nu^{\s} y^{\s}\ovB y + \nu c y, C + \nu\ovy\ovA). 
	\end{array}
\end{equation}
Since $x \in \{e_{\bar{\omega}}, e_{\omega}, e_{-0}, e_1\}$, we find $\Tr(A\ovx^{\s}) = 
\Tr(\ovA x) = 0 = x^{\s} \ovA x = \ovx \ovC$, and also $x^{\s} = x$, $\ovB \ovx^{\s} \in 
\langle e_0 \rangle$. Similarly, $\Tr(\ovB y) = \Tr(B\ovy^{\s}) = 0 = y^{\s} \ovB y = \ovC \ovy^{\s}$,
and $y^{\s} = y$, $\ovy \ovA \in \langle e_0 \rangle$ as $y \in \{ e_{-1}, e_{-0}, e_{-\omega}, 
e_{-\bar{\omega}} \}$. After these simplifications the action takes the form
\begin{equation}
	\begin{array}{r@{\;}c@{\;}l}
		N_{\mu x}' : (0,0,c\mid A,B,C) & \mapsto &
			(0,0,c\mid A  - \mu^{\s} c x, B, C - \mu^{\s}\ovB\ovx ), \\
		
		N_{\nu y}'' : (0,0,c\mid A,B,C) & \mapsto &
			(0,0,c\mid A, B + \nu c y, C + \nu \ovy \ovA).
	\end{array}
\end{equation}
The value of $\H$ on $U_1^{\perp}$ is given by
\begin{equation}
	\H((0,0,c\mid A,B,C)) = c c^{\s},
\end{equation}
so we conclude that the elements of the stabiliser of $X_1$, 
which do not preserve $X_3$, 
send $X_3$ to a vector of the form $(0,0,c\mid A,B, \ovB \ovA 
c^{-1})$, where $c$ is such that $c c^{\s} = 1$. In particular,
$c \neq 0$, so we can repeatedly use the elements $N_{\mu x}'$
and $N_{\nu y}''$ which preserve $X_1$ to map any vector
$(0,0,c\mid A,B, \ovB \ovA c^{-1})$ to $(0,0,c \mid 0,0,0)$,
whose span is $\langle X_3\rangle$.  Thus $H$ is transitive on the
corresponding $1$-spaces.

Denote by $G_1$ the stabiliser of $X_1$ and by $G_{13}$ the
joint stabiliser of $X_1$ and $X_3$. Let $\mathcal{O}$ and
$\hat{\mathcal{O}}$ be
the following sets:
\begin{equation}
	\begin{array}{r@{\;}c@{\;}l}
	\mathcal{O} & = & 
	\left\{\ W \in U_1^{\perp}\ \big|\ \mbox{$W$ is white and
		$\H(W)=1$ } \right\}, \\
		
	\hat{\mathcal{O}} & = & \left\{ 
		\ \langle W \rangle\ \big|\ W \in \mathcal{O}\ 
	\right\}.
	\end{array}
\end{equation}
Further, denote by $\hat{G}_{13}$ the joint stabiliser
of $X_1$ and $\langle X_3 \rangle$. Now, $\hat{\mathcal{O}}$
is $G_1$-invariant, and the subgroup $H \leqslant G_1$
acts transitively on it, which means 
$\langle X_3 \rangle^{G_1} = \langle X_3 \rangle^H$.

We first note that $\hat G_{13}=G_{13}$.  Put
\begin{equation*}
	 k_1=M M_{e_{-0}},
\end{equation*}
so that $\bigl(\J_{10}^{abC}\bigr)^{k_1}=U_1^{\perp}$.  Direct calculation gives
\begin{equation*}
	 p:=X_1^{k_1^{-1}}=(-1,0,0\mid0,0,0),\qquad
	 q:=X_3^{k_1^{-1}}=(0,1,0\mid0,0,0).
\end{equation*}
We shall use one more co\"{o}rdinate of this calculation.  If
$X=(a,b,c\mid A,B,C)$ and $B=\sum_{i\in\pm I}\beta_i e_i$, write
\begin{equation*}
	 X^M=(a_M,b_M,c_M\mid A_M,B_M,C_M),\qquad
	 X^{k_1}=(a_1,b_1,c_1\mid A_1,B_1,C_1).
\end{equation*}
Using $X^M=\ovM^{\T} X M$, we obtain
\begin{equation*}
	 a_M=-\beta_{-0},
\end{equation*}
and the $e_{-0}$-co\"{o}rdinate of $C_M$ is
$c+\beta_{-0}$.
The element $M_{e_{-0}}$ replaces $C$ by $C+a e_{-0}$, and hence
the $e_{-0}$-co\"{o}rdinate of $C_1$ is
$c+\beta_{-0}-\beta_{-0}=c$.

Transport the sesquilinear form by $k_1$, putting
\begin{equation*}
	 \B_1(Y,Z)=\B_{\H}(Y^{k_1},Z^{k_1}).
\end{equation*}
By (\ref{eq:4_B_sesquilinear}) and the trace pairing, in which
$e_0$ pairs only with $e_{-0}$, we have
\begin{equation}
	 \label{eq:7_type1_k1_pairing}
	 \B_1(p,X)=\B_{\H}(X_1,X^{k_1})
	 =\Tr\!\left(e_0\ovC_1^{\s}\right)=c^{\s}.
\end{equation}

Let $\hat g\in\hat G_{13}$ and put $g=k_1 \hat{g} k_1^{-1}$.
Then $p^{g}=p$ and $q^{g}=\lambda q$ for some
$\lambda\in K^{\times}$, while $g$ preserves $\B_1$ and
$\fdet$.  For an arbitrary $X=(a,b,c\mid A,B,C)$, write
\begin{equation*}
	 X^{g}=(a',b',c'\mid A',B',C').
\end{equation*}
It follows from (\ref{eq:7_type1_k1_pairing}) that
\begin{equation*}
	 (c')^{\s}=\B_1(p,X^{g})
	 =\B_1(p^g,X^{g})
	 =\B_1(p,X)=c^{\s}.
\end{equation*}
Thus
\begin{equation}
	 \label{eq:7_type1_c_fixed}
	 c'=c.
\end{equation}

Finally, direct substitution in the Dickson--Freudenthal determinant
gives
\begin{equation*}
	 \fdet(X+p+q)-\fdet(X+p)-\fdet(X+q)+\fdet(X)=-c.
\end{equation*}
Indeed, only the term $abc$ contains both the change in the
$a$-co\"{o}rdinate and the change in the $b$-co\"{o}rdinate.
Replacing $q$ by $\lambda q$ multiplies the right-hand side by
$\lambda$.  Therefore preservation of $\fdet$ gives
\begin{align*}
	 -c
	 &=\fdet(X^{g}+p+\lambda q)-\fdet(X^{g}+p)\\
	 &\hspace{18mm}-\fdet(X^{g}+\lambda q)+\fdet(X^{g})\\
	 &=-\lambda c'=-\lambda c,
\end{align*}
where the last equality follows from (\ref{eq:7_type1_c_fixed}).
Taking, for example, $X=(0,0,1\mid0,0,0)$ gives $\lambda=1$.
Thus $g$ fixes $q$, and hence $\hat{g}$ fixes $X_3$.  This
proves that $\hat G_{13}=G_{13}$. Notice that the argument is also 
valid in characteristic $2$.

Put $Z=H\cap G_{13}$, and let $Z_0$ be the subgroup of $H$ generated by
all commutators
\begin{equation*}
	 [N_{\lambda x}',N_{\mu y}'],\qquad
	 [N_{\lambda x}',N_{\mu y}''],\qquad
	 [N_{\lambda x}'',N_{\mu y}'']
\end{equation*}
with $x$ and $y$ in the appropriate displayed sets.  Direct substitution
in (\ref{eq:5_nx_image}) shows that every such commutator lies in $Z$ and
commutes with each of the eight displayed families.  Since these families
generate $H$, we have
\begin{equation*}
	 Z_0\leqslant Z\cap Z(H).
\end{equation*}
The formulae above show that each family is additive.  Since the eight
families commute modulo $Z_0$, collecting the factors shows that every
element of $H$ can be written as
\begin{equation*}
	 z\prod_{i=1}^{4}N'_{\mu_i x_i}
	       \prod_{j=1}^{4}N''_{\nu_j y_j},
	 \qquad z\in Z_0,
\end{equation*}
where the $x_i$ and $y_j$ run through the two displayed sets of four
basis elements.  Since $H\leqslant G_1$, an element of $H$ belongs to
$Z$ precisely when it fixes $X_3$.  Since $z\in Z$, it fixes $X_3$.
The image of $X_3$ under the ordered product has respectively
\begin{equation*}
	 -\sum_{i=1}^{4}\mu_i^{\s}x_i
	 \quad\hbox{and}\quad
	 \sum_{j=1}^{4}\nu_jy_j
\end{equation*}
as its $A$- and $B$-co\"{o}rdinates.  The $x_i$ and $y_j$ are linearly
independent, so an element in the displayed form belongs to $Z$ if and
only if all eight parameters vanish.  It follows that $Z=Z_0$ and that
\begin{equation*}
	 H/Z_0\cong(K,+)^8.
\end{equation*}
Now $Z_0\leqslant[H,H]$ because $Z_0$ is generated by commutators, while
$[H,H]\leqslant Z_0$ because $H/Z_0$ is abelian.  Hence $Z_0=[H,H]$.
For example, with respect to the basis
\begin{equation}
	\begin{array}{r@{\;}c@{\;}l}
		w_1 & = & (0,0,0 \mid 0,0,e_0), \\
		w_2 & = & (0,0,0 \mid 0,0,e_{\omega}), \\
		w_3 & = & (0,0,0 \mid 0,0,e_{\bar{\omega}}), \\
		w_4 & = & (0,0,0 \mid 0,0,e_{-1}), \\
		w_5 & = & (1,0,0 \mid 0,0,0), \\
		w_6 & = & (0,1,0 \mid 0,0,0), \\
		w_7 & = & (0,0,0 \mid 0,0,e_1), \\
		w_8 & = & (0,0,0 \mid 0,0,e_{-\bar{\omega}}),\\ 
		w_9 & = & (0,0,0 \mid 0,0,e_{-\omega}), \\
		w_{10} & = & (0,0,0 \mid 0,0,e_{-0}), \\
	\end{array}
\end{equation}
the commutator $[N_{\lambda e_{\bar{\omega}}}', 
N_{\mu e_{\omega}}']$ is represented by the matrix 
\begin{equation}
\II_{10} + (\lambda \mu^{\s} + \lambda^{\s}\mu)E_{7,1} 
- (\lambda \mu^{\s} + \lambda^{\s}\mu) E_{10,4},
\end{equation}
where 
$E_{i j}$ as usual denotes the matrix with $1$ in the row $i$
and column $j$ and zeroes in any other position. As we know,
$\lambda \mu^{\s} + \lambda^{\s}\mu
=\Tr_{K/F}(\lambda\mu^{\s})\in F$.  Running through all possible pairs
shows that $Z_0$ has precisely eight independent $F$-parameters.  Consequently
\begin{equation*}
	 Z=Z_0=[H,H]\cong(F,+)^8\leqslant Z(H).
\end{equation*}
We have therefore obtained the exact sequence
\begin{equation}
	 \label{eq:7_type1_kernel}
	 1\longrightarrow(F,+)^8\longrightarrow H\longrightarrow(K,+)^8
	 \longrightarrow1.
\end{equation}
Since $K/F$ is separable, $\Tr_{K/F}$ is onto, also in characteristic
$2$, and the displayed commutator is non-trivial for suitable
$\lambda,\mu$.  Thus $H$ is non-abelian.  If
(\ref{eq:7_type1_kernel}) split, then centrality of $Z$ would give the
abelian direct product $H\cong Z\times(K,+)^8$, a contradiction.  Hence
$H$ has non-split shape $F^8.K^8$.

The joint-stabiliser calculation identifies $Z$ with the normal $F^8$
in $G_{13}=Z\rtimes L$, where $L\cong\Spin_{8,K}^{-}(F)$.
The equality of the orbits of $\langle X_3\rangle$, together with
$\hat G_{13}=G_{13}$, gives $G_1=G_{13}H$.  Hence $G_1=HL$ and
$H\cap L=1$.  Since $Z$ is normal in $G_{13}$ and central in $H$,
it is normal in $G_1$.  Let $T$ be the subgroup of $G_1$ consisting
of the elements that commute with every element of $Z$; then
$T\lhd G_1$.  By the matrices in the proof of
\cite[Lemma~2.1]{BSW1}, $L$ acts on the eight parameters of $Z$
through its natural orthogonal action.  This action has trivial kernel
in characteristic $2$, and kernel of order $2$ otherwise.  Since
$G_1=HL$ and $H$ centralises $Z$, it follows that $T=H$ in
characteristic $2$, while $H$ has index $2$ in $T$ otherwise.  In the
latter case every square in $T$ belongs to $H$, and every displayed
generator of $H$ is a square in its additive family.  Thus $H$ is
precisely the subgroup generated by the squares in $T$.  Conjugation
by $G_1$ preserves $T$ and therefore preserves this subgroup.
Consequently $H\lhd G_1$ in every characteristic, and
$G_1=H\rtimes L$.
We have proved the following theorem.
\begin{theorem}
	\label{theorem:7_type1_stabiliser}
	The stabiliser in ${}^2\SE_{6,K}(F)$ of 
	$X_1 = (0,0,0\mid 0,0,e_0)$ 
	is a group of shape
	\begin{equation}
		(F^{8}.K^{8})\cn\Spin_{8,K}^{-}(F).
	\end{equation}
	Here the extension $F^8.K^8$ is non-split, whereas the displayed
	outer extension by $\Spin_{8,K}^{-}(F)$ is split.
\end{theorem}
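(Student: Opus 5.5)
The theorem summarises the argument just completed, so the plan is to assemble the pieces already established in the order they were obtained. The first step is to work with the joint stabiliser $G_{13}$ of $X_1$ and $X_3$. This lies inside the stabiliser of $X_3$, which is $\Spin_{10,K}^{-}(F)$ by Theorem~\ref{theorem:7_type3_stabiliser}. Within the $10$-space $V$ of that proof, $X_1$ is a singular isotropic vector lying in $V$ itself. Its stabiliser in $\Omega(V,Q)$ is therefore the parabolic subgroup $F^8\cn\Omega_8^{-}$. Lifting to the spin group, and using the matrices of \cite[Lemma~2.1]{BSW1}, gives $G_{13}=Z\rtimes L$ with $Z\cong F^8$ and $L\cong\Spin_{8,K}^{-}(F)$.

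The second step is to compare $G_1$ with $G_{13}$ via the orbit of $\langle X_3\rangle$. I would use three facts:
\begin{enumerate}[(i)]
	\item $G_1$ preserves $U_1^\perp$ and the set $\hat{\mathcal O}$ of points spanned by white vectors $W\in U_1^\perp$ with $\H(W)=1$;
	\item the subgroup $H$ generated by the $N'_{\mu x}$ and $N''_{\nu y}$ acts transitively on $\hat{\mathcal O}$;
	\item $\hat G_{13}=G_{13}$, which was proved by conjugating with $k_1$ and using the pairing \eqref{eq:7_type1_k1_pairing} together with the second mixed difference of $\fdet$.
\end{enumerate}
The Frattini-type argument then gives $G_1=G_{13}H=HL$. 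The earlier collection argument identified $H\cap G_{13}=Z=[H,H]\cong F^8$, and showed that $H/Z\cong K^8$ via the $A$- and $B$-coordinates of the image of $X_3$. From this I expect $H\cap L=1$. The extension $F^8.K^8$ is non-split: $Z$ is central, so a splitting would make $H$ abelian, which contradicts the explicit non-trivial commutator with entry $\Tr_{K/F}(\lambda\mu^\s)$.

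The main obstacle is showing that $H\lhd G_1$, which is needed to make the outer extension a semidirect product. First, $Z$ is normal in $G_1$, being normal in $G_{13}$ and central in $H$. Hence its centraliser $T$ is also normal in $G_1$. Next, $L$ acts on $Z$ through the natural orthogonal action of $\Spin_8$, whose kernel is $\{\pm1\}$ in odd characteristic and trivial in characteristic $2$. It follows that $T=H$ in characteristic $2$, and that $|T:H|=2$ otherwise. In the latter case I would characterise $H$ as the subgroup generated by the squares in $T$. This works because each generator of $H$ is a square in its additive one-parameter family, and because squares of $T$ lie in the index-$2$ subgroup $H$. That characteristic description makes $H$ normal in $G_1$. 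Combined with $G_1=HL$ and $H\cap L=1$, this gives $G_1=H\rtimes L$, which has the claimed shape $(F^8.K^8)\cn\Spin_{8,K}^{-}(F)$.
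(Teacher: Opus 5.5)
Your proposal is correct and follows the paper's own argument step for step: $G_{13}=Z\rtimes L$ via \cite[Lemma~2.1]{BSW1}; transitivity of $H$ on $\hat{\mathcal O}$ together with $\hat G_{13}=G_{13}$, giving $G_1=HL$; central commutators making $F^8.K^8$ non-split; and the centraliser-of-$Z$ and squares argument for $H\lhd G_1$. The step you only ``expect'', $H\cap L=1$, is immediate, since $H\cap L\leqslant H\cap G_{13}=Z$ and $Z\cap L=1$.
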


Finally, we determine the stabiliser of $(0,0,0\mid 0,0,e_0 + \lambda e_1)$. 
\begin{theorem}
	\label{theorem:7_type2_stabiliser}
	The stabiliser in ${}^2\SE_{6,K}(F)$ of
	$X_2 = (0,0,0\mid 0,0,e_0 + \lambda e_1)$ is a group of shape
	\begin{equation}
		(F.K^{10})\cn\SU_{5,K}(F).
	\end{equation}
	Here the extension $F.K^{10}$ is non-split.
\end{theorem}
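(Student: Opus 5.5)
I would model the argument on the proof of Theorem~\ref{theorem:7_type1_stabiliser}: first pin the stabiliser $G_2$ of $X_2$ inside the split stabiliser of $X_2$ in $\SE_6(K)$. By \cite{BSW1}, that group has shape $K^{16}\cn\Spin_{10}^{+}(K)$, and it acts on the $17$-space $U_2$ and on $U_2/\langle X_2\rangle$. I will then find a normal unipotent subgroup and a Levi-type complement.

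The key geometric input is the radical $R_2$ of $\H|_{U_2}$ computed in \eqref{eq:R2}. It is $5$-dimensional, does not contain $X_2$, and is preserved by $G_2$. Together with $X_2$ it spans a $6$-space. The complement structure should come from the $\Omega_{10}^+(K)$ quotient acting on the $10$-space $U_2/\langle\text{the }K^{16}\text{ part}\rangle$, which is identified via \cite[Section~7]{BSW1} with a split orthogonal $10$-space. On that space the Hermitean form induces a second form, so Proposition~\ref{prop:3_2forms} does not apply. Instead, the image of $X_2$ corresponds to a point $u+\lambda v$ as in Proposition~\ref{prop:3_2forms_orbits}. Its $\s$-conjugate $u+\lambda^{\s}v$, visible in $R_2$ through $e_0+\lambda^{\s}e_1$, shows that $G_2$ must preserve a pair of complementary totally singular $5$-spaces (the radical $R_2$ and its image) interchanged by $\s$.

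The stabiliser in the split group of such a $\s$-conjugate pair of maximal totally singular subspaces, also preserving the Hermitean form, is $\SU_{5,K}(F)$. The $\GL_5(K)$ Levi factor is cut down to the unitary group by the Hermitean pairing between the two $5$-spaces. I would realise this $\SU_5$ explicitly by conjugating the $X_1$/$X_3$ configuration by $N$-type elements that preserve $X_2$. Concretely, I would check which $N_{\mu e_i}$, $N'_{\mu e_i}$, $N''_{\mu e_i}$ fix $X_2$ and generate a group acting as $\SU_5$ on $R_2$.

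Next comes the unipotent radical. Among the generators fixing $X_2$, I take those acting trivially on $U_2/\langle X_2\rangle$ modulo $R_2$. As in the type~$1$ computation, these form a group $H_2$ that is additive in the parameters modulo its commutators. I then show $H_2/[H_2,H_2]\cong(K,+)^{10}$, indexed by the $10$ $K$-parameters paired with $\Lambda^2$ of the natural $5$-dimensional module (a $10$-dimensional $K$-module for $\SU_5$). I also show $[H_2,H_2]\cong(F,+)$: the commutator of two generators has the form $\Tr_{K/F}(\lambda\mu^{\s})$ times a fixed transvection-like element, exactly like the displayed matrix $E_{7,1}-E_{10,4}$ earlier. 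Non-splitness then follows as before: $\Tr_{K/F}$ is onto, so $H_2$ is non-abelian with central derived group, and a split extension would be abelian.

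To finish, I prove $G_2=H_2\rtimes S$ with $S\cong\SU_{5,K}(F)$. I would show that an element of $G_2$ acting trivially on $R_2$ and on $U_2/R_2$ lies in $H_2$. The last step is the analogue of $\hat G_{13}=G_{13}$: any scalar freedom (a torus element $\beta$) is forced to satisfy $\beta\beta^{\s}=1$ and determinant $1$ by preservation of $\B_{\H}$ and $\fdet$, which rules out a $\mathrm{GU}_5$ quotient. This identification of the full Levi quotient as exactly $\SU_5$, rather than a subgroup or a unitary group extended by scalars, is the step I expect to be the main obstacle. I would attack it by a co\"ordinate calculation on $R_2$ and a complementary $\s$-twisted $5$-space, mirroring the determinant trick with $p,q$ used for type~$1$.
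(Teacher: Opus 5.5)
There is a genuine gap. The geometry you use to locate the Levi factor and the unipotent radical is not the geometry of $G_2$, and the upper bound on $G_2$, which you rightly flag as the main obstacle, is left without a mechanism.

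\emph{The invariant pair and the kernel.} $G_2$ does not preserve a complementary pair of totally singular $5$-spaces. Co\"ordinatewise $\s$ does not commute with $G_2$. In any case $R_2$ meets its $\s$-image in the $4$-space with $C=0$, since only the $C$-co\"ordinate $e_0+\lambda^{\s}e_1$ moves. More decisively, $N'_{\mu e_{-0}}$ fixes $X_2$, fixes $R_2$ pointwise and acts trivially on $U_2^{\perp}/R_2$. Yet it sends $(0,0,1\mid0,0,0)\in U_2^{\perp}$ to itself plus $(0,0,0\mid-\mu^{\s}e_{-0},0,0)\in R_2$. If such a $g$ preserved a complement $C'$ of $R_2$ in $U_2^{\perp}$, then $w^g-w\in R_2\cap C'=0$ for all $w\in C'$, so $g$ would act trivially on $U_2^{\perp}$. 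Hence $G_2$ preserves no complement, and the $K^{10}$ you later want is incompatible with the invariant pair you posit.

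The space you work in is also off. The $\Omega_{10}^{+}(K)$-module of the split stabiliser is $\J/U_2$, where $X_2$ and $R_2$ both vanish. The $u+\lambda v$ picture lives in $\J_{10}^{abC}$, which $G_2$ does not preserve. Moreover $\H$ vanishes identically on $R_2$. The non-degenerate Hermitean $5$-space is $\ovV=U_2^{\perp}/R_2$.

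Your normal subgroup is also cut out by the wrong condition. The same $N'_{\mu e_{-0}}$ lies in the kernel of the action on $\ovV$. However, it sends $(0,0,0\mid0,0,e_{-1}-\lambda^2e_1-\lambda1_{\OO})\in U_2$ to itself plus $(0,0,0\mid0,-\mu(e_{-1}+\lambda e_0),0)$, which is not in $\langle X_2\rangle+R_2$. So requiring triviality on $U_2/\langle X_2\rangle$ modulo $R_2$, or on $U_2/R_2$, discards the whole $K^{10}$. The correct kernel is that of the action on $\ovV$ (equivalently, on $R_2$).

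\emph{The upper bound.} The missing idea is that $U_2^{\perp}$ is itself an $\SE_6(K)$-image of $\J_{10}^{abC}$. The element $\kappa$ of \eqref{eq:7_type2_kappa} satisfies $(\J_{10}^{abC})^{\kappa}=U_2^{\perp}$ and $p^{\kappa}=X_2$ for $p=(0,0,0\mid e_0,0,0)$. So $\kappa G_2\kappa^{-1}$ lies in the joint stabiliser of $\J_{10}^{abC}$ and $\langle p\rangle$, of known shape $K^{11}\cn K^{10}\cn\SL_5(K).K^{\times}.K^{\times}$ \cite[Lemma~11.5]{BSW1}. The identity \eqref{eq:7_type2_kappa_pairing} forces the conjugated elements to fix the third scalar co\"ordinate. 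Fixing both $p$ and that co\"ordinate removes both torus factors. This gives $\rho(G_2)\leqslant\SL(\ovV)\cap\mathrm{GU}(\ovV)=\SU(\ovV)$. It also places $\ker\rho$ inside the conjugate of $K^{11}\cn K^{10}$, where it is computed as an extension of $Z=\{L'_{se_{-0}}:s+s^{\s}=0\}$ by $\mathrm{Alt}_5(K)$.

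Your torus argument cannot replace this. The elements $D_t$ of Section~\ref{section:simplicity} preserve $\fdet$ and $\B_{\H}$. In the co\"ordinates $(c,\alpha_1,\alpha_2,\beta_1,\beta_2)$ they act on $\ovV$ as $\diag(t^{\s}/t,1,1,t^{\s},t^{-1})$, which has determinant $(t^{\s}/t)^2$. So determinant $1$ must come from fixing the vector $X_2$, which is exactly what the pairing identity exploits.

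\emph{What does match.} The lower-bound parts of your plan agree with the paper: trace commutators giving a central $F$, non-splitting because a complement to a central subgroup with abelian quotient would make the group abelian, and explicit $\SU_5$ generators from $N$-type elements. Two caveats remain. Four generator families must be products such as $N'_{pe_{\bar\omega}}N'_{p\lambda e_{-\omega}}$, because the single factors move $X_2$. You also still need to show that your $\SU_5$ meets the kernel trivially; the paper does this using $v=(0,0,1\mid0,0,0)$ and the complement of $R$ orthogonal to $\J_{16}^{AB}$.
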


\begin{proof}
	Let $G_2$ denote the stabiliser of $X_2$ in ${}^2\SE_{6,K}(F)$. Since $G_2$ is a subgroup of $\SE_6(K)$,
	it preserves the Dickson--Freudenthal determinant and therefore stabilises the $17$-space $U_2$ determined
	by $X_2$ (\ref{eq:U2}). Since $G_2$ also preserves the Hermitean form $\H$, it stabilises the subspace
	$U_2^{\perp}$, where $\perp$ is again taken with respect to the sesquilinear form $\B_{\H}$.

	We now determine $U_2^{\perp}$ explicitly. As $\B_{\H}$ is non-degenerate on $\J$, we have
	$\dim(U_2^{\perp}) = 27 - 17 = 10$. Note that for $x = \sum_{i\in\pm I} x_i e_i$ and
	$y = \sum_{i\in\pm I} y_i e_i$ in $\OO_K$ we have
	\begin{equation}
		\Tr(x\ovy^{\s}) = \sum_{i\in\pm I} x_i y_{-i}^{\s},
	\end{equation}
	so $e_i$ pairs non-trivially with $e_{-i}$ only.  Applying this to the
	spanning vectors in (\ref{eq:U2}), we find that $U_2^{\perp}$ consists of
	the vectors $(0,0,c\mid A,B,C)$ with
	\begin{equation}
		\label{eq:U2perp}
		\begin{array}{r@{\;}c@{\;}l}
			A & \in & \langle e_{\bar{\omega}}+\lambda^{\s} e_{-\omega},
			e_{\omega}-\lambda^{\s} e_{-\bar{\omega}}, e_{-0}, e_1 \rangle, \\
			B & \in & \langle e_{-1}+\lambda^{\s} e_0, e_{-0} - \lambda^{\s} e_1,
			e_{-\omega}, e_{-\bar{\omega}} \rangle, \\
			C & \in & \langle e_0 + \lambda^{\s} e_1 \rangle,
		\end{array}
	\end{equation}
	which gives $1+4+4+1=10$ independent co\"{o}rdinates.  Notice that
	$X_2\notin U_2^{\perp}$, since $\lambda\neq\lambda^{\s}$.

	Since $\B_{\H}$ is non-degenerate, $(U_2^{\perp})^{\perp}=U_2$, and hence the radical
	of $\B_{\H}$ on $U_2^{\perp}$ is $U_2\cap U_2^{\perp}$. Comparing
	(\ref{eq:U2}) and (\ref{eq:U2perp}), and using $\lambda\neq\lambda^{\s}$, gives
	\begin{equation}
		U_2\cap U_2^{\perp}=R_2.
	\end{equation}
	Indeed, the intersections of the two $A$-spaces and of the two $B$-spaces are
	respectively $\langle e_{-0},e_1\rangle$ and
	$\langle e_{-\omega},e_{-\bar{\omega}}\rangle$, while
	$e_0+\lambda^{\s}e_1=(e_0+\lambda e_1)+(\lambda^{\s}-\lambda)e_1$ belongs to the
	$C$-space in (\ref{eq:U2}).

	Put $d=\lambda-\lambda^{\s}$. Write an arbitrary vector in $U_2^{\perp}$ in the
	form $(0,0,c\mid A,B,C)$, where
	\begin{equation*}
		\begin{array}{r@{\;}c@{\;}l}
			A&=&\alpha_1(e_{\bar{\omega}}+\lambda^{\s}e_{-\omega})+
			\alpha_2(e_{\omega}-\lambda^{\s}e_{-\bar{\omega}})+
			\alpha_3e_{-0}+\alpha_4e_1,\\
			B&=&\beta_1(e_{-1}+\lambda^{\s}e_0)+
			\beta_2(e_{-0}-\lambda^{\s}e_1)+
			\beta_3e_{-\omega}+\beta_4e_{-\bar{\omega}},\\
			C&=&\gamma(e_0+\lambda^{\s}e_1).
		\end{array}
	\end{equation*}
	A direct calculation gives
	\begin{equation}
		\label{eq:7_H_on_U2perp}
		\H((0,0,c\mid A,B,C))=cc^{\s}
		+d(\alpha_2\alpha_1^{\s}-\alpha_1\alpha_2^{\s})
		+d(\beta_2\beta_1^{\s}-\beta_1\beta_2^{\s}).
	\end{equation}
	Thus $\B_{\H}$ induces a non-degenerate Hermitean form on the $5$-space
	\begin{equation}
		\ovV=U_2^{\perp}/R_2.
	\end{equation}
	It is the orthogonal sum of the line with form $cc^{\s}$ and two hyperbolic
	Hermitean planes. In particular, its special unitary group is $\SU_{5,K}(F)$.
	This calculation is also valid in characteristic $2$, since $d\neq0$ and
	$d^{\s}=-d$.

	We next put $X_2$ and $U_2^{\perp}$ in the co\"{o}rdinates of
	\cite[Lemma~11.5]{BSW1}.  Since $1-\lambda^{\s}e_1$ has norm $1$, the
	element $P_{1-\lambda^{\s}e_1}$ belongs to $\SE_6(K)$.  Its action
	$(A,B,C)\mapsto(uA,Bu,\ovu C\ovu)$, with $u=1-\lambda^{\s}e_1$, sends
	the displayed basis of $U_1^{\perp}$ to that of $U_2^{\perp}$.
	Thus the calculation preceding Theorem~\ref{theorem:7_type1_stabiliser}
	gives $\bigl(\J_{10}^{abC}\bigr)^k=U_2^{\perp}$, where
	\begin{equation}
		\label{eq:7_type2_k}
		k=M M_{e_{-0}}P_{1-\lambda^{\s}e_1}.
	\end{equation}
	Set
	\begin{equation*}
		u_1=-(e_{-1}+e_{-0}+e_1),\qquad
		u_2=-e_{-1}+e_0+e_{-0},\qquad
		D_0=\diag(-d,1,-d^{-1}).
	\end{equation*}
	Both $u_1$ and $u_2$ have norm $1$.  The element
	\begin{equation}
		\label{eq:7_type2_h}
		h=M''_{d^{-1}e_{-1}}\delta P'_{u_1}P'_{u_2}D_0
	\end{equation}
	stabilises $\J_{10}^{abC}$.  Put
	\begin{equation}
		\label{eq:7_type2_kappa}
		\kappa=h^{-1}k,
	\end{equation}
	and write $p=(0,0,0\mid e_0,0,0)$ and $v=(0,0,1\mid0,0,0)$.
	Direct substitution gives
	\begin{equation*}
		\bigl(\J_{10}^{abC}\bigr)^\kappa=U_2^{\perp},\qquad
		p^\kappa=X_2,
	\end{equation*}
	and, for $X=(a,b,c\mid A,B,C)$,
	\begin{equation}
		\label{eq:7_type2_kappa_pairing}
		\B_{\H}(X_2,X^\kappa)=d^2c^{\s}.
	\end{equation}
	For $g\in G_2$, put $\widetilde g=\kappa g\kappa^{-1}$ and write
	$X^{\widetilde g}=(a',b',c'\mid A',B',C')$.  Preservation of
	$\B_{\H}$ and the fact that $g$ fixes $X_2$ give
	\begin{equation*}
		d^2(c')^{\s}=\B_{\H}(X_2,(X^{\widetilde g})^\kappa)
		=\B_{\H}(X_2,(X^\kappa)^g)
		=\B_{\H}(X_2,X^\kappa)=d^2c^{\s}.
	\end{equation*}
	Since $d\neq0$ and $\s$ is an automorphism, $c'=c$.  Thus
	$\widetilde g$ fixes $p$ and leaves the third scalar co\"{o}rdinate unchanged.

	By \cite[Lemma~11.5]{BSW1}, the joint stabiliser of $\J_{10}^{abC}$ and
	$\langle p\rangle$ has shape
	\begin{equation}
		K^{11}\cn K^{10}\cn\SL_5(K).K^{\times}.K^{\times}.
	\end{equation}
	Write $\widetilde Q$ for its normal subgroup $K^{11}\cn K^{10}$, and put
	$Q=\kappa^{-1}\widetilde Q\kappa$.  In the basis
	$v_1,\ldots,v_5,w_1,\ldots,w_5$ used in that proof, $\widetilde Q$ fixes
	$R=\langle v_1,\ldots,v_5\rangle$ pointwise and acts trivially on
	$\J_{10}^{abC}/R$, while the displayed $\SL_5(K)$ acts as
	\begin{equation*}
		\begin{bmatrix}S&0\\0&(S^{-1})^{\T}\end{bmatrix},\qquad S\in\SL_5(K).
	\end{equation*}
	Here $R^\kappa=R_2$.  The action on the quotient is faithful, since
	$(S^{-1})^{\T}=\II_5$ implies $S=\II_5$.
	We next show that fixing $p$ and the third scalar co\"{o}rdinate leaves
	precisely $\widetilde Q\cn\SL_5(K)$.
	Let $E\cong K^{16}$ be the normal subgroup in the stabiliser of
	$\J_{10}^{abC}$ from \cite[Theorem~11.2]{BSW1}.  Its proof shows that $E$
	acts sharply transitively on white vectors whose third scalar co\"{o}rdinate
	is $1$.  Choose $t\in E$ with $v^t=v^{\widetilde g}$.  Then
	$s=\widetilde g t^{-1}$ fixes $v$ and stabilises $\J_{10}^{abC}$, so it
	belongs to the displayed $\Spin_{10}^{+}(K)$.  This group preserves the
	decomposition $\J=\J_1^c\oplus\J_{16}^{AB}\oplus\J_{10}^{abC}$, whereas
	on $\J_{16}^{AB}$ an element of $E$ only adds a vector of
	$\J_{10}^{abC}$.  Since $p^{st}=p$, comparison of these components gives
	$p^s=p$ and then $p^t=p$.  Thus $t$ belongs to the $K^{11}$ in
	\cite[Lemma~11.5]{BSW1}.

	The element $s$ preserves the first $5$-space in the basis used there.
	Multiplying $s$ by elements of the displayed $K^{10}$ cancels its
	lower block.  The displayed $\SL_5(K)$ then reduces its first
	diagonal block to $\diag(\eta,1,1,1,1)$, for some $\eta\in K^{\times}$.
	All these elements fix $p$ and $v$, and preserve the above decomposition.
	The remaining element fixes the $C$-co\"{o}rdinate and scales $a$ and $b$
	inversely.  The calculation in the proof of \cite[Lemma~9.5]{BSW1} shows
	that its action on $\J$ is
	\begin{equation*}
		(a,b,c\mid A,B,C)\longmapsto
		(\mu^2a,\mu^{-2}b,c\mid\mu^{-1}A,\mu B,C)
	\end{equation*}
	for some $\mu\in K^{\times}$.  As it fixes $p$, we have $\mu=1$, so the
	remaining element is the identity.  Conversely, every element of the
	displayed $K^{11}$, $K^{10}$ and $\SL_5(K)$ fixes $p$ and the third scalar
	co\"{o}rdinate.  Hence the common fixer is exactly
	$\widetilde Q\cn\SL_5(K)$, and no additional scalar factor remains.

	Let
	\begin{equation}
		\rho:G_2\longrightarrow\GL(\ovV)
	\end{equation}
	be the action on $\ovV=U_2^{\perp}/R_2$.  The preceding calculation and
	(\ref{eq:7_H_on_U2perp}) show that
	\begin{equation}
		\rho(G_2)\leqslant\SU(\ovV)=\SU_{5,K}(F),
	\end{equation}
	and $\ker\rho=Q\cap G_2$.  Put $Q_0=\ker\rho$.  We first construct a
	complement to this normal subgroup, and then determine $Q_0$.
	Write
	\begin{equation*}
		\mathbf x=(c,\alpha_1,\alpha_2,\beta_1,\beta_2)^{\T}.
	\end{equation*}
	The form in
	(\ref{eq:7_H_on_U2perp}) has Gram matrix
	\begin{equation}
		\label{eq:7_type2_Gram}
		G=\begin{bmatrix}
		1&0&0&0&0\\
		0&0&d&0&0\\
		0&-d&0&0&0\\
		0&0&0&0&d\\
		0&0&0&-d&0
		\end{bmatrix},
	\end{equation}
	so that $\H(\mathbf x)=\mathbf x^{\s\T}G\mathbf x$.

	Put $\widetilde Q_0=\kappa Q_0\kappa^{-1}$.  On
	$\J_{10}^{abC}$, in the above standard basis, every element of
	$\widetilde Q$ has a lower block
	\begin{equation*}
		\begin{bmatrix}\II_5&0\\ A&\II_5\end{bmatrix},
	\end{equation*}
	where $A$ is alternating; in characteristic $2$ its diagonal is required
	separately to be zero.  These blocks add under multiplication.  The elements
	with zero lower block form the normal group $V\cong K^{11}$.  Its elements
	have the form $M''_xL'_y$, where
	\begin{equation*}
	\begin{split}
		x={}&x_{-1}e_{-1}+x_0e_0+x_{-\omega}e_{-\omega}
		+x_{-\bar\omega}e_{-\bar\omega},\\
		y={}&y_{-1}e_{-1}+y_{\bar\omega}e_{\bar\omega}+y_\omega e_\omega
		+y_{-0}e_{-0}+y_{-\omega}e_{-\omega}
		+y_{-\bar\omega}e_{-\bar\omega}+y_1e_1.
	\end{split}
	\end{equation*}
	On the standard co\"{o}rdinates put
	\begin{equation*}
		\B_\kappa(X,Y)=\B_{\H}(X^\kappa,Y^\kappa).
	\end{equation*}
	We construct the unitary subgroup explicitly.  Let $E_{ij}$ denote the
	$5\times5$ matrix unit.  For $p\in K$, direct substitution in
	(\ref{eq:5_nx_image}) gives the following elements fixing $X_2$, together
	with their matrices for the action $\mathbf x\mapsto T\mathbf x$ on $\ovV$:
	\begin{equation}
	\label{eq:7_type2_SU_generators}
	\begin{array}{c|c}
	N_{p e_{\bar\omega}}&\II_5-pE_{25}-p^{\s}E_{43}\\
	N_{p e_\omega}&\II_5-pE_{35}+p^{\s}E_{42}\\
	N_{p e_{-\omega}}&\II_5-pE_{24}+p^{\s}E_{53}\\
	N_{p e_{-\bar\omega}}&\II_5+pE_{34}+p^{\s}E_{52}\\
	N'_{p e_{\bar\omega}}N'_{p\lambda e_{-\omega}}&
	\II_5+pdE_{13}-p^{\s}E_{21}+\lambda^{\s}pp^{\s}E_{23}\\
	N'_{p e_\omega}N'_{-p\lambda e_{-\bar\omega}}&
	\II_5-pdE_{12}-p^{\s}E_{31}-\lambda^{\s}pp^{\s}E_{32}\\
	N''_{p e_{-1}}N''_{p\lambda^{\s}e_0}&
	\II_5-p^{\s}dE_{15}+pE_{41}+\lambda^{\s}pp^{\s}E_{45}\\
	N''_{p e_{-0}}N''_{-p\lambda^{\s}e_1}&
	\II_5+p^{\s}dE_{14}+pE_{51}-\lambda^{\s}pp^{\s}E_{54}.
	\end{array}
	\end{equation}
	Let $s_i(p)$ denote the element in row $i$, and put
	\begin{equation*}
		S_0=\langle s_i(p):1\leqslant i\leqslant8,\ p\in K\rangle.
	\end{equation*}
	Proposition~\ref{prop:5_nx_2se} and direct substitution for $X_2$
	give $S_0\leqslant G_2$.
	Each matrix $T$ in (\ref{eq:7_type2_SU_generators}) satisfies
	\begin{equation*}
		T^{\s\T}GT=G,\qquad \det(T)=1,
	\end{equation*}
	by direct multiplication.  The four matrices in the first four rows are the
	elementary operations joining the two hyperbolic planes.  The last four join
	the first co\"{o}rdinate to either hyperbolic plane.  To see the remaining
	elementary operations explicitly, let $U(p)$ denote the matrix in the fifth
	row.  Direct multiplication gives
	\begin{equation*}
		U(p)U(r)U(p+r)^{-1}
		=\II_5-\Tr_{K/F}(\lambda p^{\s}r)E_{23}.
	\end{equation*}
	The other three rows give, in the same way, $\II_5+sE_{32}$,
	$\II_5+sE_{45}$ and $\II_5+sE_{54}$ for every $s\in F$.  Indeed, the trace
	map is onto also in characteristic $2$, since $K/F$ is separable.

	Write $T_i(p)$ for the matrix in row $i$ of
	(\ref{eq:7_type2_SU_generators}).  We first show that these matrices act
	transitively on the isotropic $1$-spaces.  Let
	$\mathbf x=(c,\alpha_1,\alpha_2,\beta_1,\beta_2)^{\T}$ be nonzero and
	isotropic.  At least one of its last four co\"{o}rdinates is nonzero.  If
	$\alpha_2=0$, we make it nonzero by applying $\II_5+E_{32}$ when
	$\alpha_1\neq0$, $T_4(1)$ when $\beta_1\neq0$, or $T_2(1)$ when
	$\beta_2\neq0$.  Applying $T_5(-c/(d\alpha_2))$ then clears $c$.
	Next choose $p,r\in K$ with $p^{\s}=\beta_1/\alpha_2$ and
	$r^{\s}=-\beta_2/\alpha_2$; applying $T_1(p)$ and then $T_3(r)$ clears
	$\beta_1$ and $\beta_2$, without changing $c$ or $\alpha_2$.
	The resulting isotropic vector is $(0,\alpha_1,\alpha_2,0,0)^{\T}$, so
	$\alpha_1/\alpha_2\in F$.  The matrix
	$\II_5-(\alpha_1/\alpha_2)E_{23}$ therefore sends it to the
	$\alpha_2$-axis.  This proves the transitivity.

	The matrices $\II_5+sE_{32}$, $s\in F$, are precisely the unitary
	transvections with centre the $\alpha_2$-axis, together with the identity.
	Their conjugates therefore give all unitary transvections.  These generate
	the special unitary group of this $5$-space, by
	\cite[proof of Theorem~10.23]{Taylor}.  Hence the matrices in
	(\ref{eq:7_type2_SU_generators}) generate $\SU(G)$.

	The generation argument gives $\rho(S_0)=\SU_{5,K}(F)$.  To prove that
	this action of $S_0$ is faithful, we use the vector $v=(0,0,1\mid0,0,0)$
	in the standard co\"{o}rdinates.  Direct substitution gives
	\begin{equation*}
		v^\kappa=(0,0,0\mid0,0,
		d(-e_{-1}+\lambda^{\s}e_0+\lambda e_{-0}
		+\lambda\lambda^{\s}e_1)).
	\end{equation*}
	Substituting this vector in (\ref{eq:5_nx_image}) shows that every
	$s_i(p)$ fixes it.  Thus every element of $\kappa S_0\kappa^{-1}$ fixes $v$.

	Let $W$ be the subspace of $\J_{10}^{abC}$ orthogonal to $\J_{16}^{AB}$
	with respect to $\B_\kappa$.  In the standard basis from
	\cite[Lemma~11.5]{BSW1}, the pairing gives
	\begin{equation}
		\label{eq:7_type2_complement_space}
		W=\langle w_1,\ w_2+2v_3,\ w_3-2v_2,\ w_4-v_5,\ w_5+v_4\rangle,
		\qquad \J_{10}^{abC}=R\oplus W.
	\end{equation}
	Indeed, the five displayed vectors are orthogonal to $\J_{16}^{AB}$.
	The pairings of $v_1,\ldots,v_5$ with the co\"{o}rdinate vectors in the
	$A_{-0},B_0,B_{-1},B_{-\bar\omega},B_{-\omega}$ positions form the diagonal
	matrix $\diag(d^2,-d^{-1},-d^{-1},-d^{-1},d^{-1})$.
	It is nonsingular, so the five displayed vectors span the whole
	orthogonal subspace.  This remains valid in characteristic $2$.

	Suppose $q\in\widetilde Q_0$ fixes $v$.  As in the preceding
	joint-stabiliser calculation, $q$ belongs to the displayed
	$\Spin_{10}^{+}(K)$ and therefore preserves $\J_{16}^{AB}$.  Since it also
	preserves $\B_\kappa$, it preserves $W$.  For $w\in W$, the difference
	$w^q-w$ belongs to $R$, because $\widetilde Q$ acts trivially on
	$\J_{10}^{abC}/R$, and also belongs to $W$.  Thus $w^q=w$.
	As $q$ already fixes $R$ pointwise, it fixes $\J_{10}^{abC}$ pointwise
	and hence belongs to $V$.  The formula for $M''_xL'_y$ shows that fixing
	$v$ forces $x=y=0$.  Therefore $q=1$.
	It follows that $S_0\cap Q_0=1$.  For any $g\in G_2$, choose $s\in S_0$
	with the same action on $\ovV$.  Then $gs^{-1}\in Q_0$, so
	\begin{equation}
		\label{eq:7_type2_split}
		G_2=Q_0\rtimes S_0,\qquad S_0\cong\SU_{5,K}(F).
	\end{equation}

	We now determine $Q_0$.  Put $Z=V\cap\widetilde Q_0$, and let
	$t=M''_xL'_y\in Z$.  For $z=(0,0,0\mid A,B,0)$, the elementary action gives
	\begin{equation*}
		z^t-z=(\Tr(\ovB x),\Tr(Ay),0\mid0,0,
		\ovx\ovA+\ovB y)\in\J_{10}^{abC}.
	\end{equation*}
	Since $t$ fixes $\J_{10}^{abC}$ pointwise and preserves $\B_\kappa$,
	this difference is orthogonal to $\J_{10}^{abC}$, and hence belongs to
	its radical $R$.  Every vector of $R$ has first scalar co\"{o}rdinate zero,
	so $\Tr(\ovB x)=0$ for every $B$, giving $x=0$.  The $C$-co\"{o}rdinate
	then gives
	\begin{equation*}
		\OO_K y\subseteq
		\langle e_{-1},e_{-0},e_{-\omega},e_{-\bar\omega}\rangle.
	\end{equation*}
	Applying the multiplication table to $y$, $e_{-1}y$, and $e_{-\omega}y$
	shows successively that $y=s e_{-0}$.  Now $v^t=v+sp$, and
	(\ref{eq:7_type2_kappa_pairing}) gives
	\begin{equation*}
		0=\B_\kappa(v+sp,v+sp)-\B_\kappa(v,v)=d^2(s+s^{\s}).
	\end{equation*}
	Thus $s+s^{\s}=0$.

	To prove the converse and obtain the remaining parameters, put
	$B_{ij}=E_{ij}-E_{ji}$.  Direct substitution in
	(\ref{eq:5_nx_image}) shows that the following four families fix $X_2$ and act
	trivially on $\ovV$; after conjugating by $\kappa$, their lower blocks are as
	indicated:
	\begin{equation}
		\label{eq:7_type2_kernel_seeds}
	\begin{array}{c|c}
	\text{element of $Q_0$}&\text{lower block}\\ \hline
	N'_{\mu e_{-0}}&-\mu^{\s}d^{-1}B_{12}\\
	N'_{\mu e_1}&-\mu^{\s}d^{-1}B_{13}\\
	N''_{\mu e_{-\omega}}&-\mu d^{-1}B_{14}\\
	N''_{\mu e_{-\bar\omega}}&-\mu d^{-1}B_{15}.
	\end{array}
	\qquad(\mu\in K)
	\end{equation}
	The elements in the left-hand column belong to ${}^2\SE_{6,K}(F)$ by
	Proposition~\ref{prop:5_nx_2se}; hence they do belong to $Q_0$.
	With $[x,y]=x^{-1}y^{-1}xy$, direct calculation gives
	\begin{equation}
		\label{eq:7_type2_kernel_commutator}
		[\kappa N'_{\mu e_{-0}}\kappa^{-1},
		\kappa N'_{\nu e_1}\kappa^{-1}]
		=L'_{d\Tr_{K/F}(\mu\nu^{\s})e_{-0}}
		\qquad(\mu,\nu\in K).
	\end{equation}
	The trace map is onto since $K/F$ is separable, and its kernel is $dF$,
	also in characteristic $2$.  The right-hand side therefore supplies all
	the elements allowed by the preceding calculation.  Hence
	\begin{equation}
		\label{eq:7_type2_V0}
		Z=\{L'_{s e_{-0}}:s+s^{\s}=0\}\cong(F,+).
	\end{equation}
	Moreover, $Z$ is central in $\widetilde Q$.  Indeed, its elements lie in
	the abelian group $E$, so they commute with $V$.  The displayed $K^{10}$
	normalises $E$ and fixes $v$ and $p$.  Conjugation by this $K^{10}$ therefore
	preserves the unique element of $E$ sending $v$ to $v+sp$, namely
	$L'_{s e_{-0}}$.  Thus $Z$ also commutes with $K^{10}$, as required.

	It remains to obtain every alternating lower block.  Let $\varepsilon_i$
	be the $i$-th standard row vector of $K^5$.  Since lower blocks add under
	multiplication, the four families in (\ref{eq:7_type2_kernel_seeds})
	supply every block
	\begin{equation*}
		\varepsilon_1^{\T}\mathbf z-\mathbf z^{\T}\varepsilon_1,
		\qquad\mathbf z\in K^5.
	\end{equation*}
	For $i=5,6,7$, let $S_i$ be the upper-left block of
	$\kappa s_i(1)\kappa^{-1}$ in the standard basis
	$v_1,\ldots,v_5,w_1,\ldots,w_5$.  Direct substitution gives
	\begin{equation*}
		\varepsilon_1S_5=\varepsilon_1+d^2\varepsilon_4,\qquad
		\varepsilon_1S_6=\varepsilon_1-d^2\varepsilon_5,\qquad
		\varepsilon_1S_7=\varepsilon_1-d^2\varepsilon_2.
	\end{equation*}
	Normality of $Q_0$ allows us to conjugate by these elements.  On lower
	blocks this sends $A$ to $S_i^{\T}AS_i$.  Since $\mathbf zS_i$ ranges over
	all row vectors, subtracting the known first-row-and-column blocks gives
	all blocks $\varepsilon_j^{\T}\mathbf z-\mathbf z^{\T}\varepsilon_j$
	for $j=4,5,2$; here we use only $d\neq0$.  Together these supply all ten
	independent entries of an alternating $5\times5$ matrix.  Two elements of
	$\widetilde Q_0$ have the same lower block precisely when they differ by
	an element of $Z$.  Therefore
	\begin{equation}
		\label{eq:7_type2_kernel}
		\widetilde Q_0/Z\cong\operatorname{Alt}_5(K)\cong(K,+)^{10}.
	\end{equation}
	These arguments remain valid in characteristic $2$.
	The commutator in (\ref{eq:7_type2_kernel_commutator}) shows that
	$\widetilde Q_0$ is non-abelian.  If $Z$ had a complement in
	$\widetilde Q_0$, that complement would be abelian by
	(\ref{eq:7_type2_kernel}).  Since $Z$ is central, $\widetilde Q_0$ would
	then be abelian, a contradiction.  Thus $Q_0$ has non-split shape
	$F.K^{10}$.  Together with (\ref{eq:7_type2_split}), this proves the
	required shape $(F.K^{10})\cn\SU_{5,K}(F)$.

\end{proof}

\section{Simplicity of ${}^2\E_{6,K}(F)$}
\label{section:simplicity}

We retain hypothesis~\eqref{eq:1_norm_surjective}. As in
\cite[Section~10]{BSW1}, we use Iwasawa's lemma.

\begin{lemma}[Iwasawa]
    A perfect group acting faithfully and primitively on a set is simple
    if a point stabiliser has an abelian normal subgroup whose conjugates
    generate the group.
\end{lemma}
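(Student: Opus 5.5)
The plan is the standard argument. Take a non-trivial normal subgroup $N$ of the perfect group $G$, show that $N$ is transitive, and then combine transitivity with the abelian normal subgroup of a point stabiliser and perfectness to force $N=G$. Throughout, $G$ acts faithfully and primitively on $\Omega$, $\omega\in\Omega$ is a fixed point, and $A\trianglelefteq G_\omega$ is abelian with $\langle A^g : g\in G\rangle = G$.

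\emph{Step 1: $N$ is transitive.} Since $N\trianglelefteq G$, the $N$-orbits on $\Omega$ are permuted by $G$: for $g\in G$ we have $(\alpha^N)^g=(\alpha^g)^N$. So the $N$-orbits form a $G$-invariant partition of $\Omega$. By primitivity this partition is either into singletons or has a single block. If every $N$-orbit were a singleton, $N$ would act trivially, contradicting faithfulness and $N\neq1$. Hence $N$ is transitive, and therefore $G=G_\omega N=NG_\omega$.

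\emph{Step 2: every conjugate of $A$ lies in $AN$.} Write an arbitrary $g\in G$ as $g=hn$ with $h\in G_\omega$ and $n\in N$. Since $A$ is normal in $G_\omega$, we get $A^g=(A^h)^n=A^n$. For $a\in A$ we have $n^{-1}an=a\,[a,n]$, and $[a,n]\in N$ by normality of $N$. Thus $A^g\subseteq AN$. As $AN$ is a subgroup ($N$ being normal) and the conjugates of $A$ generate $G$, we obtain $G=AN$.

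\emph{Step 3: conclude using perfectness.} Now $G/N=AN/N\cong A/(A\cap N)$ is abelian, so $[G,G]\leqslant N$. Since $G$ is perfect, $G=[G,G]\leqslant N$, so $N=G$. Hence $G$ has no proper non-trivial normal subgroups; $G\neq1$ because it acts faithfully and transitively on a set with a non-trivial primitive action (the trivial group case is excluded by convention). So $G$ is simple.

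The only point needing care is Step 1: we must use faithfulness to exclude the singleton partition. Note that the argument never uses that $A$ is non-trivial beyond the generation hypothesis. The remaining steps are routine manipulations with normal subgroups. In the application to ${}^2\E_{6,K}(F)$, the real work lies in verifying the hypotheses, not in this lemma.
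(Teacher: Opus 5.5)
Your argument is correct and is the standard proof of Iwasawa's lemma. The paper gives no proof at all: it quotes the lemma as a standard tool and only verifies its hypotheses for $G/Z_{\mathrm{sc}}$ acting on $\mathcal P$.

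One small correction. $G\neq1$ does not follow from the hypotheses as stated, since the trivial group acting on a single point satisfies all of them, so your justification of it is circular. State it as an explicit assumption rather than deriving it. This costs nothing in the application, because $G/Z_{\mathrm{sc}}$ acts transitively and faithfully on $\mathcal P$, which has more than one point.
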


Put $G={}^2\SE_{6,K}(F)$, and let $\mathcal P$ be its orbit of type-$1$
white points. Write
\[
    x=X_1=(0,0,0\mid0,0,e_0),\qquad
    x^*=(0,0,0\mid0,0,e_{-0}),\qquad p=X_3,
\]
and put $P=G_{\langle x\rangle}$ and $S=G_p\cong\Spin^-_{10,K}(F)$.
We use $H\lhd G_x$ and $Z=[H,H]\cong(F,+)^8$ from the
type-$1$ stabiliser calculation.

We first pass from the vector stabiliser $G_x$ to $P$. For $t\in K^\times$, set
\begin{equation}
    \label{eq:simplicity_diagonal}
    u_t=t e_0+(t^{\s})^{-1}e_{-0},\qquad
    D_t=\diag(1,u_t,u_t^{-1}).
\end{equation}
The algebra $Ke_0+Ke_{-0}$ is sociable. The factorisation
$D_t=M'_{u_t-1}L'_1M'_{u_t^{-1}-1}L'_{-u_t}$, in the notation of
\cite[Section~6]{BSW1}, shows that $D_t$ preserves $\fdet$. Its action is
\[
    (a,b,c\mid A,B,C)^{D_t}
    =\left(a,\frac{t}{t^{\s}}b,\frac{t^{\s}}t c\ \middle|\
      \bar u_t A u_t^{-1},\overline{u_t^{-1}}B,Cu_t\right).
\]
The scalar multipliers have norm $1$, and opposite octonion
co\"ordinates have multipliers $r$ and $(r^{\s})^{-1}$.
Thus $D_t$ also preserves $\B_{\H}$, and
\[
    x^{D_t}=tx,\qquad p^{D_t}=(t^{\s}/t)p,\qquad
    P=G_x\{D_t:t\in K^\times\}.
\]
Conjugation by $D_t$ rescales the eight families generating $H$.
For $D_t^{-1}N'_{\mu e_i}D_t$ the new parameter is
$\mu t^{\s}/t$ when $i=1,\omega,\ombb$, and $\mu(t^{\s})^2$ when
$i=-0$. For $D_t^{-1}N''_{\nu e_i}D_t$ it is $\nu t$ when
$i=-1,-\omega,-\ombb$, and $\nu/t^{\s}$ when $i=-0$.
Hence $H\lhd P$, and consequently $Z\lhd P$.

\begin{lemma}
    \label{lemma:simplicity_generation}
    The group $G$ is perfect and is generated by the conjugates of $Z$.
    Moreover, $\langle P,S\rangle=G$.
\end{lemma}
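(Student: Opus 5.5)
The plan is to prove the three assertions in reverse order: first $\langle P,S\rangle=G$, then generation by conjugates of $Z$, then perfectness.

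\emph{Step 1: $\langle P,S\rangle=G$.} Put $M=\langle P,S\rangle$. I will show that $M$ is transitive on $\mathcal P$, and then that $M$ contains the full stabiliser of a point of $\mathcal P$. Since $P\leqslant M$ already contains $G_{\langle x\rangle}$, transitivity gives $M=G$ at once. For transitivity, I would use $S\cong\Spin^-_{10,K}(F)$ acting on $\J_{10}^{abC}$. By the proof of Theorem~\ref{theorem:6_three_orbits}, the group induced on the $(Q,B)$-subspace is $\Omega(U,Q)$, and the type-$1$ points of $\J_{10}^{abC}$ form one orbit under it (Proposition~\ref{prop:3_2forms_orbits}). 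By Lemma~\ref{lemma:6_reduce_J10}, every white point is moved into $\J_{10}^{abC}$ by the explicit elements $\tau,\delta,N_x,N'_x,N''_x,P_u$. So it suffices to check that each of these generators lies in $M$. The $N_x$ and $P_u$ fix $p$, so they lie in $S$. For the $N'$ and $N''$ families, the parameters that fix $x$ generate $H\leqslant P$. The remaining ones are conjugates of these under elements of $S$, for example under $P_u$ or products of $N$'s realising coordinate permutations of $\OO$. For $\tau$ and $\delta$, I would instead note that $M$ contains $S$ and the conjugate $S^{h}$ for suitable $h\in H$ with $p^h\neq p$. Since $H$ moves $\langle X_3\rangle$ transitively on $\hat{\mathcal O}$, this gives stabilisers of many non-isotropic points. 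I would then argue via a standard "two maximal-rank subgroups generate" reasoning that the resulting subgroup acts transitively on non-isotropic white points, and hence on all white points, by conjugating the type-$1$ orbit.

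\emph{Step 2: generation by conjugates of $Z$.} Let $N=\langle Z^g:g\in G\rangle$, which is normal in $G$. By Step~1 it suffices to show $S\leqslant N$ and $P\leqslant NS$. Since $Z\cong F^8$ is a root-type subgroup inside $G_{13}=Z\rtimes L$, the group $Z$ contains Siegel-type transformations of the $10$-space. Conjugating by $S$ and using \cite[Theorem~11.46]{Taylor} as in Theorem~\ref{theorem:6_three_orbits}, the subgroup $N\cap S$ maps onto $\Omega(V,Q)$. The kernel is $\{P_{\pm1}\}$, which is absorbed because $\Spin$ is a perfect central extension, or else handled by an explicit commutator in $N$. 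Hence $S\leqslant N$. Next, $H$ is generated by $N'$ and $N''$ families, each of which is $S$-conjugate (through $N$'s and $\delta$-type moves realised in $S$) to families $N_{\mu e_i}$ lying in $S\leqslant N$. So $H\leqslant N$. The complement $L\cong\Spin^-_{8,K}(F)$ lies in $S$, and the torus $D_t$ can be written as a product of root elements, using its factorisation $D_t=M'L'M'L'$ adjusted to Hermitean-preserving versions, or via $\SU_2$-type subgroups generated by $N$'s. Hence $P=HLD\leqslant N$, and $G=\langle P,S\rangle=N$.

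\emph{Step 3: perfectness.} By Step~2 it suffices that $Z\leqslant[G,G]$. This holds because $Z=[H,H]$ was already proved in the type-$1$ stabiliser calculation.

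The main obstacle is Step~1. The difficulty is realising $\tau$, $\delta$ and the non-fixing $N'$, $N''$ elements inside $\langle P,S\rangle$ without circularity. If that fails, I would prove transitivity of $M$ on $\mathcal P$ directly: the $S$-orbits on $\mathcal P$ are determined by the relative position of a point $y$ to $p$, via the value $\B_{\H}(p,y)$ and whether $y\in U_p$. I would then show that $H\leqslant P$ links these $S$-orbits.
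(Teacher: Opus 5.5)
Your reduction of $\langle P,S\rangle=G$ to transitivity on $\mathcal P$ is logically sound, but the step that carries the weight is never supplied. The moves of Lemma~\ref{lemma:6_reduce_J10} include $\tau$. For $\tau$ you offer only an unspecified ``two maximal-rank subgroups generate'' principle, and your fallback (linking $S$-orbits through $H$) is only announced. Nothing in the proposal shows that $\langle P,S\rangle$ contains $\tau$ or is transitive on $\mathcal P$.

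$\delta$ is not a problem: it acts as $(a,b,c\mid A,B,C)\mapsto(b,a,c\mid\ovB,\ovA,\ovC)$, so it fixes $p$ and lies in $S$.

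The missing idea is the paper's $\SL_3$ encoding. Over the sociable algebra $Ke_0+Ke_{-0}$, the map $T\mapsto e_0T+e_{-0}(T^{\s})^{-\T}$ is multiplicative on $\SL_3(K)$. The families $N_{\mu e_0},N_{\mu e_{-0}}\in S$ give the $E_{12}$- and $E_{21}$-families. The families $N'_{\mu e_{-0}},N''_{\mu e_{-0}}\in H\leqslant P$ give the $E_{32}$- and $E_{13}$-families. Their commutators yield all elementary matrices, hence the permutation matrix $\tau$.

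The paper then needs no transitivity on $\mathcal P$ at all. It first proves $G=\langle S,S^\tau,S^{\tau^2}\rangle$, for four reasons:
\begin{itemize}
\item this group contains every move of Lemma~\ref{lemma:6_reduce_J10} and Theorem~\ref{theorem:6_three_orbits};
\item it contains each $D_t$, which fixes $(1,0,0\mid0,0,0)$, a $\tau$-image of $p$;
\item it is therefore transitive on white vectors of Hermitean value $1$, by Hilbert~90;
\item it contains their stabiliser $S$.
\end{itemize}

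Step~2 contains a false claim. The $N'$- and $N''$-families cannot be $S$-conjugate to families $N_{\mu e_i}$. Indeed $N_{\mu e_i}\in S$ and conjugation by $S$ preserves $S$, whereas $N'_x$ sends $p$ to $(0,0,1\mid-x^{\s},0,0)$. Conjugation by $\delta\in S$ merely interchanges the $N'$- and $N''$-families. What is true is that $N'_x$ and $N''_x$ are $\tau$-conjugates of $N_x$. So the repair again needs $\tau$, together with normality of the normal closure $N$ of $Z$. Once you have that, $S\leqslant N$ gives $S^\tau,S^{\tau^2}\leqslant N$, hence $N=G$ directly. Your unsupported ``Hermitean-preserving factorisation'' of $D_t$ then becomes unnecessary.

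Your Step~3 ($Z=[H,H]\leqslant[G,G]$ plus generation by conjugates of $Z$) is correct. It is a tidy alternative to the paper's deduction from perfectness of $S$. Note, though, that Step~2 already uses perfectness of $S$ to absorb $\{P_1,P_{-1}\}$. The paper justifies this from the perfectness of $\Omega^-_{10,K}(F)$ and of the subgroup $\Spin^+_6(F)\cong\SL_4(F)$ containing the kernel.
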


\begin{proof}
    Put $E=\langle S,S^\tau,S^{\tau^2}\rangle$.
    These three vector stabilisers contain all the transformations used
    in the proof of Theorem~\ref{theorem:6_three_orbits}, including the
    interchanges of matrix co\"ordinates. Thus $E$ is transitive on
    type-$3$ points. Each $D_t$ fixes the first scalar basis vector,
    so belongs to $E$, and $t^\s/t$ runs through all norm-$1$ scalars
    \cite[Lemma~10.1(iv)]{Taylor}. Hence $E$ is transitive on white
    vectors of Hermitean value $1$. Since $G_p=S\leqslant E$, we have $E=G$.

    The natural image $\Omega^-_{10,K}(F)$ of $S$ is perfect
    \cite[Theorem~11.47]{Taylor}. Its kernel is trivial in characteristic
    $2$; otherwise it lies in the subgroup
    $\Spin^+_6(F)\cong\SL_4(F)$ supplied by three hyperbolic pairs,
    which is perfect \cite[Theorem~4.4]{Taylor}. Thus $S$ is perfect.
    The $S$-conjugates of $Z$ induce all the groups of Siegel
    transformations, which generate the natural image
    \cite[Theorem~11.46]{Taylor}. If $J$ is the subgroup they generate
    in $S$, then $S=J\{P_1,P_{-1}\}$. The kernel is central, so
    $S=[S,S]\leqslant J$. The three cyclic copies of $S$ therefore
    show both that $G$ is perfect and that the conjugates of $Z$ generate $G$.

    Finally, over the sociable algebra $Ke_0+Ke_{-0}$, the encoding
    \[
        T\longmapsto e_0T+e_{-0}(T^\s)^{-\T}
        \qquad(T\in\SL_3(K))
    \]
    is multiplicative. The families $N_{\mu e_0},N_{\mu e_{-0}}$
    give the elementary $E_{12}$- and $E_{21}$-families, while
    $N'_{\mu e_{-0}},N''_{\mu e_{-0}}$ give the $E_{32}$- and
    $E_{13}$-families. The first two lie in $S$, and the last two in $P$.
    Their commutators supply the remaining elementary matrices.
    Hence $\langle P,S\rangle$ contains $\tau$, and therefore all
    three cyclic copies of $S$. Thus $\langle P,S\rangle=G$.
\end{proof}

For the primitivity argument, recall
\[
    A_4=\langle e_{-0},e_1,e_\omega,e_{\ombb}\rangle_K,\qquad
    B_4=\langle e_{-0},e_{-1},e_{-\omega},e_{-\ombb}\rangle_K,
\]
and the radical
\[
    R=\{(0,0,0\mid A,B,C):A\in A_4,\ B\in B_4,\ C\in Ke_0\}.
\]
Subscripts on $A,B,C$ below denote their coefficients in the octonion basis.

\begin{lemma}
    \label{lemma:simplicity_primitive}
    The action of $G$ on $\mathcal P$ is primitive.
\end{lemma}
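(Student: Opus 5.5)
The plan is to use the standard criterion: $G$ acts primitively on $\mathcal P$ if and only if $P=G_{\langle x\rangle}$ is a maximal subgroup of $G$. Equivalently, we show that any subgroup $M$ with $P<M\leqslant G$ equals $G$. By Lemma~\ref{lemma:simplicity_generation} we have $\langle P,S\rangle=G$. It therefore suffices to show that any such $M$ contains $S=G_p$, or at least enough of $S$ to generate it together with $P$.

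First we describe the orbits of $P$ on $\mathcal P\setminus\{\langle x\rangle\}$ by a geometric invariant. For a type-1 point $\langle y\rangle\neq\langle x\rangle$, we use the relation between $y$ and the $17$-space $U_x$ and radical $R$. The natural cases are as follows:
\begin{enumerate}[(a)]
\item $y\in R$;
\item $y\in U_x\setminus R$;
\item $y\notin U_x$, i.e.\ $M(y,x)$-type pairing non-zero, which is the generic "opposite" case represented by $x^*=(0,0,0\mid0,0,e_{-0})$.
\end{enumerate}
We will show, using $H$, the $D_t$, and the $\Spin^-_{8,K}(F)$ complement $L$, that $P$ is transitive on case (c). Concretely, given such $y$, the elements $N'_{\mu e_i}$, $N''_{\nu e_j}$ of $H$ can be used, as in the proof of Lemma~\ref{lemma:6_reduce_J10}, to clear co\"ordinates and move $y$ into $\langle x,x^*\rangle$-type position. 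Then $D_t$ normalises the scalar. For the finitely many (a)/(b) orbits, we only need that each contains a point $\langle y\rangle$ such that $\langle P,P^g\rangle$ (with $\langle x\rangle^g=\langle y\rangle$) contains an element moving $\langle x\rangle$ to a point of case (c).

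The main argument then runs as follows. Let $M>P$ and pick $g\in M\setminus P$, so $\langle y\rangle=\langle x\rangle^g\neq\langle x\rangle$. If $\langle y\rangle$ is opposite to $\langle x\rangle$, transitivity of $P$ on opposite points puts $x^*$ in $\langle x\rangle^M$. If not, we combine $g$ with elements of $P_{\langle y\rangle}$ conjugated from $P$ to produce an opposite point; for instance, a root element $N'_{\mu e_{-0}}$ or $N''_{\nu e_{-0}}$, conjugated appropriately, moves $\langle x\rangle$ off $U_y$. Once $\langle x^*\rangle\in\langle x\rangle^M$, we get an element $w\in M$ interchanging $\langle x\rangle$ and $\langle x^*\rangle$; a suitable product of $\delta,\tau$ and the $P_u$ such as $P_{e_1+e_{-1}}$ does this. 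Then $M\supseteq\langle P,P^w\rangle$. This subgroup contains both $N'_{\mu e_{-0}}$ and its opposite family, and via the $\SL_3(K)$-encoding from Lemma~\ref{lemma:simplicity_generation} it contains $\tau$. It also contains $L$ and $L^w$, which together with the $N_{\mu e_i}$ families generate $S$ as in Theorem~\ref{theorem:7_type3_stabiliser}. Hence $M\supseteq\langle P,S\rangle=G$.

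The main obstacle is handling the non-opposite orbits (a) and (b) uniformly, i.e.\ proving that from any $\langle y\rangle\ne\langle x\rangle$ in $\mathcal P$ one reaches an opposite point inside $\langle x\rangle^{\langle P,g\rangle}$. I would try first to show that the $P$-orbit of any such $\langle y\rangle$ contains two points whose mutual relation is opposite. Then $\langle x\rangle^{g}$ and $\langle x\rangle^{hg}$ with $h\in P$ are opposite, and conjugating gives an opposite pair containing $\langle x\rangle$. This reduces to an explicit co\"ordinate check with $H$ acting on representatives of (a) and (b), such as $(0,0,0\mid e_{-0},0,0)$ and $(0,0,0\mid e_1,e_{-1},0)$-type vectors adjusted to be white and isotropic.
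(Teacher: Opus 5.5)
Your maximal-subgroup framing is equivalent to the paper's block argument. Your endgame (get $\langle x^*\rangle$ into $\langle x\rangle^M$, then use $\langle P,S\rangle=G$) matches the paper's first case. The orbit analysis underneath it is wrong, however. The $P$-invariant that singles out the orbit of $\langle x^*\rangle$ is Hermitean non-orthogonality, $\B_{\H}(x,y)\neq0$ (equivalently $C_{-0}\neq0$), not $y\notin U_x$. For example, $x^{\tau}=(0,0,0\mid e_0,0,0)$ spans a type-$1$ point outside $U_x$, since $e_0e_0\neq0$. Yet $\B_{\H}(x,x^{\tau})=0$, whereas $\B_{\H}(x,x^*)=1$. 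As $P$ fixes $\langle x\rangle$ and preserves $\B_{\H}$, these two points of your case (c) lie in different $P$-orbits. So your claimed transitivity of $P$ on case (c) is false. What does hold, and what the paper proves first with $H$ and then with $Z$, is transitivity on type-$1$ points not orthogonal to $x$.

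More seriously, the step you call the main obstacle is not resolved, and the tactics you suggest cannot work on the radical $R$. The space $R=U_x\cap U_x^{\perp}$ is $P$-invariant and totally isotropic for $\B_{\H}$. Hence every $P$-orbit of type-$1$ points inside $R$ consists of mutually orthogonal points. It therefore contains no non-orthogonal pair, which is what you would need to reach $\langle x^*\rangle$.

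Your one-step fix also fails. Take $g_0=\tau P_{e_1+e_{-1}}$ and $y=x^{g_0}=(0,0,0\mid e_1,0,0)\in R$. One checks that $x\in R^{g_0}$, because $\bar u(-e_{-0})\bar u=e_0$ for $u=e_1+e_{-1}$. So every element of $G_{\langle y\rangle}$ keeps $\langle x\rangle$ inside $R^{g_0}\subseteq U_y$, and no conjugated root element moves $\langle x\rangle$ off $U_y$.

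The missing idea is a global argument on the whole block $\mathcal D=\langle x\rangle^M$ in the totally orthogonal case, as the paper gives.
\begin{enumerate}[(i)]
\item Using $N'_{\mu e_j},N''_{\mu e_j},N_{\mu e_j}\in P$ and polarising in $\mu$ (with $\Tr_{K/F}$ onto), every representative of a point of $\mathcal D$ lies in $R$.
\item The $K$-span $U$ of $\mathcal D$ is then a $P$-invariant subspace of $R$, totally singular for the quadratic form $q(A,B)$ that the whiteness equations induce on $R$.
\item Splitting $N_{\mu e_j}$ ($j\neq\pm0$) into its $\mu$- and $\mu^{\s}$-parts gives nilpotent maps on $U/Kx$. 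Their composites pair with $(A,B)$ to give $-A_{-0}^2,-A_1^2,-B_{-0}^2,-B_{-1}^2$ and cyclic variants, forcing $U=Kx$.
\end{enumerate}
Without such a step, nothing in your proposal rules out a non-trivial block inside $R$.

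Separately, your claim that $L$, $L^w$ and the $N_{\mu e_i}$ generate $S$ is only asserted. It is cleaner to note that $M$ contains the stabilisers in $S$ of the distinct singular points $\langle x\rangle,\langle x^*\rangle$ of $V$. These generate $S$ because $S$ acts primitively on singular points.
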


\begin{proof}
    First, $H$ is transitive on the points of $\mathcal P$ not orthogonal
    to $x$. Indeed, normalise $C_{-0}=1$. Formula~\eqref{eq:5_nx_image}
    shows that the eight families generating $H$ independently clear
    \[
        B_0,B_1,B_\omega,B_{\ombb},
        A_0,A_{-1},A_{-\omega},A_{-\ombb},
    \]
    without changing $C_{-0}$. We now have $A\in A_4$, $B\in B_4$.
    The $A_4$-component of $CA=b\bar B$ gives $A=0$, and the
    $B_4$-component of $BC=a\bar A$ gives $B=0$. Then $AB=c\bar C$
    gives $c=0$. By the proof of Theorem~\ref{theorem:6_three_orbits}, our normalised
    type-$1$ vector belongs to
    \[
        V=\{(a,-a^\s,0\mid0,0,C):a\in K,\ C\in\OO_F\}.
    \]
    On $V$, put $Q=ab-\NN(C)$ and let $f$ be its polar form.
    The group $Z$ is transitive on singular vectors with $C_{-0}=1$,
    by the matrices in \cite[Lemma~2.1]{BSW1}. Thus it sends our vector
    to $x^*$, proving the assertion.

    Let $\mathcal D$ be a block containing $\langle x\rangle$, so that
    $P$ preserves $\mathcal D$.
    If it contains two non-orthogonal points, move the first to
    $\langle x\rangle$. The image block meets $\mathcal D$, so is
    $\mathcal D$ itself. The preceding transitivity then puts
    $\langle x^*\rangle$ in $\mathcal D$.
    Now $S$ acts primitively on the singular $F$-points of $V$
    \cite[Theorem~11.30]{Taylor}. The block equivalence relation
    identifies the distinct points $\langle x\rangle,\langle x^*\rangle$,
    so identifies that entire $S$-orbit. Hence $S$ preserves $\mathcal D$.
    Since $P$ also preserves it and $\langle P,S\rangle=G$,
    we obtain $\mathcal D=\mathcal P$.

    Suppose instead that all points of $\mathcal D$ are mutually orthogonal.
    We show directly that their representatives lie in $R$.
    Let $Y=(a,b,c\mid A,B,C)$ represent a point of $\mathcal D$.
    For a basis octonion $e_j\in A_4$, put $d=\Tr(A\overline{e_j})$.
    Since $N'_{\mu e_j}\in P$, orthogonality and
    \eqref{eq:5_nx_image} give, for some $\alpha,\beta\in K$ independent of $\mu$,
    \[
        0=\B_{\H}(Y,Y^{N'_{\mu e_j}})
          =\mu\alpha+\mu^\s\beta-\mu\mu^\s dd^\s.
    \]
    Here the quadratic term follows from $e_j\bar A e_j=d e_j$.
    Subtracting the instances for $\mu$ and $\nu$ from that for
    $\mu+\nu$ gives
    \[
        \Tr_{K/F}(\mu\nu^\s)\,dd^\s=0.
    \]
    The trace map is onto, so $d=0$. This gives $A\in A_4$.
    The identical calculation with $N''_{\mu e_j}$, for $e_j\in B_4$,
    gives $B\in B_4$. Using $N_{\mu e_j}\in P$ for $j\neq\pm0$
    gives $C_j=0$ for all $j\neq\pm0$.
    Also $C_{-0}=0$, since $Y$ is orthogonal to $x$.
    Applying $N_{\mu e_j}$ once more, its new $C_j$-coefficient
    is $a\mu-b\mu^\s$, which must vanish for every $\mu$.
    Taking $\mu=1$ and then $\mu\notin F$ gives $a=b=0$.
    Finally $\H(Y)=cc^\s=0$ gives $c=0$, so $Y\in R$.

    Put $U=\langle\mathcal D\rangle_K\subseteq R$.
    Since $P$ preserves $\mathcal D$, the space $U$ is $P$-invariant.
    On $R$, the whiteness equations reduce to
    \[
        q(A,B):=A_{-0}B_{-0}-A_1B_{-1}
                 -A_\omega B_{-\omega}-A_{\ombb}B_{-\ombb}=0.
    \]
    Adding any multiple of $x$ to a white vector in $U$ gives a white
    vector or zero. The block stabiliser preserves $U$ and acts
    transitively on $\mathcal D$, so the same assertion holds with
    any representative of a point of $\mathcal D$ in place of $x$.
    Thus the polar form of $q$ vanishes on every pair of such
    representatives. Since they span $U$, we have $q|_U=0$.

    Omitting the $Ke_0$-co\"ordinate identifies $U/Kx$ with a subspace
    of $A_4\oplus B_4$. On this space, $N_{\mu e_j}$, for $j\neq\pm0$,
    acts as $\II+\mu r_j+\mu^\s t_j$, where
    \[
        r_j(A,B)=(\overline{e_j}\bar B,0),\qquad
        t_j(A,B)=(0,-\bar A\overline{e_j}).
    \]
    Taking $\mu=1$ and $\mu\notin F$ shows that $U/Kx$ is invariant
    under $r_j,t_j$ separately. The multiplication table gives
    \[
    \begin{array}{c|c}
        \text{map}&\text{image of }(A,B)\\ \hline
        t_{-\ombb}\circ r_{-\omega}\circ t_{-1}
            &(0,-A_{-0}e_{-0})\\
        t_{\ombb}\circ r_\omega\circ t_{-1}
            &(0,A_1e_{-1})\\
        r_{\ombb}\circ t_\omega\circ r_1
            &(-B_{-0}e_{-0},0)\\
        r_{-\ombb}\circ t_{-\omega}\circ r_1
            &(B_{-1}e_1,0).
    \end{array}
    \]
    Pairing $(A,B)$ with these images under the polar form of $q$
    gives respectively $-A_{-0}^2,-A_1^2,-B_{-0}^2,-B_{-1}^2$.
    They must all be zero. Cycling $1,\omega,\ombb$ gives the other
    four coefficients, so $A=B=0$. Hence $U=Kx$, and $\mathcal D$
    is a singleton. All these calculations remain valid in characteristic $2$.
\end{proof}

\begin{lemma}
    \label{lemma:simplicity_kernel}
    The kernel of the action of $G$ on $\mathcal P$ is the scalar subgroup
    \[
        Z_{\mathrm{sc}}=\{\zeta\II_{27}:\zeta^3=1,\ \zeta\zeta^\s=1\}.
    \]
\end{lemma}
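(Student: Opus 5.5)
We prove the two inclusions separately.

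For the easy inclusion, take $\zeta\in K$ with $\zeta^3=1$ and $\zeta\zeta^{\s}=1$. Then $\zeta\II_{27}$ multiplies $\fdet$ by $\zeta^3=1$ and $\H$ by $\zeta\zeta^{\s}=1$, so it lies in $G$. It fixes every $1$-space, so it acts trivially on $\mathcal P$.

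For the converse, let $g\in G$ fix every point of $\mathcal P$. The plan is to show that $g$ is scalar, using that the type-$1$ white vectors span $\J$. We would argue this as follows. The kernel is normal in $G$, so its span-closure argument is $G$-invariant. The $K$-span $\langle\mathcal P\rangle$ is a non-zero $G$-invariant subspace. It contains $x$ and $x^*$, and also all images of $x$ under $\tau$, the $N$-elements and the $P_u$. Explicitly, it contains $(0,0,0\mid0,0,e_i)$ for every $i$: these are singular $F$-rational vectors of $V$, and hence type~$1$. Applying powers of $\tau$ gives all the octonion co\"ordinate vectors in each of the $A$-, $B$- and $C$-positions. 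Applying $N_{e_1}$ to $(0,0,0\mid0,0,e_{-1})$ produces a vector with non-zero scalar entries, and subtracting known vectors yields the scalar basis vectors. Hence $\langle\mathcal P\rangle=\J$.

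Since $g$ fixes each point of $\mathcal P$, each type-$1$ vector is an eigenvector of $g$. We now upgrade this to $g$ being scalar, using the standard argument that the eigenvalues must all agree. Take two type-$1$ vectors $X,Y$ whose sum $X+Y$ is again a type-$1$ white vector; then the eigenvalues of $X$ and $Y$ coincide. Concretely, $(0,0,0\mid0,0,e_0)$ and $(0,0,0\mid0,0,e_1)$ have sum $e_0+e_1$ in the $C$-slot, which is singular and $F$-rational. Hence it lies in the type-$1$ orbit by Theorem~\ref{theorem:6_three_orbits}, since it is isotropic and in the radical. Such pairs connect all the basis vectors above, including the scalar ones. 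For instance, $(1,0,0\mid0,0,e_1)$ is white, $F$-rational and isotropic? We would check instead vectors such as $(1,0,0\mid0,0,e_1)$ in $V$-type form $(a,-a^{\s},\dots)$, or use $\tau$-images. Connectivity of this ``sum graph'' then forces $g=\zeta\II_{27}$.

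Finally, $g\in\SE_6(K)$ preserves $\fdet$, a cubic form, so $\zeta^3=1$. Since $g$ preserves $\H$, we also get $\zeta\zeta^{\s}=1$.

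The main obstacle is the connectivity step: exhibiting enough pairs of type-$1$ vectors whose sums are again type-$1$ to link every basis direction, in particular the scalar co\"ordinates to the octonion ones. This must be done without leaving the type-$1$ orbit, because $\H$-isotropy and membership in the radical constrain which sums are allowed. If the direct choice fails, we would instead use that $g$ commutes with $H$ (since $g$ lies in $G_x$ and acts trivially on the orbit). We would then apply $N'_{\mu e_i}$ to an eigenvector and compare eigenvalues of the image vector, which is again type~$1$.
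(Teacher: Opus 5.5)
There is a genuine gap in the converse, at the connectivity step you flag yourself. The easy inclusion and the final deduction of $\zeta^3=1$, $\zeta\zeta^{\s}=1$ are fine.

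Knowing that type-$1$ vectors span $\J$ and that each one is an eigenvector of $g$ proves nothing on its own: a diagonal matrix fixes every co\"ordinate point. You need a spanning family of type-$1$ vectors that is connected under the relation ``the sum is again type~$1$'', and you never produce one. Worse, your spanning set is not made of known eigenvectors. The scalar basis vectors such as $(1,0,0\mid0,0,0)$ have $\H=1$, so they are of type~$3$. Your proposed bridge $(1,0,0\mid0,0,e_1)$ also has $\H=1+\Tr(e_1\overline{e_1})=1$, so it is not in $\mathcal P$ either. The fallback does not work as stated. Lying in the kernel of the action does not make $g$ commute with $H$, and $g$ need only fix $\langle x\rangle$, not $x$.

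The missing idea is to work inside the $F$-space $V=\{(a,-a^{\s},0\mid0,0,C):a\in K,\ C\in\OO_F\}$. By the orbit theorem, \emph{every} non-zero $Q$-singular vector of $V$ represents a type-$1$ point, including vectors with non-zero scalar part such as $(1,-1,0\mid0,0,e_0-e_{-0})$. Connectivity then comes for free:
\begin{enumerate}[(i)]
\item Choose orthogonal hyperbolic pairs $u_1,v_1$ and $u_2,v_2$ in $V$. The singular vectors $u_1+u_2$, $u_1+v_2$, $v_1+u_2$ force a common eigenvalue $\zeta$ on all four.
\item For $w\in V$ orthogonal to both pairs, use the singular vectors $u_i+w-Q(w)v_i$ for $i=1,2$. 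Comparing coefficients of the independent vectors $u_1,v_1,u_2,v_2,w$ gives $w^g=\zeta w$.
\item Hence $g$ is scalar on $V\otimes_FK=\J_{10}^{abC}$. The same argument applies to the $\tau$-images $\J_{10}^{bcA}$ and $\J_{10}^{caB}$.
\item These three spaces pairwise meet non-trivially, so the scalars agree, and they span $\J$. This gives $g=\zeta\II_{27}$.
\end{enumerate}
This is how the paper proves it.
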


\begin{proof}
    Let $g$ fix every point of $\mathcal P$.
    In $V$ choose two orthogonal hyperbolic pairs $u_1,v_1$ and $u_2,v_2$,
    with $f(u_i,v_i)=1$. Their points are type $1$.
    The points represented by $u_1+u_2,u_1+v_2,v_1+u_2$ force a common
    scalar multiplier $\zeta$ on all four basis vectors.
    For $w$ orthogonal to both pairs, the singular vectors
    $u_i+w-Q(w)v_i$, for $i=1,2$, then give $w^g=\zeta w$.
    Thus $g$ is scalar on $V\otimes_F K=\J_{10}^{abC}$.
    The same argument on its two cyclic images gives scalars on
    $\J_{10}^{bcA}$ and $\J_{10}^{caB}$.
    Their non-zero intersections force the scalars to agree, and the
    three spaces span $\J$. Hence $g=\zeta\II_{27}$.
    Conversely, such a scalar fixes every point, and belongs to $G$
    precisely when $\zeta^3=1$ and $\zeta\zeta^\s=1$.
\end{proof}

\begin{theorem}
    \label{theorem:simplicity}
    Under hypothesis~\eqref{eq:1_norm_surjective}, the group
    ${}^2\E_{6,K}(F)$ is simple.
\end{theorem}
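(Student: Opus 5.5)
The plan is to apply Iwasawa's lemma to the action of $\bar G=G/Z_{\mathrm{sc}}={}^2\E_{6,K}(F)$ on the orbit $\mathcal P$ of type-$1$ white points. First I identify the centre. By Lemma~\ref{lemma:simplicity_kernel}, $Z_{\mathrm{sc}}$ is exactly the kernel of the action of $G$ on $\mathcal P$. Since $G$ is perfect (Lemma~\ref{lemma:simplicity_generation}) and acts primitively on $\mathcal P$ (Lemma~\ref{lemma:simplicity_primitive}), any central element acts on $\mathcal P$ by a permutation commuting with a transitive group. Hence it is semiregular, and its fixed-point structure is $G$-invariant.

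A cleaner route is to observe that $Z(G)$ fixes $\langle x\rangle$. Indeed, $Z(G)$ normalises $P$, and primitivity together with $P\neq G$ gives $P$ self-normalising. So $Z(G)\leqslant P$ fixes $\langle x\rangle$, and by transitivity it fixes every point, so $Z(G)\leqslant Z_{\mathrm{sc}}$. The reverse inclusion is clear. Hence ${}^2\E_{6,K}(F)=G/Z_{\mathrm{sc}}$ acts faithfully on $\mathcal P$. The remaining conditions of Iwasawa's lemma pass to the quotient. Perfection passes because a quotient of a perfect group is perfect. Primitivity passes because the kernel acts trivially.

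For the abelian normal subgroup I take the image of $Z=[H,H]\cong(F,+)^8$ in $\bar P$. It is abelian, and by the computation with the $D_t$ preceding Lemma~\ref{lemma:simplicity_generation} it is normal in $P=G_{\langle x\rangle}$. Lemma~\ref{lemma:simplicity_generation} states that the $G$-conjugates of $Z$ generate $G$. Hence the conjugates of the image generate $\bar G$.

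Iwasawa's lemma then gives that any normal subgroup $N\ne 1$ of $\bar G$ is transitive, so $\bar G=N\bar P$. Then $N\bar Z$ is normal and contains every conjugate of $\bar Z$, so it equals $\bar G$. Thus $\bar G/N$ is abelian, and perfection forces $N=\bar G$.

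The one step needing care is the self-normalising claim for $P$. I expect it to follow from primitivity plus the fact that $\mathcal P$ has more than one point. If this is awkward, I would instead note directly that a central element fixing $\langle x\rangle$ fixes every point. It does fix $\langle x\rangle$: it normalises $P$, so it maps the fixed-point set of $P$ on $\mathcal P$ to itself. That fixed set is $\{\langle x\rangle\}$, since $H\leqslant P$ moves every point not orthogonal to $x$, and the orthogonal points are handled by the argument in Lemma~\ref{lemma:simplicity_primitive}.
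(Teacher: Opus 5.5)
Your argument is essentially the paper's: Iwasawa's lemma for $G/Z_{\mathrm{sc}}$ acting on $\mathcal P$, with the image of $Z=[H,H]$ as the abelian normal subgroup of the point stabiliser, using the generation, primitivity and kernel lemmas. The only difference is that you establish $Z(G)=Z_{\mathrm{sc}}$ first via $N_G(P)=P$, which needs $P$ to be non-normal in $G$, not merely $P\neq G$ (this holds, since a normal point stabiliser would lie in the kernel $Z_{\mathrm{sc}}$, whereas $P\geqslant H$); the paper instead gets $Z(G)=Z_{\mathrm{sc}}$ afterwards, because the simple perfect quotient $G/Z_{\mathrm{sc}}$ has trivial centre.
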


\begin{proof}
    The preceding lemmas show that $G/Z_{\mathrm{sc}}$ is perfect and
    acts faithfully and primitively on $\mathcal P$.
    The image of $Z\lhd P$ is abelian, and its conjugates generate this
    quotient. Iwasawa's lemma therefore makes $G/Z_{\mathrm{sc}}$ simple.
    Being simple, non-trivial and perfect, this quotient has trivial centre.
    Since $Z_{\mathrm{sc}}\leqslant Z(G)$, it follows that
    \begin{equation}
        \label{eq:simplicity_centre}
        Z(G)=Z_{\mathrm{sc}}.
    \end{equation}
    Thus the simple quotient is precisely ${}^2\E_{6,K}(F)$.
\end{proof}

\section{The case of a finite field}

Let $F=\Fq$, $K=\Fqs$, and $\s:t\mapsto t^q$, where $q$ is a prime
power.  The norm map $t\mapsto t^{q+1}$ maps $K^{\times}$ onto
$F^{\times}$, so hypothesis~\eqref{eq:1_norm_surjective} holds.
Write
\[
    G={}^2\SE_6(q)={}^2\SE_{6,K}(F),\qquad
    {}^2\E_6(q)=G/Z(G).
\]
Let $o_i$ be the length of the orbit of $\langle X_i\rangle$, and let
$s_i$ be the order of its stabiliser in $G$, for $i=1,2,3$.  Here we use
the representatives from Section~7, in particular
$X_3=(0,0,1\mid0,0,0)$.  By Theorem~\ref{theorem:6_three_orbits}, these
are all the white-point orbits.  Substituting $q^2$ for $q$ in
\cite[Corollary~8.2]{BSW1}, the total number of white points is
\begin{equation}
    \label{eq:8_white_points}
    o_1+o_2+o_3=N,\qquad
    N=\frac{(q^{24}-1)(q^{18}-1)}{(q^8-1)(q^2-1)}.
\end{equation}

We first pass from vector stabilisers to point stabilisers.  The elements
$D_t$ of~\eqref{eq:simplicity_diagonal} now have
\[
    u_t=t e_0+t^{-q}e_{-0},\qquad
    D_t=\diag(1,u_t,u_t^{-1}).
\]
Their action, already calculated in Section~\ref{section:simplicity}, gives
\begin{equation}
    \label{eq:8_point_multipliers}
    X_1^{D_t}=tX_1,\qquad
    X_2^{D_t}=tX_2,\qquad
    X_3^{D_t}=t^{q-1}X_3.
\end{equation}
Consequently, for $i=1,2$, the orbit of $X_i$ under its point stabiliser
consists of all $q^2-1$ non-zero vectors on $\langle X_i\rangle$.
For $i=3$, preservation of $\H(X_3)=1$ permits precisely the multipliers
of norm $1$, and all $q+1$ of these occur by
\eqref{eq:8_point_multipliers}.  The point-stabiliser orders are
therefore $q^2-1$, $q^2-1$, and $q+1$ times the respective
vector-stabiliser orders.

The classical group order formulae (see \cite[pp.~118, 165]{Taylor}) give
\begin{align*}
    |\Spin^-_{2n}(q)|
        &=q^{n(n-1)}(q^n+1)\prod_{j=1}^{n-1}(q^{2j}-1)
          \qquad(n=4,5),\\
    |\SU_5(q)|
        &=q^{10}(q^2-1)(q^3+1)(q^4-1)(q^5+1).
\end{align*}
For odd $q$, the kernel of the natural action of the spin group has
order $2$, which cancels the index $2$ of $\Omega^-_{2n}(q)$ in
$\SO^-_{2n}(q)$; for even $q$ both factors are $1$.  Thus no extra
factor is needed in the displayed spin-group order.  Theorems
\ref{theorem:7_type1_stabiliser}, \ref{theorem:7_type2_stabiliser}, and
\ref{theorem:7_type3_stabiliser} now give
\begin{equation}
    \label{eq:8_point_stabiliser_orders}
    \begin{aligned}
        s_1&=(q^2-1)q^{24}|\Spin^-_8(q)|
             =q^{36}(q^2-1)^2(q^6-1)(q^8-1),\\
        s_2&=(q^2-1)q^{21}|\SU_5(q)|
             =q^{31}(q^2-1)^2(q^3+1)(q^4-1)(q^5+1),\\
        s_3&=(q+1)|\Spin^-_{10}(q)|\\
             &\phantom{{}={}}=q^{20}(q+1)(q^2-1)(q^4-1)(q^6-1)(q^8-1)(q^5+1).
    \end{aligned}
\end{equation}

By the orbit--stabiliser formula,
\[
    o_1s_1=o_2s_2=o_3s_3=|G|.
\]
Together with \eqref{eq:8_white_points}, this gives
\begin{equation}
    \label{eq:8_order_from_orbits}
    |G|\left(\frac1{s_1}+\frac1{s_2}+\frac1{s_3}\right)=N,
    \qquad
    o_i=\frac{N}{s_i(1/s_1+1/s_2+1/s_3)}.
\end{equation}
Solving these equations gives
\begin{equation}
    \label{eq:8_orbit_lengths}
    \begin{aligned}
        o_1&=\frac{(q^{12}-1)(q^9+1)(q^5+1)}{q^2-1},\\
        o_2&=\frac{q^5(q^{12}-1)(q^9+1)(q^4+1)(q^3-1)}{q^2-1},\\
        o_3&=\frac{q^{16}(q^{12}-1)(q^9+1)}{(q^4-1)(q+1)}.
    \end{aligned}
\end{equation}

Finally, Lemma~\ref{lemma:simplicity_kernel} and
\eqref{eq:simplicity_centre} give
\[
    Z(G)=\{\zeta\II_{27}:\zeta^3=\zeta^{q+1}=1\},\qquad
    |Z(G)|=\gcd(3,q+1).
\]
Using $|G|=o_1s_1$ and $|{}^2\E_6(q)|=|G|/|Z(G)|$, we have proved:

\begin{theorem}
    \label{theorem:8_finite_orders}
    For every prime power $q$,
    \begin{align*}
        |{}^2\SE_6(q)|
            &=q^{36}(q^2-1)(q^5+1)(q^6-1)(q^8-1)(q^9+1)(q^{12}-1),\\
        |{}^2\E_6(q)|
            &=\frac{q^{36}}{\gcd(3,q+1)}
              (q^2-1)(q^5+1)(q^6-1)(q^8-1)(q^9+1)(q^{12}-1).
    \end{align*}
\end{theorem}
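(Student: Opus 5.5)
The plan is to compute $|G|=o_1s_1$ directly from the orbit--stabiliser relations already set up, and then divide by $|Z(G)|$. From \eqref{eq:8_order_from_orbits} we have $|G|=N/(1/s_1+1/s_2+1/s_3)$, so the first step is to put the three reciprocals over a common denominator. I will factor out the largest common part of $s_1,s_2,s_3$, namely
\[
C=q^{20}(q+1)(q^2-1)(q^4-1)(q^8-1),
\]
after checking that each $s_i$ is divisible by $C$. Writing $s_i=C\,r_i$ gives
\[
r_1=q^{16}(q-1)(q^4+1)(q^6-1),\quad
r_2=\frac{q^{11}(q-1)(q^2-1)(q^3+1)(q^5+1)}{q^8-1}\cdot\frac{q^8-1}{1},\quad
r_3=(q^6-1)(q^5+1),
\]
where $r_2$ is handled by simplifying $(q^2-1)(q^4-1)/((q+1)(q^4-1)(q^8-1))$ honestly rather than forcing a polynomial. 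It is more convenient to use the lengths $o_i$ stated in \eqref{eq:8_orbit_lengths} as a candidate and verify two things: the three products $o_is_i$ agree, and $o_1+o_2+o_3=N$. Because both relations hold, the solution of \eqref{eq:8_order_from_orbits} is unique and these are the orbit lengths.

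The second step is the check $o_1s_1=o_2s_2=o_3s_3$. This is straightforward factor bookkeeping using $q^{12}-1=(q^6-1)(q^6+1)$, $q^8-1=(q^4-1)(q^4+1)$, $q^6-1=(q^3-1)(q^3+1)$ and $q^4-1=(q^2-1)(q^2+1)$. Each product should reduce to
\[
q^{36}(q^2-1)(q^5+1)(q^6-1)(q^8-1)(q^9+1)(q^{12}-1).
\]
For example, $o_1s_1=q^{36}(q^2-1)(q^5+1)(q^6-1)(q^8-1)(q^9+1)(q^{12}-1)$ is immediate once the $(q^2-1)$ in the denominator of $o_1$ cancels one factor of $(q^2-1)^2$ in $s_1$. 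The third step is the identity $o_1+o_2+o_3=N$. After dividing by the common factor $(q^{12}-1)(q^9+1)$ and using $N=(q^{12}-1)(q^{12}+1)(q^9+1)(q^9-1)/((q^8-1)(q^2-1))$, this becomes a one-variable polynomial identity. I will clear the denominator $(q^8-1)(q^2-1)$ and verify it by expanding. The identity to check is
\[
(q^5+1)(q^8-1)+q^5(q^4+1)(q^3-1)(q^8-1)+\frac{q^{16}(q^8-1)(q^2-1)}{(q^4-1)(q+1)}=(q^{12}+1)(q^9-1),
\]
and the last term simplifies to $q^{16}(q^4+1)(q-1)$. I expect this expansion to be the main obstacle, mainly because it is easy to make arithmetic slips. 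I would do it by comparing the two sides monomial by monomial. The left side expands to $q^{13}-q^5+q^8-1$, plus $q^{20}-q^{17}+q^{16}-q^{13}-q^{12}+q^9-q^8+q^5$, plus $q^{21}-q^{20}+q^{17}-q^{16}$. The total telescopes to $q^{21}+q^9-q^{12}-1$, which equals the right side.

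Finally, I will take $|G|=o_1s_1$ from the second step. From Lemma~\ref{lemma:simplicity_kernel} and \eqref{eq:simplicity_centre}, $Z(G)$ consists of the scalars $\zeta$ with $\zeta^3=1=\zeta^{q+1}$. In the cyclic group $\Fqs^{\times}$ these form the subgroup of order $\gcd(3,q+1)$, since $\gcd(3,q+1)$ divides $q^2-1$. Dividing $|G|$ by this gives $|{}^2\E_6(q)|$.
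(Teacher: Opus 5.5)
Your proposal follows the paper's argument: orbit--stabiliser with $s_1,s_2,s_3$ and $N$, then division by $|Z(G)|=\gcd(3,q+1)$. Your explicit check that the stated $o_i$ give equal products $o_is_i$ and sum to $N$ (the expansion to $q^{21}-q^{12}+q^9-1$ is correct) supplies the computation the paper leaves as ``solving these equations''.

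The only blemish is the abandoned common-factor aside. $C$ does not divide $s_1$ or $s_2$; for example, $s_1/C=q^{16}(q-1)(q^6-1)/(q^4-1)$. So your $r_1$ and $r_2$ are wrong. Nothing later depends on them, and that paragraph should simply be deleted.
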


\section{Conclusions}
\label{section:conclusions}

Hypothesis~\eqref{eq:1_norm_surjective} is essential to the three-orbit
statement: it is what allows every non-zero Hermitean value to be
normalised to $1$. We now drop this hypothesis, retaining only that
$K/F$ is a quadratic Galois extension. The same orbit arguments give
the following more general result.

\begin{theorem}
    \label{theorem:conclusions_norm_orbits}
    For an arbitrary quadratic Galois extension $K/F$, the group
    $G={}^2\SE_{6,K}(F)$ has precisely two orbits on isotropic white
    points, as in Theorem~\ref{theorem:6_three_orbits}. Its orbits on
    non-isotropic white points are in one-to-one correspondence with
    \[
        F^\times/\NN_{K/F}(K^\times).
    \]
    The orbit of $\langle X\rangle$ corresponds to the class of $\H(X)$.
\end{theorem}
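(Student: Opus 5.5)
The plan is to rerun the proof of Theorem~\ref{theorem:6_three_orbits} and check where hypothesis~\eqref{eq:1_norm_surjective} actually enters. Along the way, every citation of Section~3 that silently assumes it has to be re-examined.

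\textbf{Step 1: reduction to $\J_{10}^{abC}$.} Lemma~\ref{lemma:6_reduce_J10} uses only the elements $\tau$, $\delta$, $P_u$, $N_x$, $N'_x$ and $N''_x$, together with rescaling of the representative. None of these needs norm surjectivity, so every white point is still conjugate to a point of $V=\J_{10}^{abC}$.

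\textbf{Step 2: the orthogonal subgroup on $V$.} I would reuse the $(Q,B)$-subspace $U$ and the identification of the group induced by the $P_u$ and $N_{\mu e_i}$ with $\Omega(U,Q)$. That argument uses Siegel transformations and \cite[Theorem~11.46]{Taylor}, not norm surjectivity. Proposition~\ref{prop:3_2forms} is stated in Section~3 under the standing hypothesis, but its proof is formal and uses only non-degeneracy, so it holds verbatim. One point needs checking: Section~3 cites \cite{Asch1}, which requires that $-1$ be a norm. I will avoid that requirement by noting that the proofs given for Proposition~\ref{prop:3_2forms_orbits} and Lemma~\ref{lemma:3_2forms_nonisotropic} are self-contained Witt-theorem arguments.

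\textbf{Step 3: isotropic points.} The proof of Proposition~\ref{prop:3_2forms_orbits} never uses surjectivity. The device of moving Witt isometries into $\Omega(U,Q)$ via a complementary hyperbolic plane $L$ also works, since $U$ still has Witt index $4$. Hence the isotropic white points form exactly two orbits, with representatives $X_1$ and $X_2$. They remain distinct because of the radical invariants.

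\textbf{Step 4: non-isotropic points.} For the invariant, note that if $\langle X\rangle^g=\langle Y\rangle$ then $X^g=tY$, so $\H(X)=tt^{\s}\H(Y)$. Thus the class of $\H(X)$ in $F^\times/\NN_{K/F}(K^\times)$ is well defined on points and constant on orbits. The main content is injectivity, which is the step I expect to be the crux. Suppose $\H(X)$ and $\H(Y)$ lie in the same class. Then I rescale $Y$ by some $t$ so that $\H(Y)=\H(X)$. The second half of the proof of Lemma~\ref{lemma:3_2forms_nonisotropic}, which only needs $B(x,x)=B(y,y)$, then gives an isometry of $U$ sending $x$ to $y$. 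I still have to confirm that this isometry can be taken in $\Omega(U,Q)$. The prescribed plane $\langle u_1,u_2\rangle$ is non-degenerate of dimension $2$ inside a $10$-space of Witt index $4$, so its orthogonal complement has Witt index at least $3$ and contains a hyperbolic plane $L$. The correction by $\GO(L,Q)$ from the proof of Theorem~\ref{theorem:6_three_orbits} therefore applies.

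\textbf{Step 5: surjectivity.} Given $\gamma\in F^\times$, I need a white vector with $\H=\gamma$, and I expect this to be easy. Take $X=(a,0,0\mid0,0,0)$: it is white by Lemma~\ref{lemma:2_whiteness} and has $\H(X)=aa^{\s}$, which only gives norms. So I would instead use $X=(a,b,0\mid0,0,C)$ with $ab=C\ovC$. For example, take $C=\gamma e_0$ and $a=b=\gamma$. Then $\H(X)=2\gamma^2+\gamma^2=3\gamma^2$, which is again only a square. So I would work inside $U\otimes K$ and look for a vector $x=u_1+\lambda u_2$ with $u_1,u_2$ spanning a plane in $U$ satisfying $Q(u_1)=\lambda\lambda^{\s}Q(u_2)$ and $f(u_1,u_2)=-(\lambda+\lambda^{\s})Q(u_2)$. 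By the computation in Lemma~\ref{lemma:3_2forms_nonisotropic} this gives $\H(x)=-d^2Q(u_2)$. Since $U$ is isotropic and hence universal, $Q(u_2)$ can be any prescribed value, namely $-\gamma/d^2$ (which lies in $F$ because $d^2\in F$). I then need $u_1$ with the prescribed $Q(u_1)$ and $f(u_1,u_2)$; this exists because $u_2^\perp$ is isotropic and hence universal, which gives enough freedom to solve both conditions. Any such $x\in V$ with $Q(x)=0$ is white, being singular in $V$. This yields every class, and completes the bijection.
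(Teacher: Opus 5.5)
Your Steps~1--4 are the paper's own argument: the reduction by Lemma~\ref{lemma:6_reduce_J10}; the observation that neither the $\Omega(U,Q)$ construction, nor Proposition~\ref{prop:3_2forms_orbits}, nor the $\GO(L,Q)$-correction uses \eqref{eq:1_norm_surjective}; invariance of the norm class; and rescaling to $\H(X)=\H(Y)$ before rerunning Lemma~\ref{lemma:3_2forms_nonisotropic}. The genuine difference is Step~5, and your route there is also correct.

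The paper simply exhibits $C_r=e_0+\lambda e_1-\frac{r\lambda}{d^2}e_{-1}+\frac{r\lambda^2}{d^2}e_{-0}$ and checks $\NN(C_r)=0$ and $h(C_r)=r$ in co\"ordinates, uniformly in all characteristics. You instead run the computation of Lemma~\ref{lemma:3_2forms_nonisotropic} backwards. Your two conditions on $u_1,u_2$ are exactly the vanishing of the two $F$-components of $Q(u_1+\lambda u_2)$, so they are equivalent to whiteness in $\J_{10}^{abC}$. Then $\H(x)=-d^2Q(u_2)$, and universality of $Q_U$ reaches every class. This is more conceptual: surjectivity becomes the same identity as the invariant read in reverse, and no vector has to be guessed. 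The price is some Witt-index bookkeeping that the explicit vector avoids.

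Two details of that bookkeeping should be fixed.

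\emph{Characteristic $2$.} Here $f(u_2,u_2)=0$, so $u_2\in u_2^\perp$. The decomposition $u_1=su_2+w$ with $w\perp u_2$ is then unavailable; it does work when $2\neq0$, with $s=-(\lambda+\lambda^{\s})/2$ and $Q(w)=\gamma/4$. Instead, take $y$ with $f(y,u_2)=dQ(u_2)\neq0$, so that $\langle u_2,y\rangle$ is non-degenerate. Then adjust $y$ by a vector $w$ of a hyperbolic plane in $\langle u_2,y\rangle^\perp$. Since $Q(y+w)=Q(y)+Q(w)$, this attains the required value $\lambda\lambda^{\s}Q(u_2)$.

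\emph{Witt index in Step~4.} The complement of a non-degenerate plane in a $10$-space of Witt index $4$ has Witt index at least $2$, not $3$. For example, over $\mathbb R$ a negative definite plane in signature $(6,4)$ leaves signature $(6,2)$. Index $2$ still supplies the hyperbolic plane $L$, so nothing breaks.

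Your discarded trial vectors can be deleted. The second is not even white, since $\NN(\gamma e_0)=0\neq\gamma^2$.
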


\begin{proof}
    Since $G$ preserves $\H$ and $\H(tX)=tt^\s\H(X)$, the class of
    $\H(X)$ is independent of the non-zero representative of the point
    and is constant on each non-isotropic orbit.

    Conversely, suppose that $\H(X)$ and $\H(Y)$ lie in the same
    non-zero norm class. The reduction in
    Lemma~\ref{lemma:6_reduce_J10} uses no norm-surjectivity assumption,
    so we may move both points into $\J_{10}^{abC}$.
    Rescaling a representative changes its Hermitean value by a norm,
    so the two reduced points still have values in the same norm class.
    We may therefore rescale one representative to arrange
    $\H(X)=\H(Y)$. The proof of
    Lemma~\ref{lemma:3_2forms_nonisotropic}, after its initial rescaling,
    now supplies the required Witt isometry: equality of the Hermitean
    values makes the two associated $F$-planes isometric.
    Neither this step nor the construction of $\Omega(U,Q)$ and the
    adjustment of the isometry in the proof of
    Theorem~\ref{theorem:6_three_orbits} uses
    \eqref{eq:1_norm_surjective}. Thus an element of $G$ sends
    $\langle X\rangle$ to $\langle Y\rangle$.
    The reduction and the two isotropic orbit arguments in that proof
    likewise do not use \eqref{eq:1_norm_surjective}, so those two
    orbits remain unchanged.

    Finally, every norm class occurs. Fix $\lambda\in K\setminus F$
    and put $d=\lambda-\lambda^\s$, so $d^2\in F^\times$.
    For each $r\in F^\times$, set
    \[
        C_r=e_0+\lambda e_1-\frac{r\lambda}{d^2}e_{-1}
                         +\frac{r\lambda^2}{d^2}e_{-0}.
    \]
    Direct substitution gives
    \[
        \NN(C_r)=0,\qquad
        h(C_r)=\frac{r}{d^2}
          \bigl(\lambda^2+(\lambda^\s)^2-2\lambda\lambda^\s\bigr)=r.
    \]
    Hence $(0,0,0\mid0,0,C_r)$ is a white vector of Hermitean value $r$.
    This calculation also holds in characteristic $2$.
\end{proof}

Thus there are exactly three white-point orbits if and only if
\eqref{eq:1_norm_surjective} holds. If it fails, there are at least two
non-isotropic orbits, and there are exactly two precisely when
\[
    [F^\times:\NN_{K/F}(K^\times)]=2.
\]
For example, let $F=\mathbb R$, $K=\mathbb C$, and let $\s$ be complex
conjugation. The non-zero norms are precisely the positive real numbers.
There are therefore two non-isotropic orbits, distinguished by the sign
of the Hermitean value. Representatives are
\[
    \left\langle(0,0,1\mid0,0,0)\right\rangle,\qquad
    \left\langle(0,0,0\mid0,0,e_0+i e_1-i e_{-1}-e_{-0})\right\rangle,
\]
whose displayed vectors have Hermitean values $1$ and $-4$,
respectively. Together with the two isotropic orbits, this gives
four white-point orbits.

Failure of \eqref{eq:1_norm_surjective} alone does not imply that
there are exactly two non-isotropic orbits. For instance, for
$K=\mathbb Q(i)$ and $F=\mathbb Q$, the classes of $1,-1,3$ are
distinct. Every non-zero norm is positive, and $3$ is not a norm:
clearing denominators in $a^2+b^2=3$ would give a primitive integer
solution of $m^2+n^2=3k^2$, whereas reduction modulo $3$ forces
$3$ to divide $m,n$, and then $k$. The theorem therefore gives at
least three non-isotropic orbits in this example.
The stabiliser and simplicity assertions proved under
\eqref{eq:1_norm_surjective} are not being extended here.

\end{document}